\definecolor{farbe1}{RGB}{184,135,66}   
\definecolor{farbe2}{RGB}{0,128,255}    
\definecolor{farbe3}{RGB}{255,0,0}      
\definecolor{farbe14}{RGB}{0,255,0}      
\definecolor{farbe5}{RGB}{176,131,11}    
\definecolor{farbe11}{RGB}{255,140,0}   
\definecolor{farbe6}{RGB}{0,102,102}    
\definecolor{farbe7}{RGB}{255,0,255}    
\definecolor{farbe8}{RGB}{0,0,128}      
\definecolor{farbe9}{RGB}{128,0,128}    
\definecolor{farbe10}{RGB}{0,0,0} 
\definecolor{farbe12}{RGB}{128,0,0}     
\definecolor{farbe13}{RGB}{0,128,0}     
\definecolor{farbe4}{RGB}{128,128,0}   
\definecolor{farbe15}{RGB}{0,0,255}     
\definecolor{farbe16}{RGB}{75,0,130}    
\definecolor{farbe17}{RGB}{255,105,180} 
\definecolor{gelbgruen}{RGB} {121, 230, 25}
\definecolor{dunkelblau}{RGB}{136, 0, 255}
\definecolor{burgunder}{RGB}{136, 0, 0}
\newtheorem{theorem}{Theorem}
\newtheorem{lemma}[theorem]{Lemma}
\newtheorem{corollary}[theorem]{Corollary}
\newtheorem{proposition}[theorem]{Proposition}
\theoremstyle{definition}
\newtheorem{definition}[theorem]{Definition}
\newtheorem{example}[theorem]{Example}
\theoremstyle{remark}
\newtheorem{observation}[theorem]{Observation}
\newtheorem{remark}[theorem]{Remark}
\newtheorem{notation}[theorem]{Notation}
\DeclareMathOperator{\CAT}{CAT}
\DeclareMathOperator{\Sym}{Sym}
\DeclareMathOperator{\Aut}{Aut}
\DeclareMathOperator{\id}{id}
\DeclareMathOperator{\FF}{\mathbb F}
\DeclareMathOperator{\KK}{\mathbb K}
\DeclareMathOperator{\SL}{SL}
\DeclareMathOperator{\Ad}{Ad}
\DeclareMathOperator{\Ab}{Ab}
\DeclareMathOperator{\Girth}{Girth}
\DeclareMathOperator{\im}{im}
\DeclareMathOperator{\LS}{LS}
\DeclareMathOperator{\Isom}{Isom}
\title{On Chamber-regular $\tilde C_2$-Lattices}
\author{
    Franziska Stamer\thanks{\href{mailto:franziska.stamer@uni-paderborn.de}{franziska.stamer@uni-paderborn.de}, Universität Paderborn}
    \and
    Thomas Titz Mite\thanks{\href{mailto:titz.mite.math@gmail.de}{titz.mite.math@gmail.de}, Justus-Liebig-Universität Gießen}
}
\date{November 2025}
\begin{document}

\maketitle

\begin{abstract}
    We construct the first examples of chamber-regular lattices on $\tilde C_2$-buildings. Assuming a conjecture of Kantor, our list of examples becomes a classification for type-preserving, chamber-regular $\tilde C_2$-lattices on locally finite $\tilde C_2$-buildings. The links of special vertices in the buildings we construct, are all isomorphic to the unique generalized quadrangle $Q$ of order (3,5). In particular, our constructions involve chamber-regular actions on $Q$. These actions on $Q$ are the first and if Kantor's conjecture holds the only chamber-regular actions on a finite generalized quadrangle and therefore interesting in their own right. Moreover $Q$ is not Moufang and therefore none of our examples are Bruhat-Tits buildings and all our lattices are exotic building lattices.
\end{abstract}

\section{Introduction}

    Euclidean buildings were introduced by Bruhat and Tits as associated geometries to simple algebraic groups $\textbf{G}$ defined over a valued field $\KK$ \cite{BruhatTits1, BruhatTits2}. The buildings that arise from the Bruhat--Tits construction are called Bruhat--Tits buildings and form a proper subclass of the class of Euclidean buildings. By work of Tits and Weiss, an irreducible Euclidean building, that is not Bruhat--Tits, has dimension at most two \cite{Tits_classification,Weiss}, so it is either a tree or of type $\tilde A_2, \tilde C_2$ or $\tilde G_2$. We call an irreducible Euclidean building which is neither a tree nor Bruhat--Tits an \textit{exotic building}.

    We will be interested in groups that act properly discontinously, and cocompactly on Euclidean buildings.
    The prime examples of such actions on Bruhat--Tits buildings are given by $S$-arithmetic groups: if $\Gamma$ is a uniform lattice in $\textbf{G}(\KK)$, then it acts properly discontiniuously and cocompactly on the associated Bruhat--Tits building.
    Inspired by this, we call a group that acts properly discontiniuously, cocompactly and faithfully on a Euclidean building $X$ a \textit{uniform lattice on $X$}. In this article a uniform lattice on an exotic building will just be called \textit{exotic lattice}.
    In the 20th century, several deep results for higher-rank $S$-arithmetic groups were established; for instance, they are known to enjoy Kazhdan's property (T) \cite{Kazhdan} and to be hereditarily just-infinite (after factoring out the center) \cite{MargulisNormalSubgroups}. More recently, it has been shown that exotic lattices also possess these two properties; see \cite{Zuk,Oppenheim} for (T) and \cite{BaderFurmanLecureux,LecureuxWitzel} for just-infiniteness.
    However, it is conjectured \cite{BaderCapraceLecureux, LecureuxWitzel}, that, in contrast to $S$-arithmetic groups, exotic lattices are virtually simple \cite[Conjecture 1.3]{BaderCapraceLecureux}. Moreover, the first examples of simple $\tilde C_2$-lattices have recently been constructed in a very explicit manner \cite{TitzMiteWitzel}. This further motivates the study of exotic lattices, as they constitute a promising source of new examples of infinite simple groups.

    One can construct examples of exotic lattices by considering a finite non-positively curved complex of groups \cite[Chapter III.C]{BridsonHaefliger}. This approach has the advantage that the lattice is given explicitly by a finite presentation.
    A particularly simple form of a complex of groups is a triangle of groups. In this article, we construct exotic chamber-regular $\tilde C_2$-lattices through triangles of groups.
    
    To explain this in more detail, we briefly describe vertex links in $\tilde C_2$-buildings:
    the vertex links in a $\tilde C_2$-building are complete bipartite graphs and generalized quadrangles.
    These are examples of generalized polygons, for which the notion of Moufangness is defined. The Moufang property is a strong symmetry condition and Moufang polygons are classified \cite{TitsWeiss_polygons}. In the case of complete bipartite graphs, the structure is too degenerate for the Moufang condition to impose any meaningful restriction, and every such graph is Moufang. The notion becomes interesting only for more complex examples, such as generalized quadrangles. Non-Moufang quadrangles, however, are less well understood.
    An example of an interesting non-Moufang quadrangle is the unique generalized quadrangle $Q$ of order $(3,5)$; we refer to \cite{DixmierZara} or \cite[6.2.4]{PayneThas} for its uniqueness. Our starting point is the following exceptional phenomenon:
    \begin{proposition}\label{prop:intro_regular_action_on_Q}
        The generalized quadrangle $Q$ admits 11 actions that are regular on chambers.
    \end{proposition}
    We point out that a theorem of Seitz implies that a finite generalized quadrangle admitting a chamber-regular action is necessarily non-Moufang; see \cite{Seitz} or \cite[Theorem 4.8.7]{VanMaldeghem}.
    On the other hand, non-Moufang quadrangles with rich symmetry are rare. It has been conjectured by Kantor \cite{Kantor} that the only finite, non-Moufang quadrangles with chamber-transitive automorphism group are $Q$ and the quadrangle arising from the Lunelli--Sce hyperoval $Q_{\LS}$.
    One can easily check with a computer, that $Q_{\LS}$ does not admit a chamber-regular action \cite{ChamberRegularCode}. So assuming Kantor's conjecture the examples we provide are the only chamber-regular actions on finite generalized quadrangles.

    Using the chamber-regular actions from Proposition \ref{prop:intro_regular_action_on_Q}, we construct chamber-regular actions $(\Gamma,X)$ on $\tilde C_2$-buildings by writing down a triangle of groups involving two of these actions and a chamber-regular action on a complete bipartite graph.
    Recall that a vertex in a $\tilde C_2$-building is called \textit{special} if its vertex link is a generalized quadrangle, and with this terminology every special vertex link in one of our constructed buildings is $Q$. 
    We classify all lattices arising in this way, and vice versa if a lattice acts type-preservingly and chamber-regularly on a building with the described vertex links it must arise from this construction. Note that if Kantor's conjecture holds, our lattices are the only lattices that act type-preservingly and chamber-regularly on a locally finite $\tilde C_2$-building.
    \begin{theorem}[Main Theorem]\label{thm:main}
        There are exactly 3044 type-preserving, chamber-regular lattices (up to isomorphism) on $\tilde C_2$-buildings whose links of special vertices are the generalized quadrangle of order (3,5).
    \end{theorem}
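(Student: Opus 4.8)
The plan is to convert the problem into a finite group-theoretic enumeration and then settle it by computer. A chamber-regular group $\Gamma$ acting on such a building $X$ has $X/\Gamma$ equal to a single chamber (a triangle), and $X$ is recovered as the development of the associated \emph{triangle of groups}: label the non-special type by $1$, and write $G_0,G_1,G_2$ for the vertex stabilizers, $H_0,H_1,H_2$ for the edge (panel) stabilizers, and $\{1\}$ for the chamber stabilizer, with the canonical inclusions $H_i\hookrightarrow G_j$ whenever $i\neq j$. Chamber-regularity makes each $G_j=\Stab_\Gamma(v_j)$ act simply transitively on the chambers of $\lk(v_j)$: for $j\in\{0,2\}$ that link is $Q$, so $G_0$ and $G_2$ realize chamber-regular actions on $Q$ — of which there are exactly $11$ — each of order $384$, and the two distinguished subgroups inside $G_0$ (resp.\ $G_2$) are the point- and line-stabilizers of a flag; for $j=1$ the link is a thick generalized digon $K_{q_0+1,\,q_2+1}$, so $G_1$ has order $(q_0+1)(q_2+1)$ and carries an exact factorization $G_1=H_0H_2$ with $H_0\cap H_2=1$. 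Matching the common thickness $q_1$ of the two quadrangle links forces $(q_0,q_1,q_2)\in\{(5,3,5),(3,5,3)\}$, i.e.\ $\lvert G_1\rvert\in\{36,16\}$, so $H_1$ (shared by $G_0,G_2$) has order $4$ or $6$, while $H_0$ (shared by $G_1,G_2$) and $H_2$ (shared by $G_0,G_1$) have the complementary order.

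Conversely — and this is the crux — every triangle of groups of this shape whose three local developments (the coset geometries of each $G_j$ with respect to its two edge subgroups) are, respectively, $Q$, $Q$, and a digon actually integrates to a chamber-regular lattice of the required kind. Such a triangle of groups has Gersten--Stallings angles $\pi/4,\pi/4,\pi/2$, hence lies in the flat case and is developable with $\CAT(0)$ development; the development is simply connected, piecewise-Euclidean, modeled on the $\tilde C_2$ Coxeter complex, and has all vertex links equal to thick generalized polygons of the prescribed orders, hence is a $\tilde C_2$-building by the local-to-global principle for buildings (a simply connected chamber system whose rank-two residues are the appropriate generalized polygons is a building); and the reconstructed $\Gamma$-action is chamber-regular with links $Q$. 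Combined with the geometric rigidity of these lattices (an abstract isomorphism of two such $\Gamma$ is induced by an isomorphism of the ambient buildings, hence of the triangles of groups), this yields a bijection between isomorphism classes of the lattices in question and isomorphism classes of such triangles of groups, where an isomorphism of the latter also accounts for the diagram automorphism of $\tilde C_2$ interchanging the two special types, i.e.\ the identification $(G_0,G_1,G_2;H_0,H_1,H_2)\sim(G_2,G_1,G_0;H_2,H_1,H_0)$ (which preserves each thickness vector, so the count splits as a sum over the two cases).

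It then remains to count the triangles of groups. Taking the list of the $11$ chamber-regular actions on $Q$ as input, one runs over all ordered pairs $(G_0,G_2)$ from it and both admissible thickness vectors; each choice pins down the isomorphism types of $H_0,H_1,H_2$ and the conjugacy classes of their embeddings as flag point/line-stabilizers into $G_0$ and into $G_2$. For each, one enumerates the groups $G_1$ admitting an exact factorization into subgroups isomorphic to the prescribed $H_0$ and $H_2$, and then the admissible gluings, namely the identifications of each abstract edge group with its prescribed copy in the two vertex groups containing it, under which the chamber group remains trivial. Every surviving datum is a valid triangle of groups; quotienting the resulting finite set by the symmetry group (isomorphisms of triangles of groups together with the $0\leftrightarrow2$ swap) gives the number of isomorphism classes, which the computation returns as $3044$. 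I expect the main obstacle to be the ``conversely'' step above — certifying that simple connectedness together with the prescribed generalized-polygon links already forces a genuine $\tilde C_2$-building, and that isomorphism of lattices is faithfully recorded by the combinatorial data — whereas the enumeration itself, though fiddly to set up correctly, is then routine. (Since $Q$ is non-Moufang, all $3044$ resulting lattices are exotic, but this is not needed for the count.)
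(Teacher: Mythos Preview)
Your proposal is correct and takes essentially the same route as the paper: reduce to triangles of groups with the prescribed local actions, invoke non-positive curvature for developability and the local criterion for the building property, enumerate the gluings, and quotient by isomorphism including the diagram swap. The paper organizes the enumeration explicitly via double cosets (so that most of the count is carried out by hand from small tables of local automorphism data rather than a black-box computer search), and it supplies the rigidity step you correctly flag as the main obstacle by showing that vertex and edge stabilizers are recoverable intrinsically from $\Gamma$ as the maximal finite subgroups and the maximal elements among their pairwise intersections.
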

    As exotic lattices, the constructed examples enjoy Kazhdan's property (T) \cite{Oppenheim} and are hereditarily just-infinite \cite{LecureuxWitzel}. Moreover, our approach provides explicit presentations for the lattices, we give an example:

    \begin{example}\label{exp:presentation_lattice}
        Let $\Gamma$ be the group presented by the generators $a,b,c$ subject to the relations $\mathcal R$ indicated below. Then the coset complex with respect to the subgroups $\langle a \rangle, \langle b \rangle$, $\langle c \rangle$ is an exotic $\tilde C_2$-building on which $\Gamma$ acts chamber-regularly. 
        \begin{align*}
            \mathcal R := \bigl \{
            & a^4, \;\;\; b^6, \;\;\; c^6, \;\;\; [b,c], \\
            & a  b^{-1} a^{-1}  b^2  a^{-1}  b  a^{-2}  b,
            && a  b  a^{-2}  b^{-1}  a^{-1}  b  a^{-2}  b^{-1},
            && a  b  a  b^{-1}  a  b^{-2}  a^{-1}  b^2,\\
            & a  c^{-1}  a^{-1}  c^2  a^{-1}  c  a^{-2}  c,
            && a  c  a^{-2}  c^{-1}  a^{-1}  c  a^{-2}  c^{-1},
            && a  c  a  c^{-1}  a  c^{-2}  a^{-1}  c^2  \bigr \}
        \end{align*}
    \end{example}
    
    In general, chamber-transitive lattices on Euclidean buildings are very rare, underscoring the interest in their construction. Under suitable assumptions, such lattices admit a classification, which typically yields a finite list of exceptional examples.
    In \cite{KantorLieblerTits}, lattices acting type-preservingly and chamber-transitively on Bruhat--Tits buildings are classified. In \cite{KirschmerNebe}, all examples from \cite{KantorLieblerTits} are recovered, together with an additional lattice, indicating that the original proof is not complete as stated. An overview of the known chamber-regular lattices on Bruhat--Tits buildings is provided in Table~1 in \cite{KirschmerNebe}.
    Timmesfeld studies chamber-transitive actions with finite chamber stabilizers and Moufang 2-residues \cite{Timmesfeld_large_thickness, Timmesfeld_Rank3}. Of particular relevance to our setting is \cite[Theorem 1]{Timmesfeld_Rank3}, which imposes strong restrictions on the types, thicknesses, and stabilizers of chamber-transitive lattices on two-dimensional Euclidean buildings with Moufang polygons as vertex stabilizers.

    This article is structured as follows.
    In Sections \ref{sec:triangles_of_groups} and \ref{sec:buildings} we introduce triangles of groups and Euclidean buildings and discuss some elementary properties.
    In Section \ref{sec:quadrangle_Q} we define the unique generalized quadrangle $Q$ of order~(3,5) and construct two chamber-regular actions explicitly.
    The data that encodes the 11 chamber-regular action on $Q$ can be found in Appendix \ref{app:presentations}. These actions have been computed with the computer algebra system GAP \cite{GAP4}.
    Using these actions we construct chamber-regular $\tilde C_2$-lattices and classify the results up to group isomorphism in Section \ref{sec:classification}.    
    Appendix \ref{app:more_data} contains some more combinatorial data that is needed for the classification.\\

    \noindent \textbf{Acknowledgments.} The authors thank Stefan Witzel for helpful discussions. They are also grateful to the anonymous referees for their valuable comments and suggestions, which have helped improve this article.

    \noindent \textbf{Computer results}
    Some results in this article were obtained with the assistance of a computer. We provide code for verifying these results in the GitHub repository \cite{ChamberRegularCode}. The code is written in GAP~\cite{GAP4}.

\section{Triangles of groups}\label{sec:triangles_of_groups}
    A triangle of groups is a commutative diagram consisting of seven groups. It is a compact way to encode group actions on simply-connected, 2-dimensional simplicial complexes that are transitive on 2-simplices. Triangles of groups have been studied by Stallings--Gersten \cite{StallingsGersten, Stallings}. In Section \ref{sec:classification} we construct chamber-regular $\tilde C_2$-lattices through triangles of groups. 

    \subsection{Triangles of groups and developements}
    
    \begin{definition}
    Let $A,E_1, E_2, E_3, V_1, V_2, V_3$ be groups. Let $\alpha_i : A \to E_i$ and $\epsilon_{ij} : E_i \to V_j$ be monomorphisms for $1\leq i, j \leq 3, i\neq j$. We call this collection of groups and monorphisms a non-degenerate triangle of groups if the following hold:
    \begin{enumerate}
        \item The diagram in Figure \ref{fig:comm_dia_triangle} commutes,
        \item $\epsilon_{ik}(E_i) \cap \epsilon_{jk}(E_j)=\epsilon_{ik}\circ\alpha_i(A)= \epsilon_{jk}\circ \alpha_j(A) \;\text{for}\; \{i,j, k\}=\{1, 2, 3\}$,
        \item $\langle \epsilon_{ik}(E_i), \epsilon_{jk}(E_j)\rangle = V_k $ and $ \epsilon_{ik}(E_i)
        \neq \epsilon_{ik}(E_i)\cap \epsilon_{jk}(E_j)\neq \epsilon_{jk}(E_j)\\\text{for}\; 
        \{i,j, k\}=\{1, 2, 3\}$.
    \end{enumerate}
    The groups in a triangle of groups $T$ are called local groups. The group $A$ is called face group, the groups $E_1, E_2, E_3$ are called edge groups and the groups $V_1, V_2, V_3$ are called vertex groups.
    \end{definition}

    \begin{figure}
        \[
        \begin{tikzpicture}
        \def\outerRadius{3cm}
        \def\innerRadius{1.5cm}
        \coordinate (V3) at (270:\outerRadius);
        \coordinate (V1) at (30:\outerRadius);
        \coordinate (V2) at (150:\outerRadius);
        \coordinate (E3) at (90:\innerRadius);
        \coordinate (E1) at (210:\innerRadius);
        \coordinate (E2) at (330:\innerRadius);
        \coordinate (A) at (0:0); 
        \node (V3) at (270:\outerRadius){$V_3$};
        \node (V1) at (30:\outerRadius){$V_1$};
        \node (V2) at (150:\outerRadius){$V_2$};
        \node (E3) at (90:\innerRadius){$E_3$};
        \node (E1) at (210:\innerRadius){$E_1$};
        \node (E2) at (330:\innerRadius){$E_2$};
        \node (A) at (0:0){$A$}; 
        \draw[->] (E3) to node[above] {\footnotesize$\epsilon_{32}$} (V2);
        \draw[->] (E3) to node[above] {\footnotesize$\epsilon_{31}$} (V1);
        \draw[->] (E1) to node[midway, sloped, below] {\footnotesize$\epsilon_{13}$} (V3);
        \draw[->] (E1) to node[midway, sloped, below] {\footnotesize$\epsilon_{12}$} (V2);
        \draw[->] (E2) to node[midway, sloped, below] {\footnotesize$\epsilon_{21}$} (V1);
        \draw[->] (E2) to node[midway, sloped, below] {\footnotesize$\epsilon_{23}$} (V3);
        \draw[->] (A) to node[midway, right] {\footnotesize$\alpha_3$} (E3);
        \draw[->] (A) to node[midway, sloped, above] {\footnotesize$\alpha_1$} (E1);
        \draw[->] (A) to node[midway, sloped, above] {\footnotesize$\alpha_2$} (E2);
        \end{tikzpicture}
        \]
        \caption{A triangle of groups.}
        \label{fig:comm_dia_triangle}
    \end{figure}

    \begin{remark}
        A possibly degenerate triangle of groups is a commutative diagram as above not necessarily satisfying Properties 2 and 3. In this article, we only consider non-degenerate triangles of groups and just call them triangles of groups.
    \end{remark}

    \begin{remark}
        A triangle of groups is a special case of a complex of groups as studied in \cite[III.C]{BridsonHaefliger}. In fact, triangles of groups are simple complexes of groups in the sense of \cite[II.12]{BridsonHaefliger}, which are technically less involved than general complexes of groups.
    \end{remark}

    In Figure \ref{fig:comm_dia_triangle}, the edge groups and vertex groups are indexed by $\{1,2,3\}$. This enumeration is arbitrary but it will be convenient to have such an enumeration. This motivates the definition of a type-function.

    \begin{definition}
        Let $T$ be triangle of groups. Then any bijection from the set of vertex groups to $\{1,2,3\}$ is called a type function on $T$. If we have a type function, we can extend it to the set of edge groups by requiring that the edge group with type $i$ embeds in the vertex groups with types $j$ and $k$ for $\{i,j,k\} = \{1,2,3\}$.
    \end{definition}

    For the rest of this section we fix $T$ as the triangle of groups from Figure~\ref{fig:comm_dia_triangle}. We equip it with the type function $f$ indicated by the notation ${f:V_i \mapsto i}, {\; E_i\mapsto i}$.
    
    \begin{definition}
        Let $T'$ be another triangle of groups with local groups $A'$, $(E_i')_{1\leq i \leq 3}$ and $(V_i')_{1\leq i \leq 3}$ and let $f'$ be the type function indicated by the notation.
        \begin{enumerate}
            \item A family of isomorphisms $\phi_A : A \to A', \phi_{E_i} : E_i \to E_i', \phi_{V_i} : V_i \to V_i'$ is called type-preserving isomorphism between $(T, f)$ and $(T', f')$ if $\phi_{E_i}\circ \alpha_i=\alpha'_i \circ \phi_{A}$ and $\phi_{V_j} \circ \epsilon_{ij} = \epsilon_{ij}' \circ \phi_{E_i}$ for $1\leq i, j \leq 3, i\neq j$. If it is clear which type-functions we are referring to, we might omit them in the notation.
            \item
            We call $T$ and $T'$ isomorphic, if they are type-preservingly isomorphic for some type-functions on $T$ and $T'$.
        \end{enumerate}
    \end{definition}

    \begin{remark}
        Note that an isomorphism between triangles of groups is always determined by the isomorphisms between the edge groups.
    \end{remark}

    We now describe the action associated to \textit{developable} triangles of groups.
        
    \begin{definition}
        \begin{enumerate}
            \item We define the fundamental group $\pi_1(T)$ of $T$ as the colimit of $T$ (considered as commutative diagram).
            \item We call $T$ developable if the canonical morphisms from the vertex groups to the fundamental group $\pi_1(T)$ are injective.
        \end{enumerate}
    \end{definition}

    If $T$ is developable, then we call the images of the local groups in $\pi_1(T)$ also just local groups, and we denote them by $A, E_i$ and $V_i$ as well.

    A developable triangle of groups $T$ has an associated geometry on which its fundamental group acts: \textit{the development}. The development is a special case of a coset complex, which we now introduce.

    \begin{definition}
        Let $G$ be a group and let $\mathcal H =(H_0, \dots, H_d)$ be a family consisting of distinct subgroups. The coset complex $\mathcal{CC}(G, \mathcal H)$ is the abstract simplicial complex with vertex set $\cup_i G/H_i$ and maximal simplices $\{gH_0, \dots, g H_d\}$ for $g \in G$. 
    \end{definition}

    \begin{observation}
        The coset complex associated to $G$ and $\mathcal H =(H_0, \dots, H_d)$ is $d$-dimensional and pure. It is connected if and only if the subgroups in $\mathcal H$ generate $G$.
        The group $G$ acts on the coset complex $\mathcal{CC}(G, \mathcal H)$ by left-multiplication and the action is transitive on maximal simplices. Every simplex is of the form $F = \{g H_{i_0}, \dots, g H_{i_k}\}$ for some $J :=\{i_0, \dots, i_k\} \subseteq \{0, \dots, d\}$. The stabilizer of the simplex $F$ is $g(\cap_{j\in J} H_{j}) g^{-1}$. In particular if the groups in $\mathcal H$ have trivial intersection, the action of $G$ is regular on maximal simplices.
    \end{observation}    

    \begin{definition}
        Assume that $T$ is developable.
        We define the abstract development $D^{\Ab}(T)$ as the coset complex associated to $\pi_1(T)$ and the images of the vertex groups.
        We define the development $D(T)$ of $T$ as the geometric realization of $D^{\Ab}(T)$.
    \end{definition}

    The abstract development $D^{\Ab}(T)$ is two-dimensional and pure, and the type-function~$f$ of $T$ induces a type-function on both $D^{\Ab}(T)$ and $D(T)$.
    The canonical action of the fundamental group is type-preserving and transitive on 2-simplices.
    The action has seven orbits on simplices which can be indexed by the coset spaces of the seven local groups.

    The following result is a consequence of Corollary II.12.21 in \cite{BridsonHaefliger}.

    \begin{proposition}
        The development (of a developable triangle of groups) is connected and simply-connected.
    \end{proposition}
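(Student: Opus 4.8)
The plan is to present $T$ as a \emph{complex of groups} in the sense of Bridson--Haefliger and then read the statement off their development theorem \cite[Theorem~II.12.21]{BridsonHaefliger}. Let $\mathcal{Y}$ be the scwol (small category without loops) associated to the poset of nonempty faces of a $2$-simplex: it has one object $a$ for the face, objects $e_1,e_2,e_3$ for the edges, objects $v_1,v_2,v_3$ for the vertices, and one morphism whenever one cell is a proper face of another. Assign to these objects the groups $A$, $E_i$, $V_i$, and take as structure monomorphisms the maps $\alpha_i$ and $\varepsilon_{ij}$ of $T$. Since the diagram in Figure~\ref{fig:comm_dia_triangle} commutes and all of $\alpha_1,\alpha_2,\alpha_3$ and $\varepsilon_{12},\dots,\varepsilon_{32}$ are injective, every coherence (``twisting'') element in the definition of a complex of groups may be taken to be trivial, so we obtain a \emph{simple} complex of groups $G(\mathcal{Y})$ over $\mathcal{Y}$.

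Next I would match the relevant data. \emph{Fundamental group:} the geometric realization $|\mathcal{Y}|$ is a $2$-simplex, hence simply connected, and since $\mathcal{Y}$ is simply connected the fundamental group of a complex of groups over it is just the colimit of its underlying diagram of local groups and structure maps; so $\pi_1(G(\mathcal{Y}))\cong\pi_1(T)$ canonically, and under this isomorphism the canonical maps $V_i,E_i,A\to\pi_1(G(\mathcal{Y}))$ become the canonical maps into $\pi_1(T)$. \emph{Developability:} if each $V_i\to\pi_1(T)$ is injective then, because the $\alpha_i,\varepsilon_{ij}$ are monomorphisms and the diagram commutes, the composites $A\to E_i\to V_j\to\pi_1(T)$ and $E_i\to V_j\to\pi_1(T)$ are injective as well, so all seven canonical maps are injective --- which is exactly the condition for $G(\mathcal{Y})$ to be developable in the sense of \cite{BridsonHaefliger}. \emph{The development:} the objects of the Bridson--Haefliger development $D(G(\mathcal{Y}),\iota)$, for $\iota\colon G(\mathcal{Y})\to\pi_1(T)$ the canonical morphism, are the left cosets in $\pi_1(T)$ of the images of the local groups, with incidence given by inclusion of cosets; restricting to cosets of the vertex groups and invoking Observation~\ref{obs:simplex_coset}, one identifies the geometric realization $|D(G(\mathcal{Y}),\iota)|$ with the barycentric subdivision of the simplicial complex $D^{\Ab}(T)$, hence with $D(T)$ up to homeomorphism. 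With all three identifications in hand, \cite[Theorem~II.12.21]{BridsonHaefliger} --- the assertion that the development of a developable complex of groups is connected and simply connected --- gives the Proposition.

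The only real work is the bookkeeping of the second paragraph, and the point that deserves care is the last identification: one must fix the direction conventions for the morphisms of $\mathcal{Y}$ so that the Bridson--Haefliger development really does specialize, on vertex-group cosets, to the complex of Observation~\ref{obs:simplex_coset}, and one must make sure that the passage from ``all local-group cosets'' to ``vertex-group cosets only'' is an honest simplicial (barycentric) identification rather than merely a homotopy equivalence. Everything substantive --- in particular simple-connectedness, which otherwise calls for a van Kampen argument contracting edge-loops by means of the colimit relations --- is imported wholesale from \cite{BridsonHaefliger}.
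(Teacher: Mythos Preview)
Your proposal is correct and takes essentially the same approach as the paper: the paper's proof is the single sentence ``This is a consequence of Theorem~II.12.21 in \cite{BridsonHaefliger}.'' You have simply spelled out the bookkeeping---the identification of a triangle of groups with a simple complex of groups over the scwol of a $2$-simplex, the matching of the two notions of developability, and the identification of the two developments---that the paper leaves implicit.
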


    It is important to mention that we can also associate triangles of groups to suitable actions. 

    \begin{definition}\label{def:action_from_triangle}
        Let $X$ be connected, simply connected, pure two-dimensional simplicial complex $X$ such that every vertex link is a connected, bipartite graph without leaves and multiple edges. Assume that $X$ is equipped with a type function and let $\Gamma$ be a group of type-preserving, simplical isometries, that is transitive on 2-simplices.
        Then we say that the action $(\Gamma, X)$ arises from a triangle of groups.
        In that case for any choice of a 2-simplex $t$ we get a triangle of groups $T$ by considering the stabilizers of the faces of $t$. We call $T$ the triangle of groups associated to the action $(\Gamma,X)$ with respect to $t$.
        Different choices of $t$ just yield isomorphic triangles of groups and therefore we will often omit explicit reference to $t$.
    \end{definition}

    This definition is justified by the the following proposition, which is a direct consequence of Proposition II.12.20 (1) in \cite{BridsonHaefliger}.

    \begin{proposition}\label{prop:uniqueness_dev}
        Let $(\Gamma, X)$ be an action arising from a triangle of groups, and let $T$ be a triangle of group associated to it. Then the actions $(\Gamma, X)$ and $(\pi_1(T), D(T))$ are type-preservingly isomorphic.
    \end{proposition}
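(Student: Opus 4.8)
The plan is to produce a comparison pair $(\phi,\psi)$ and then argue that $\psi$ is a covering of $X$, which forces it to be an isomorphism because $X$ is simply connected. Write $v_i,e_i,t$ for the vertices, edges and $2$-simplex of the chosen simplex, so that by definition of the associated triangle of groups $V_i=\Stab_\Gamma(v_i)$, $E_i=\Stab_\Gamma(e_i)$, $A=\Stab_\Gamma(t)$, and every $\alpha_i$ and $\varepsilon_{ij}$ is an inclusion of subgroups of $\Gamma$. Then the inclusions of the local groups into $\Gamma$ are compatible with the structure maps of $T$, so the universal property of the colimit $\pi_1(T)$ yields a homomorphism $\phi\colon\pi_1(T)\to\Gamma$ restricting on each local group to its inclusion; since those inclusions are injective, so are the canonical maps of the local groups into $\pi_1(T)$, hence $T$ is developable and we identify $A,E_i,V_i$ with their images in $\pi_1(T)$. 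Next I would define $\psi\colon D(T)\to X$ on vertices by $gV_i\mapsto\phi(g)v_i$; this is well defined because $\phi(V_i)=\Stab_\Gamma(v_i)$, it is $\phi$-equivariant and type-preserving, and by Observation~\ref{obs:simplex_coset} it carries the $2$-simplex $\{xV_1,xV_2,xV_3\}$ to $\phi(x)t$ and the $1$-simplex $\{xV_j,xV_k\}$ to $\phi(x)e_i$, so it is a simplicial map.

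The heart of the argument is to check that $\psi$ induces an isomorphism $\lk_{D(T)}(gV_i)\to\lk_X(\phi(g)v_i)$ for every vertex; by equivariance it suffices to treat $g=1$. Here I would use that $\Gamma$ is type-preserving and transitive on $2$-simplices: every $2$-simplex of $X$ through $v_i$ is of the form $at$ with $a\in V_i$, and the stabiliser of $t$ in $V_i$ is $V_i\cap A=A$, so these $2$-simplices form a single $V_i$-orbit, in bijection with $V_i/A$ — exactly as for the $2$-simplices of $D(T)$ through $V_i$; likewise, for $\{i,j,k\}=\{1,2,3\}$ the edges of $X$ joining $v_i$ to a type-$j$ vertex are in $V_i$-equivariant bijection with $V_i/(V_i\cap V_j)=V_i/E_k$, as are the corresponding edges of $D(T)$ through $V_i$. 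Under these identifications $\psi$ is the identity, and on both sides the edge labelled $xE_k$ is a face of the $2$-simplex labelled $yA$ precisely when $y^{-1}x\in V_j$, i.e.\ $xE_k=yE_k$; hence $\psi$ matches incidences and is a graph isomorphism on each link. This bookkeeping — lining up the types and cosets on the two sides — is the step I expect to demand the most care, though it is routine rather than deep; the one genuinely essential hypothesis, simple connectivity of $X$, enters only at the very end.

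Finally, a simplicial map restricting to an isomorphism on every vertex link has geometric realisation a covering map, whose image is all of $X$ since a covering of the connected complex $X$ is onto; as $D(T)$ is connected (by the preceding Proposition) and $X$ is simply connected, $|\psi|$ is a homeomorphism, so $\psi$ is a simplicial isomorphism. It remains to see that $\phi$ is an isomorphism. It is surjective because $\psi$ is: the $2$-simplices of $X$ are precisely the $\phi(g)t$ for $g\in\pi_1(T)$, $\Gamma$ is transitive on them, and $A\subseteq\phi(\pi_1(T))$, so $\phi(\pi_1(T))=\Gamma$. And it is injective because if $\phi(g)=1$ then $\psi$, being injective, forces $g$ to fix every vertex of $D(T)$, so $g\in V_1\cap V_2\cap V_3=A$ by Observation~\ref{obs:simplex_coset}, whence $g=1$ since $\phi|_A$ is injective. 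The pair $(\phi,\psi)$ is then the desired type-preserving isomorphism of the two actions.
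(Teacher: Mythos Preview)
Your argument is correct and is essentially the standard proof of this fact; the paper itself does not give an independent argument but simply cites Proposition~II.12.20(1) in Bridson--Haefliger, whose proof proceeds along exactly the same lines (build the comparison map from the universal property of the colimit, check it is a local isomorphism by matching vertex links with the coset graphs, and conclude by simple connectivity). One minor simplification in your final step: once $g$ fixes the vertex $V_1$ you already have $g\in V_1$, and $\phi|_{V_1}$ is by construction the inclusion $V_1\hookrightarrow\Gamma$, hence injective, so $g=1$ directly---no need to pass through $V_1\cap V_2\cap V_3=A$ via Observation~\ref{obs:simplex_coset}.
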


    \subsection{Non-positive curvature}
    We continue with a sufficient condition for developability, namely non-positive curvature.
    In this subsection $T$ is a triangle of groups with local actions on finite graphs.
         
    \begin{definition}
        If $E\neq F$ are subgroups of a group $V$, then the coset graph of $V$ with respect to $E$ and $F$ is the coset complex $\mathcal C \mathcal C(V, (E,F))$.
        It is a bipartite simplicial graph and $V$ acts on it via left multiplication. The action is type-preserving and transitive on edges.
    \end{definition}

    \begin{definition}
        \begin{enumerate}
            \item The local action of type $i$ in $T$ is the action by the vertex group $V_i$ on its coset graph $\Gamma_i$ with respect to $\epsilon_{ji}(E_j)$ and $\epsilon_{ki}(E_k)$. 
            The angle of type $i$ is defined as $\frac{2\pi}{\Girth(\Gamma_i)}$.
            \item We call $T$ non-positively curved if the sum of its angles is at most $\pi$.
        \end{enumerate}
    \end{definition}

    Note that by non-degenerality of $T$, the graphs in the local actions are connected, have no leaves and have at least two vertices of each type, in particular these graphs have finite girth.

    \begin{proposition}\label{prop:dev_cat0}
        Assume that $T$ is non-positively curved. Then $T$ is developable and the development can be turned into a complete $\CAT(0)$-space by equipping every 2-simplex with the metric of a triangle in $\mathbb E^2$ or $\mathbb H^2$ with the same angles as $T$. If we do so, the fundamental group acts by isometries.
    \end{proposition}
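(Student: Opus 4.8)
The strategy is to recognise $T$ as a \emph{simple complex of groups} over the barycentric subdivision of a triangle and then to invoke the metric developability theory of Bridson--Haefliger. First I would make this dictionary explicit. The scwol $\mathcal Y$ underlying the first barycentric subdivision of a $2$-simplex has one vertex for each nonempty face of the triangle; assigning $A$ to the top cell, $E_1,E_2,E_3$ to the three edges, $V_1,V_2,V_3$ to the three vertices, and the monomorphisms $\alpha_i,\varepsilon_{ij}$ as the structure maps turns $T$ (together with its type function $f$) into a simple complex of groups $G(\mathcal Y)$ in the sense of \cite[Chapter~II.12]{BridsonHaefliger}. Under this identification $\pi_1(T)$ is the fundamental group of $G(\mathcal Y)$, the two notions of developability coincide, and $D(T)$ is (the geometric realisation of) the development of $G(\mathcal Y)$; the non-degeneracy conditions on $T$ are exactly what guarantees that $D(T)$ is a genuine simplicial complex whose vertex links are the coset graphs $\Gamma_i$, as recorded in Observation~\ref{obs:simplex_coset}. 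This part is pure bookkeeping, the only points requiring care being the direction of the structure maps and the compatibility of the type functions.

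Next I would metrise $G(\mathcal Y)$. Write $\theta_i=2\pi/\Girth(\Gamma_i)$ for the angle of type $i$ and assume first that every $\theta_i$ is positive (equivalently, every local graph has finite girth; this holds throughout our constructions in Section~\ref{sec:classification}, where each $\Gamma_i$ is the incidence graph of $Q$ and so has girth $8$). Non-positive curvature of $T$ says $\theta_1+\theta_2+\theta_3\le\pi$, so there is a geodesic triangle $\Delta$ with angles $\theta_1,\theta_2,\theta_3$ in $\mathbb E^2$ if the sum equals $\pi$ and in $\mathbb H^2$ (there unique up to isometry) if the sum is strictly smaller. Setting $\kappa=0$, respectively $\kappa=-1$, I give every $2$-simplex of $\mathcal Y$ the metric of $\Delta$, matching corners to vertices by type. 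This is an $\mathbb M_\kappa$-polyhedral structure on $G(\mathcal Y)$ with only one isometry type of $2$-cell, so the induced length metric on $D(T)$ is a complete geodesic metric space by \cite[I.7]{BridsonHaefliger}.

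The heart of the argument is the link condition. In this metric complex the link of a vertex of type $i$ is its coset graph $\Gamma_i$, and every $2$-simplex meeting that vertex contributes the type-$i$ corner of $\Delta$; hence all edges of this metric graph have length $\theta_i$, and its systole equals $\Girth(\Gamma_i)\cdot\theta_i=2\pi$. A metric graph with no locally geodesic loop shorter than $2\pi$ is $\CAT(1)$, so every vertex link is $\CAT(1)$; links of edges are finite discrete sets and links of $2$-simplices are empty, hence trivially $\CAT(1)$. Therefore $G(\mathcal Y)$ is non-positively curved in the sense of \cite[II.12]{BridsonHaefliger}, and \cite[Theorem~II.12.28]{BridsonHaefliger} shows that $G(\mathcal Y)$, equivalently $T$, is developable and that $D(T)$ with the metric above is locally $\CAT(\kappa)$. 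Since $D(T)$ is complete and, being the development of a developable triangle of groups, simply connected, the Cartan--Hadamard theorem \cite[II.4]{BridsonHaefliger} upgrades this to globally $\CAT(\kappa)$, in particular $\CAT(0)$. Finally $\pi_1(T)$ acts on $D(T)$ by simplicial maps that restrict to isometries on each $2$-simplex (the metric was defined by pulling it back along these maps), hence by isometries, which completes the argument.

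The main obstacle is not any single estimate but the verification that the combinatorial curvature hypothesis on $T$ corresponds to the metric $\CAT(1)$ condition on the vertex links of the development — that is, the systole computation above — together with setting up the dictionary of the first paragraph correctly. The genuinely degenerate situation, in which some $\Girth(\Gamma_i)=\infty$ and $\Delta$ would have to be taken with an ideal vertex, does not arise in our setting and I would simply exclude it.
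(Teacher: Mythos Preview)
Your proposal is correct and follows exactly the route the paper takes: the paper's entire proof is the single sentence ``This is a consequence of Theorem II.12.28 in \cite{BridsonHaefliger}'', and what you have written is a careful unpacking of why that theorem applies (the dictionary with simple complexes of groups, the metrisation, the systole-equals-$2\pi$ link computation, and Cartan--Hadamard). One small inaccuracy in your aside: in Section~\ref{sec:classification} not every local graph is the incidence graph of $Q$; the non-special vertex has link $K_{4,4}$ or $K_{6,6}$, of girth $4$, but this does not affect your argument.
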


    \begin{proof}
        If the local groups are finite, this is a consequence of \cite[Theorem II.12.28]{BridsonHaefliger}. In the general case, we consider $T$ as complex of groups \cite[Chapter III.C]{BridsonHaefliger}. Let $\mathcal Y$ be the underyling scwol. Its geometric realization $|\mathcal Y|$ is just a topological triangle, and we equip it with the metric as described in the proposition. Then $T$ is non-positively curved in the sense of \cite[Chapter III.C, Definition 4.16]{BridsonHaefliger} and our proposition follows from \cite[Chapter III.C, Theorem 4.17]{BridsonHaefliger} together with the short subsequent discussion.
    \end{proof}

    \begin{remark}
        Assume that $T$ is non-positively curved.
        Let $X$ be the development equipped with the metric from Proposition~\ref{prop:dev_cat0}. Then $X$ is locally compact and its automorphism group $\Aut(X)$, which is the group of simplicial isometries, can be equipped with the compact-open topology.
        If we do so, it becomes a (possibly discrete) locally compact group. Note that $\Gamma \leq \Aut(X)$ might not be closed. However, if in addition the local groups of $T$ are all finite, then $\Gamma$ is a uniform lattice in $\Aut(X)$, i.e. it is a discrete cocompact subgroup.
    \end{remark}
    
    \subsection{Global actions with given local actions}\label{subsec:given_local}
    
    In this subsection we study the number of isomorphism classes of triangles of groups with given local actions. We are mostly interested in triangles of groups with trivial face group, so we fix the following notation.

    For $1 \leq i \leq 3$ let $V_i$ be a group and let $E^i_A, E^i_B$ be subgroups that have trivial intersection and together generate the group $V_i$.
    \begin{definition}
    \begin{enumerate}
        \item
        For $\{i,j,k\} = \{1,2,3\}$ let $f_i : \{j, k\} \to \{A,B\}$ be a bijection. We call $(f_i)_i$ a matching of $((V_i,E^i_A, E^i_B))_i$ if $E^j_{f_j(i)} \cong E^k_{f_j(i)}$ for all $\{i,j,k\} = \{1,2,3\}$.
        If such a matching exists, we call the triple $((V_i,E^i_A, E^i_B))_i$ compatible with respect to the matching $(f_i)_i$.
        \item 
        Assume that the triple $((V_i,E^i_A, E^i_B))_i$ is compatible with respect to the matching $(f_i)_i$.
        Let $T$ be a triangle of groups with vertex groups $(V_i)_i$ and type function indicated by the notation.
        Then we call $T$ a triangle of groups with local action datum $((V_i,E^i_A, E^i_B))_i$ matched along $(f_i)_i$ if the following holds for all $i\neq j$: let $E_i$ be the edge group of type $i$ and denote the monomorphism from $E_i$ to $V_j$ with $\epsilon_{ij}$, then $\epsilon_{ij}(E_i) = E^j_{f_j(i)}$.
    \end{enumerate}
    \end{definition}

    For the rest of this section, we assume that the triple $((V_i,E^i_A, E^i_B))_i$ is compatible with respect to a matching $(f_i)_i$.
    
    We fix abstract groups $E_i$ isomorphic to $E_{f_j(i)}^j$ and $E_{f_k(i)}^k$ and we fix monomorphisms $\kappa_{ij} : E_i \to V_j$, such that $\kappa_{ij}(E_i) = E_{f_j(i)}^j \leq V_j$ for $1 \leq i \neq j \leq 3$. 
    For any choice of $\gamma_{ij} \in \Aut(E_i)$ with $1 \leq i \neq j \leq 3$ let $T((\gamma_{ij})_{ij})$ be the triangle of groups defined in Figure~\ref{fig:T_gamma_ij} and equip it with the type function indicated by the notation. We denote the family consisting of these triangles with $\mathcal T$. Note that any triangle of groups with local action datum $((V_i,E^i_A, E^i_B))_i$ matched along $(f_i)_i$ is canonically isomorphic to a triangle of groups in $\mathcal T$.
    \begin{figure}
        \centering
        \begin{tikzpicture}
            \def\outerRadius{3cm}
            \def\innerRadius{1.5cm}
            \coordinate (V1) at (270:\outerRadius);
            \coordinate (V2) at (30:\outerRadius);
            \coordinate (V3) at (150:\outerRadius);
            \coordinate (E1) at (90:\innerRadius);
            \coordinate (E2) at (210:\innerRadius);
            \coordinate (E3) at (330:\innerRadius);
            \coordinate (A) at (0:0); 
            \coordinate (x) at (0: 3);
            \node (V1) at (270:\outerRadius){$V_3$};
            \node (V2) at (30:\outerRadius){$V_1$};
            \node (V3) at (150:\outerRadius){$V_2$};
            \node (E1) at (90:\innerRadius){$E_3$};
            \node (E2) at (210:\innerRadius){$E_1$};
            \node (E3) at (330:\innerRadius){$E_2$};
            \node (A) at (0:0){$1$}; 
            \draw[->] (E1) to node[above] {\footnotesize $\kappa_{32}\circ \gamma_{32}$} (V3);
            \draw[->] (E1) to node[above] {\footnotesize $\kappa_{31}\circ \gamma_{31}$} (V2);
            \draw[->] (E2) to node[midway, sloped, below] {\footnotesize$\kappa_{12}\circ \gamma_{12}$} (V3);
            \draw[->] (E2) to node[midway, sloped, below] {\footnotesize$\kappa_{13}\circ \gamma_{13}$} (V1);
            \draw[->] (E3) to node[midway, sloped, below] {\footnotesize$\kappa_{23}\circ \gamma_{23}$} (V1);
            \draw[->] (E3) to node[midway, sloped, below] {\footnotesize$\kappa_{21}\circ \gamma_{21}$} (V2);
            \draw[->] (A) to (E1);
            \draw[->] (A) to (E2);
            \draw[->] (A) to (E3);
        \end{tikzpicture}
        \caption{The triangle of groups $T((\gamma_{ij})_{ij})$.}
        \label{fig:T_gamma_ij}
    \end{figure}

    \begin{definition}
        If we have two subgroups $E_j \neq E_k$ of a group $V_i$. We define the local automorphism group $\Sigma(V_i, E_j, E_k)$ of this triple as follows
        \[
            \Sigma(V_i, E_j, E_k) := \{\sigma\in \Aut(V_i)\mid \sigma(E_j) = E_j, \; \sigma(E_k) = E_k \}.
        \]
    \end{definition}

    We denote by $\Sigma^i$ the local automorphism group of $(V_i, E_A^i, E_B^i)$. 
    For $\sigma^j \in \Sigma^j$ let $\sigma^j_i \in \Aut(E_i)$ be the unique automorphism satisfying $\epsilon_{ij} \circ \sigma_i^j = \sigma^j \circ \epsilon_{ij}$. We denote this map by $\Theta^j_i:\sigma^j \mapsto \sigma^j_i$.
    Now we define $\Theta^j : \Sigma^j \to \Aut(E_i) \times \Aut(E_k), \; \sigma_j\mapsto (\sigma_i^j, \sigma_k^j)$ and note that $\Theta^j$ is injective.
    Finally, we define the group $\mathcal I:=\prod_i \Aut(E_i) \times \prod_i \Sigma^i$ and a left action of $\mathcal I$ on the family~$\mathcal T$.

    \begin{definition}
        Let $\phi=(\phi_{E_i})_i\times(\phi_{V_i})_i\in \mathcal I$ and $T := T((\gamma_{ij})_{ij}) \in \mathcal T$. Then we define $\phi.T \in \mathcal T$ as follows.
        \begin{align*}
        \phi . T:&= T\left((\underbrace{\Theta_i^j(\phi_{V_j})}_{\in \Aut(E_i)}\circ \underbrace{\gamma_{ij}}_{\in \Aut(E_i)}\circ \underbrace{\phi_{E_i}^{-1}}_{\in \Aut(E_i)})_{ij}\right)
        \end{align*}
    \end{definition}

    \begin{lemma}
        We have that $T, T'\in \mathcal{T}$ are type-preservinlgy isomorphic if and only if they lie in the same orbit under $\mathcal I$.
    \end{lemma}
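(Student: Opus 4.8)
The plan is to unwind the definition of a type-preserving isomorphism between two members of $\mathcal T$ and to observe that the resulting compatibility equations are literally the equations defining the $\mathcal I$-action on $\mathcal T$; everything then reduces to a short diagram chase. Recall that every triangle in $\mathcal T$ has trivial face group, the same abstract edge groups $E_1,E_2,E_3$ and the same abstract vertex groups $V_1,V_2,V_3$, all carrying the type function indicated by the notation. Hence a type-preserving isomorphism between $T:=T((\gamma_{ij})_{ij})$ and $T':=T((\gamma'_{ij})_{ij})$ is the same datum as a tuple $(\phi_{E_i})_{i}\in\prod_i\Aut(E_i)$, $(\phi_{V_i})_i\in\prod_i\Aut(V_i)$ (the map on the trivial face group being forced), subject to
\begin{equation}
\phi_{V_j}\circ\kappa_{ij}\circ\gamma_{ij}=\kappa_{ij}\circ\gamma'_{ij}\circ\phi_{E_i}\qquad\text{for all }i\neq j.\tag{$\star$}
\end{equation}

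First I would treat the direction ``isomorphic $\Rightarrow$ same orbit''. Assuming $(\star)$, evaluate both sides on all of $E_i$ and use surjectivity of $\gamma_{ij},\gamma'_{ij},\phi_{E_i}$ to get $\phi_{V_j}(E_i^j)=E_i^j$; as this holds for each of the two indices $i\neq j$, the automorphism $\phi_{V_j}$ stabilizes both $E_i^j$ and $E_k^j$, i.e. $\phi_{V_j}\in\Sigma^j$. Hence $\phi:=(\phi_{E_i})_i\times(\phi_{V_i})_i\in\mathcal I=\prod_i\Aut(E_i)\times\prod_i\Sigma^i$. Since $\phi_{V_j}$ preserves $E_i^j=\kappa_{ij}(E_i)$, composing $(\star)$ on the left with $\kappa_{ij}^{-1}$ (the inverse of $\kappa_{ij}$ regarded as an isomorphism $E_i\xrightarrow{\ \sim\ }E_i^j$) gives
\[
\Theta^j_i(\phi_{V_j})\circ\gamma_{ij}=\gamma'_{ij}\circ\phi_{E_i},\qquad\text{hence}\qquad \gamma'_{ij}=\Theta^j_i(\phi_{V_j})\circ\gamma_{ij}\circ\phi_{E_i}^{-1}\quad(i\neq j),
\]
where I used the defining relation $\kappa_{ij}\circ\Theta^j_i(\phi_{V_j})=\phi_{V_j}\circ\kappa_{ij}$ of $\Theta^j_i$. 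Comparing with the definition of $\phi.T$, this is precisely the statement $T'=\phi.T$, so $T$ and $T'$ lie in one $\mathcal I$-orbit. For the converse, given $\phi=(\phi_{E_i})_i\times(\phi_{V_i})_i\in\mathcal I$ I would verify that this same tuple is a type-preserving isomorphism $T((\gamma_{ij}))\to\phi.T$: writing $\gamma'_{ij}:=\Theta^j_i(\phi_{V_j})\circ\gamma_{ij}\circ\phi_{E_i}^{-1}$, the identity $(\star)$ becomes $\phi_{V_j}\circ\kappa_{ij}\circ\gamma_{ij}=\kappa_{ij}\circ\Theta^j_i(\phi_{V_j})\circ\gamma_{ij}$, which again is immediate from $\kappa_{ij}\circ\Theta^j_i(\phi_{V_j})=\phi_{V_j}\circ\kappa_{ij}$. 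Thus every triangle in the $\mathcal I$-orbit of $T\in\mathcal T$ is type-preservingly isomorphic to $T$, and combined with the first part this proves the lemma.

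I do not expect a genuine obstacle. The one step that needs a little care is the deduction in the first direction that compatibility with the edge maps forces each $\phi_{V_j}$ to preserve \emph{both} of the relevant edge subgroups, hence to lie in $\Sigma^j$ — without this one could not even form $\Theta^j_i(\phi_{V_j})$ and $\phi$ would fail to be an element of $\mathcal I$. If one also wishes to be certain that ``lying in the same orbit'' is a bona fide equivalence relation here, one records in passing that $\mathcal I$ does act on $\mathcal T$ on the left, which follows from the same manipulation together with the fact that each $\Theta^j_i$ is a group homomorphism.
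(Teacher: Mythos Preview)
Your argument is correct and follows exactly the approach the paper has in mind: the paper's proof simply asserts that ``direct calculations'' show both implications, and what you have written is precisely those calculations carried out in full. The only addition worth noting is that you make explicit the small but essential point that $(\star)$ forces $\phi_{V_j}\in\Sigma^j$, which the paper leaves implicit.
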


    \begin{proof}
        Direct calculations show that if $\phi \in \mathcal I$, then it is a type-preserving isomorphism from $T$ to $\phi.T$. For the converse, assume that $T, T' \in \mathcal T$ are type-preservingly isomorphic via $\phi$. Since $\phi$ is type-preserving, it induces automorphism on the local groups. Moreover, the automorphisms of the vertex groups, stabilize the edge groups. Therefore, $\phi$ encodes an element in $\mathcal I$ and being an isomorphism translates precisely to $T' = \phi.T$.
    \end{proof}
    
    \begin{observation}\label{obs:double_cosets}
    We can identify the family $\mathcal T$ with $C = \prod_i\Aut(E_i)^2$ by
    identifying $T((\gamma_{ij})_{ij})$ with the tuple
    $(\gamma_{12},\allowbreak \gamma_{13},\allowbreak \gamma_{21},\allowbreak \gamma_{23},\allowbreak \gamma_{31},\allowbreak \gamma_{32})$. Then the maps $\iota_E$ and $\iota_V$ defined below are group monomorphisms from $\prod_i \Aut( E_i)$ and $\prod_i \Sigma^i$ to~$C$.
    \begin{align*}
        \iota_E :&
        &\Aut(E_1) \times \Aut(E_2) \times \Aut(E_3) &\to C,\\
        &&(\phi_{E_1}, \phi_{E_2}, \phi_{E_3}) &\mapsto (\phi_{E_1}, \phi_{E_1}, \phi_{E_2}, \phi_{E_2}, \phi_{E_3}, \phi_{E_3}) \hspace{3cm}
    \end{align*}
    \begin{align*}
        \iota_V :&
        & \Sigma^1 \times \Sigma^2 \times \Sigma^3 & \to C,\\
        &&(\phi_{V_1}, \phi_{V_2}, \phi_{V_3}) & \mapsto
        (\Theta_1^2(\phi_{V_2}), \Theta_1^3(\phi_{V_3}), \Theta_2^1(\phi_{V_1}), \Theta_2^3(\phi_{V_3}), \Theta_3^1(\phi_{V_1}),\Theta_3^2(\phi_{V_2}))
    \end{align*}
    Furthermore, $(\sigma_E, \sigma_V) \in \mathcal I$ acts on $T \in C$ via $\iota_E(\sigma_E) T \iota_V(\sigma_V^{-1})$. In particular the type-preserving isomorphism classes in $C$ correspond to double cosets in $C$ with respect to its subgroups $\iota_E \left(\prod_i \Aut( E_i)\right)$ and $\iota_V\left(\prod_i \Sigma^i\right)$.
    \end{observation}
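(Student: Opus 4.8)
The plan is to verify the three assertions of the observation in turn: that $\iota_E$ and $\iota_V$ are monomorphisms; that, after the identification $\mathcal T\cong C$, the $\mathcal I$-action defined just above becomes a two-sided multiplication action by the images of $\iota_E$ and $\iota_V$; and that such an action has the double cosets as its orbits. First I would fix conventions. The set $C=\prod_i\Aut(E_i)^2$ is a group under coordinatewise composition, and the assignment $T((\gamma_{ij})_{ij})\mapsto(\gamma_{12},\gamma_{13},\gamma_{21},\gamma_{23},\gamma_{31},\gamma_{32})$ is a bijection of $\mathcal T$ onto it — it is injective because each $\kappa_{ij}$ is a monomorphism, so the edge embedding $\kappa_{ij}\circ\gamma_{ij}$ of $T((\gamma_{ij})_{ij})$ already recovers $\gamma_{ij}$ — and I transport the group structure of $C$ to $\mathcal T$ along it.

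Next, the monomorphism claims. The map $\iota_E$ places $\phi_{E_i}$ into both coordinates of the $i$-th factor $\Aut(E_i)$, so on that factor it is the diagonal embedding $\Aut(E_i)\hookrightarrow\Aut(E_i)^2$; hence $\iota_E$ is a homomorphism with trivial kernel. For $\iota_V$ I would first check that each $\Theta^j_i$ is a group homomorphism: this follows from its defining relation together with injectivity of the embedding $E_i\hookrightarrow V_j$ it is built from, since the relation for a product $\sigma^j\tau^j$ factors through those for $\sigma^j$ and $\tau^j$; in particular $\Theta^j_i$ preserves inverses. As $\iota_V$ just distributes the automorphisms $\Theta^j_i(\phi_{V_j})$ over the six coordinates of $C$, one per pair $(i,j)$, it is then a homomorphism as well. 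Its kernel is trivial because each $\Theta^j=(\Theta^j_i,\Theta^j_k)$ is injective: if $\Theta^j(\sigma^j)$ is trivial then $\sigma^j$ restricts to the identity on both $E_i^j$ and $E_k^j$, hence on $V_j=\langle E_i^j,E_k^j\rangle$ — the last equality holding because $(V_j,\Lambda_j)$ is an edge-regular action on a connected graph, so the two endpoint stabilizers of an edge generate $V_j$.

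Finally I would unwind the action definition coordinatewise. The $(i,j)$-coordinate of $\phi.T$ is $\Theta^j_i(\phi_{V_j})\circ\gamma_{ij}\circ\phi_{E_i}^{-1}$, which depends only on $\phi_{E_i}$ and on $\phi_{V_j}$ — exactly the pattern in which these two automorphisms occur in $\iota_E$ and $\iota_V$. Comparing this with the corresponding coordinate of $\iota_E(\sigma_E)\,T\,\iota_V(\sigma_V^{-1})$, and using that $\Theta^j_i$ preserves inverses, yields the claimed two-sided multiplication formula; the inverse on one of the two factors is exactly what makes it a left action. Granting this, the observation follows: by the preceding lemma the type-preserving isomorphism classes in $C$ are the $\mathcal I$-orbits, and under the formula these coincide with the orbits of $\iota_E\!\left(\prod_i\Aut(E_i)\right)\times\iota_V\!\left(\prod_i\Sigma^i\right)$ acting on $C$ by $(h,k).c=hck^{-1}$, whose orbits are by definition the double cosets $\iota_E\!\left(\prod_i\Aut(E_i)\right)\,c\;\iota_V\!\left(\prod_i\Sigma^i\right)$. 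The only steps that are not pure bookkeeping are the homomorphism property and the injectivity of the maps $\Theta^j$; the one thing to watch in the coordinatewise comparison is fixing the composition convention in the groups $\Aut(E_i)$ consistently, so that the two sides of $C$ match the asserted factors — that, rather than any real difficulty, is where an error would slip in.
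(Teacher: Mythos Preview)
Your verification is essentially what the paper intends; the paper itself gives no separate proof for this observation, treating it as an immediate reformulation of the action definition and the preceding lemma. Your supplying the reason for injectivity of $\Theta^j$ (generation of $V_j$ by the two edge stabilizers, from connectedness and edge-regularity) is a welcome detail the paper only asserts.

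There is one genuine slip, and you half-anticipated it in your closing caveat. If you actually carry out the coordinatewise comparison, the $(i,j)$-coordinate of $\phi.T$ is $\Theta_i^j(\phi_{V_j})\circ\gamma_{ij}\circ\phi_{E_i}^{-1}$, whereas the $(i,j)$-coordinate of $\iota_E(\sigma_E)\,T\,\iota_V(\sigma_V^{-1})$ is $\sigma_{E_i}\circ\gamma_{ij}\circ\Theta_i^j(\sigma_{V_j})^{-1}$. These do not agree: the left and right factors are interchanged. The formula that matches the action definition is $\iota_V(\sigma_V)\,T\,\iota_E(\sigma_E)^{-1}$, so the paper's statement has $\iota_E$ and $\iota_V$ on the wrong sides. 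This does not affect the ``in particular'' conclusion, since the double-coset decomposition of $C$ with respect to two subgroups is insensitive (via $c\mapsto c^{-1}$) to which subgroup acts on which side; but your sentence ``comparing \ldots\ yields the claimed two-sided multiplication formula'' is not literally true, and you should say so rather than wave at conventions.
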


\section{Euclidean buildings of dimension 2}\label{sec:buildings}
    Euclidean buildings are non-positively curved simplicial complexes that are covered by copies of the Euclidean space (flats). The maximal flats are called apartments.
    The gluing of the apartments respects a simplicial structure on the Euclidean space, that is induced by a discrete reflection group.
    Together with symmetric spaces, Euclidean buildings provide the main examples of higher rank $\CAT(0)$-spaces.
    In this article we will focus on Euclidean buildings of dimension two, meaning that the apartments are copies of the Euclidean plane.
    
    A Euclidean building may in general be reducible, meaning it splits as a product of two Euclidean buildings. In this article, we only consider irreducible Euclidean buildings and will omit the term \textit{irreducible} throughout.
    
    For a detailed treatment of (Euclidean) buildings, we refer to \cite{AbramenkoBrown} and \cite{Rousseau}.
    
    \subsection{Definitions}
    If $W$ is a discrete reflection group on the Euclidean space with compact fundamental domain, then the reflection axes induce a cellular structure on the Euclidean space.
    The groups generated by reflections in the Euclidean plane along the sides of a triangle with angles $(\frac \pi 3, \frac \pi 3, \frac \pi 3)$, $(\frac \pi 2, \frac\pi 4, \frac \pi 4)$ or $(\frac \pi 2, \frac \pi 3, \frac \pi 6)$ induce simplicial structures and the resulting simplicial complexes are called model apartments of type $\tilde A_2$, $\tilde C_2$ and $\tilde G_2$ respectively. 

    \begin{definition}
        Let $X$ be a two-dimensional simplicial complex and call every simplicial subcomplex that is isomorphic to one of the model apartments an apartment. Then $X$ is an irreducible Euclidean building of dimension two if the following hold.
        \begin{enumerate}
            \item For every two simplices, there is an apartment containing both.
            \item For every two apartments, there is a simplicial isomorphism between both apartments that is the identity on the intersection.
            \item Every 1-simplex is contained in at least three 2-simplices.
        \end{enumerate}
    \end{definition}

    Note that the first two conditions in the definition assure that we can define the distance of two points in a building by measuring the distance in an enveloping apartment. This metric turns a Euclidean building into a $\CAT(0)$-space. In particular Euclidean buildings are contractible. If the maximal number of triangles that share an edge in $X$ is finite then $X$ is called locally finite. In that case the building is a locally compact space.

    We will be interested in groups that act geometrically on locally compact Euclidean buildings.

    \begin{definition}
        Let $X$ be a locally finite, two-dimensional Euclidean building and let $\Gamma \leq \Aut(X)$. Then $\Gamma$ is called a uniform lattice on $X$ if the action is cocompact and the stabilizers of simplices are finite.
    \end{definition}

    The previous definition originates from the notion of uniform lattices in locally compact groups.
        
    \begin{remark} 
        Given a locally compact group $G$ and a discrete subgroup $\Gamma$, then up to scaling there is a unique left-$G$-invariant regular Borel measure on the quotient $G/\Gamma$. If, with respect to this measure, $G/\Gamma$ has finite volume, then $\Gamma$ is called a lattice in $G$. If the quotient is compact, then $\Gamma$ is a uniform lattice.

        To connect this with isometric actions, let $X$ be a locally finite Euclidean building.
        Then the automorphism group $\Aut(X)$ can be equipped with the compact-open topology turning it into a locally compact group.
        Now, a group $\Gamma \leq \Aut(X)$ is as discrete subgroup if and only if stabilizers of simplices are finite.
        Moreover, if $\Aut(X)$ acts cocompactly on $X$, then we have the following equivalence: a discrete subgroup $\Gamma \leq \Isom(X)$ acts cocompactly on $X$ if and only if $G/\Gamma$ has finite volume.
    \end{remark}

    \subsection{The local approach}

    In the seventies Bruhat and Tits developed a method to associate a Euclidean building to a simple algebraic group over a field with non-Archimedean valuation \cite{BruhatTits1}, \cite{BruhatTits2}. The resulting buildings are called Bruhat-Tits building and are the classical examples of Euclidean buildings.
    Tits and Weiss showed that if a Euclidean building is of dimension at least three, then it is Bruhat-Tits \cite{Tits_classification, Weiss}.
    However, in dimension two, a Euclidean building might not be Bruhat-Tits, and we call these buildings exotic.
    Constructing exotic buildings (preferably with an automorphism group that acts cocompactly) is already a non-trivial task and there is no algebraic recipe to do so. 
    Several constructions of exotic buildings, including the ones in this article, are based on the fact that buildings can be recognized locally \cite{Tits}. We state the two-dimensional version of this in Proposition \ref{prop:local_approach} below.

    \begin{definition}
        A bipartite graph, is called a generalized $m$-gon if its diameter is $m$ and its girth is $2m$. Moreover, we require that every vertex has valency at least three.
    \end{definition}

    Generalized $m$-gons are precisely the spherical buildings of dimension 1. 
    Following standard terminology, we call the edges in a generalized $m$-gon chambers.
    Generalized $m$-gons only exist for $m \in \{2,3,4,6,8\}$ \cite{FeitHigman}. Generalized 2-gons are complete bipartite graphs and generalized 3-gons, 4-gons, 6-gons and 8-gons are called generalized triangles, generalized quadrangles, generalized hexagons and generalized octagons, respectively.

    \begin{proposition}\label{prop:local_approach}
        Let $X$ be a two-dimensional, simply connected simplicial complex.
        Assume there is a type function on $X$ that associates to each vertex one of three types. Assume that the links of vertices of type $i$ are generalized $m_i$-gons for the same $m_i$. Furthermore assume that $\sum_i \frac 1 {m_i} = 1$. Then $X$ is a Euclidean building.
    \end{proposition}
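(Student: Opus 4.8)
The plan is to recognize $X$ as a building by means of the ``local'' (chamber-system) characterization of buildings due to Tits \cite{Tits}, and then to observe that the relevant Coxeter group is a Euclidean reflection group, so that the building is Euclidean of dimension two. I would start by pinning down the combinatorial type: a generalized $m$-gon (a bipartite graph of diameter $m$ and girth $2m$) has $m\geq 2$ and, as one checks from the diameter/girth conditions, minimal degree at least $2$; hence $\sum_i \frac1{m_i}=1$ forces $(m_1,m_2,m_3)$ to be a permutation of $(3,3,3)$, $(2,4,4)$ or $(2,3,6)$. Put $I=\{1,2,3\}$ and let $M=(m_{ij})$ be the Coxeter matrix with $m_{ii}=1$ and $m_{ij}=m_k$ for $\{i,j,k\}=\{1,2,3\}$. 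In each of the three cases the Coxeter group $W(M)$ is the irreducible Euclidean reflection group of type $\tilde A_2$, $\tilde C_2$ or $\tilde G_2$ respectively, and its Coxeter complex is isomorphic to the model apartment of that type.

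Next I would pass to chamber systems. Because every vertex link is a connected generalized polygon --- in particular a connected, leafless bipartite graph whose two parts carry the two types distinct from that of the vertex --- one deduces that every vertex of $X$ lies in a triangle, every edge of $X$ lies in at least two triangles, and (since $X$, being simply connected, is connected) that $X$ is a gallery-connected, residually connected chamber complex whose maximal simplices are $2$-simplices. Hence $X$ is the flag complex of the chamber system $\mathcal C=\mathcal C(X)$ over $I$ whose chambers are the triangles of $X$, two triangles being declared $i$-adjacent when they share the edge opposite their type-$i$ vertices. For a chamber $c$ with type-$i$ vertex $v$, the rank-$2$ residue of $\mathcal C$ of cotype $i$ through $c$ is canonically identified with the chamber system of $\lk(v)$, which by hypothesis is a generalized $m_i$-gon, and $m_i=m_{jk}$. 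Thus $\mathcal C$ is connected, its panels have at least two chambers, and all of its rank-$2$ residues of type $\{j,k\}$ are generalized $m_{jk}$-gons; in short, $\mathcal C$ is a chamber system of type $M$.

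Finally I would invoke Tits's local characterization \cite{Tits}: a connected, simply connected chamber system of type $M$ (all rank-$2$ residues being the prescribed generalized polygons) is a building of type $M$. Simple-connectedness of $\mathcal C$ is equivalent to simple-connectedness of its flag complex $X$, which is part of the hypothesis, so $\mathcal C$ --- and hence $X$ --- is a building of type $M$. Since $W(M)$ is one of the Euclidean reflection groups above and its Coxeter complex is a genuine simplicial complex, the apartments of this building are exactly the subcomplexes of $X$ isomorphic to the model apartment of type $\tilde A_2$, $\tilde C_2$ or $\tilde G_2$, and the two axioms defining a two-dimensional Euclidean building follow from the building axioms for $\mathcal C$. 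I expect the only genuine work to lie in the middle step: verifying residual connectedness with care and matching up simple-connectedness of $X$ with that of $\mathcal C$ so that the hypotheses of \cite{Tits} apply verbatim --- the substantive content being already contained in \cite{Tits}.
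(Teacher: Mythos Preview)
Your proposal is correct and matches the paper's approach: the paper does not give a proof of this proposition at all but simply attributes it to Tits \cite{Tits}, and your sketch spells out exactly how one derives the statement from Tits's local characterization of buildings via chamber systems. The extra care you describe (pinning down the Coxeter type, passing to the chamber system, and matching simple-connectedness) is precisely what is implicit in invoking \cite{Tits}.
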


    We apply the proposition to triangles of groups.

    \begin{corollary}
        Let $T$ be a triangle of groups such that its local actions only involve generalized polygons and such that its angle sum is $\pi$. Then the development of $T$ is a Euclidean building of dimension 2.
    \end{corollary}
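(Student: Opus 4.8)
The plan is to verify that $X := D(T)$ meets the hypotheses of Proposition \ref{prop:local_approach} and then to quote that proposition.

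First, since the angle sum of $T$ equals $\pi$ it is in particular at most $\pi$, so $T$ is non-positively curved and hence developable by Proposition \ref{prop:dev_cat0}; thus $X = D(T)$ is defined. By construction it is a (pure) two-dimensional simplicial complex which, via the induced type function $f$, has each vertex assigned one of three types, and by the proposition recording that the development of a developable triangle of groups is connected and simply connected (a consequence of Theorem II.12.21 in \cite{BridsonHaefliger}), $X$ is simply connected. This settles the ambient-complex part of the hypotheses.

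The substantive step is to identify, for each $i$, the link of a type-$i$ vertex of $X$ with the graph $\Gamma_i$ underlying the local action of type $i$ of $T$. Since $\pi_1(T)$ acts transitively on the type-$i$ vertices (Observation \ref{obs:simplex_coset}), it suffices to treat the vertex $V_i$, i.e.\ the coset $1\cdot V_i$. Reading off Observation \ref{obs:simplex_coset}: the edges of $X$ incident to $V_i$ are precisely the cosets of the two edge groups $\varepsilon_{ji}(E_j), \varepsilon_{ki}(E_k) \leq V_i$ that lie in $V_i$, the triangles incident to $V_i$ are the cosets of the image of the face group that lie in $V_i$, and a triangle $vA \subseteq V_i$ joins the edges $v\,\varepsilon_{ji}(E_j)$ and $v\,\varepsilon_{ki}(E_k)$. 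Translating by elements of $V_i$ (which is the stabilizer of this vertex) identifies this incidence pattern with the coset graph $\Gamma_i$ of $V_i$ relative to $\varepsilon_{ji}(E_j)$ and $\varepsilon_{ki}(E_k)$ --- this is the local form of Proposition \ref{prop:uniqueness_dev}, and also follows from Proposition II.12.20 in \cite{BridsonHaefliger}. By hypothesis the local action of type $i$ involves a generalized polygon, so $\Gamma_i$ is a generalized $m_i$-gon with $\Girth(\Gamma_i) = 2m_i$; hence the link of every vertex of type $i$ is a generalized $m_i$-gon, the same for all vertices of a fixed type.

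Finally, the angle of type $i$ of $T$ is $\frac{2\pi}{\Girth(\Gamma_i)} = \frac{\pi}{m_i}$, so the assumption that the angle sum equals $\pi$ reads $\sum_i \frac{\pi}{m_i} = \pi$, i.e.\ $\sum_i \frac{1}{m_i} = 1$. Now all hypotheses of Proposition \ref{prop:local_approach} are in place and it yields that $X = D(T)$ is a two-dimensional Euclidean building, as claimed. The step where I expect to spend real care is the link identification --- in particular checking that distinct triangles at $V_i$ remain distinct and that the two families of edges at $V_i$ are disjoint, which is exactly where non-degeneracy of $T$ (properness of the edge-group inclusions in each vertex group) is used; the remainder is bookkeeping with the cited results.
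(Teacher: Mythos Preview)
Your proof is correct and follows exactly the route the paper intends: the paper gives no explicit proof for this corollary beyond the sentence ``We apply the proposition to triangles of groups,'' and you have carefully filled in precisely those details---developability from non-positive curvature, simple connectedness of the development, the identification of the link at a type-$i$ vertex with the coset graph $\Gamma_i$, and the translation of the angle-sum condition into $\sum_i 1/m_i = 1$---before invoking Proposition~\ref{prop:local_approach}. Your remark about non-degeneracy being what guarantees the link identification is faithful (no collapsing of edges or triangles) is also on point.
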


    \subsection{Action rigidity for building lattices}

    In this subsection we prove action rigidity for uniform lattices on Euclidean buildings. It is a consequence of quasi-isometric rigidity for Euclidean buildings. We will use action rigidity in Section \ref{sec:classification} to classify chamber-regular lattices up to group isomorphism.

    The following is a special case of \cite[Theorem III]{KramerWeiss14}.

    \begin{theorem}\label{thm:qi_rigidity}
        Let $X_1$ and $X_2$ be locally finite Euclidean buildings of dimension at least two. Let $f : X_1 \to X_2$ be a quasi-isometry. Then there
        exists unique isometry $\bar f:X_1 \to X_2$ that is at bounded distance from $f$.
    \end{theorem}

    As consequence we obtain action rigidity for Euclidean building lattices:

    \begin{proposition}\label{prop:action_rigidity}
        Let $\Gamma$ be an abstract group and let $X_1,X_2$ be localy finite Euclidean buildings of dimension at least two.
        Let $\phi_1 : \Gamma \to \Aut(X_1)$ and $\phi_2 : \Gamma \to \Aut(X_2)$ be injective maps, such that the images are uniform lattices on $X_1$ and $X_2$ respectively. Then the actions of $\phi_1(\Gamma)$ and $\phi_2(\Gamma)$ are isomorphic, i.e. there exists an isomorphism $\theta: X_1\to X_2$ such that $\theta \circ \phi_1(g) = \phi_2(g)\circ \theta$ for all $g \in \Gamma$.
    \end{proposition}

    \begin{proof}
        By the \v{S}varc--Milnor lemma, $\Gamma$ is quasi-isometric to both $X_1$ and $X_2$, so $X_1$ and $X_2$ are isomorphic by Theorem \ref{thm:qi_rigidity}. 
        So without loss of generality, we can assume that $X_1=X_2$ and denote the building just by $X$.
        Choose a base point $o \in X$. Define $\tilde \phi_i : \Gamma \to X, g \mapsto \phi_i(g)(o)$ for $i \in \{1,2\}$.
        We slightly perturb the image of $\tilde \phi_2$, such that it becomes injective and $\tilde \phi_2(g)$ stays close to $\phi_2(g)(o)$.
        Let $\kappa_2 : X\to \Gamma$ be a quasi-inverse such that $\kappa_2 \circ \tilde \phi_2 = \id_\Gamma$.
        Then $\tilde \phi_1 \circ \kappa_2 : X \to X$ is a quasi-isometry. By Theorem \ref{thm:qi_rigidity}, there is a unique $\theta\in\Aut(X)$ at finite distance $D$:
        \[
            d(\tilde \phi_1 \circ \kappa_2(x), \theta(x)) < D \quad \text{for all } x\in X.
        \]
        We claim that for all $g \in \Gamma$, we have that $\theta \circ \phi_2(g) = \phi_1(g) \circ\theta$. In particular the lattices $\phi_1(\Gamma)$ and $\phi_2(\Gamma)$ are conjugated in $\Aut(X)$.
        We show that $\theta \circ \phi_2(g)$ and $\phi_1(g) \circ \theta$ are at finite distance and therefore equal by Theorem \ref{thm:qi_rigidity}.
        Since $\phi_2(\Gamma).o$ is cobounded, it suffices to show that for all $h \in \Gamma$ the distances
        \[
            d((\theta \circ \phi_2(g)(\phi_2(h)(o)), (\phi_1(g)\circ \theta)(\phi_2(h)(o)))
        \]
        are uniformly bounded.
        We have 
        \[
            (\theta \circ \phi_2(g))(\phi_2(h)(o)) = \theta \circ \phi_2(gh)(o),
        \]
        and this point is at distance at most $D$ from the following point
        \[
            \tilde \phi_1 \circ \kappa_2 \circ \phi_2(gh)(o)
            = \tilde \phi_1\circ \kappa_2 \circ \tilde \phi_2(gh) 
            = \tilde \phi_1 (gh)
            = \phi_1(gh)(o).
        \]
        On the other hand, the point
        \[
            (\phi_1(g) \circ \theta)(\phi_2(h)(o))
        \]
        is at distance at most $D$ from 
        \begin{align*}
            \phi_1(g) \circ \tilde \phi_1 \circ \kappa_2 \circ \phi_2(h) (o)
            &= \phi_1(g) \circ \tilde \phi_1 \circ \kappa_2 \circ \tilde \phi_2(h)
            &&= \phi_1(g) \circ \tilde \phi_1(h) \\
            &= \phi_1(g) \circ \phi_1(h)(o)
            &&\hspace{0.2mm}= \phi_1(gh)(o).
        \end{align*}
    \end{proof}

    \begin{corollary}\label{cor:group_gives_triangles}
        Let $T, T'$ be developable triangles of groups with finite local groups and whose developements are Euclidean buildings.
        Let $\Gamma, \Gamma'$ be their fundamental groups. Then $T$ and $T'$ are isomorphic if and only if $\Gamma$ and $\Gamma'$ are isomorphic.
    \end{corollary}

    \begin{proof}
        It it clear that isomorphic triangles of groups induce isomorphic actions, in particular they have isomorphic fundamental groups.
        Now assume that the fundamental groups $\Gamma, \Gamma'$ are isomorphic. By the Propsotion \ref{prop:action_rigidity} the action on the corresponding buildings most be isomorphic, so the induced triangles are isomorphic as well.   
    \end{proof}

    \subsection{Recovering the action of a building lattice}

    In this section, we explain how the action of a chamber-regular lattice on a Euclidean building can be reconstructed from the group structure. As usual, we phrase our results in terms of triangles of groups.
    The results obtained in this subsection are not needed in the remainder of this article.

    The following lemma allows to detect vertex and edge stabilizers in fundamental groups by group-theoretic means. 
    While detecting vertex stabilizers is straightforward, detecting edge stabilizers requires additional assumptions.
    \begin{lemma}\label{lem:vertex_stab}
        Let $T$ be a non-positively curved triangle of groups with finite local groups.
        Let $\Gamma$ be the fundamental group and let $X$ be the development.
        Identify simplices of $X$ with cosets in $\Gamma$.
        \begin{enumerate}
            \item The map $xL \mapsto \Ad(x)(L)$ that associates to a vertex its stabilizer is a well-defined bijection from the set of vertices to the set $\mathcal V$ consisting of the maximal finite subgroups in~$\Gamma$.
            \item Assume that at least one of the following holds:
            \begin{enumerate}[label=(\roman*)]
                \item No edge group embeds as a proper subgroup in another edge group.
                \item The development is a Euclidean building.
            \end{enumerate}
            Then the map $xL \mapsto \Ad(x)(L)$ that associates to an edge its stabilizer is a well-defined surjective map from the set of edges to the set $\mathcal E$ which is the set of maximal elements in the following family.
            \[
                \bar{\mathcal E} := \{E \leq \Gamma \mid \text{there exists different $V_1 ,V_2 \in \mathcal V$ both containing $E$} \}.
            \]
        \end{enumerate}
    \end{lemma}

    \begin{proof}
        By Theorem \ref{prop:dev_cat0} the development $X$ can be equipped with a complete $\CAT(0)$-metric. Then by Theorem II.2.8 in \cite{BridsonHaefliger} any finite group in $\Gamma$ has a non-empty convex fix point set.
        But the stabilizers of points are conjugates of the local groups.
        In particular vertex stabilizers are precisely the maximal finite subgroups.
        We have shown that the first map is well-defined and surjective. To see that it is injective assume that two vertices have the same stabilizer. Then this stabilizer also fixes the geodesic between the vertices but this geodesic contains points which are not vertices. Stabilizers of such points are non-maximal finite subgroups, so this yields a contradiction.

        Clearly assigning to an edge its stabilizer is a well-defined map from the set of edges to $\bar {\mathcal E}$.
        Therefore, we need to show that every maximal element in $\bar {\mathcal E}$ is an edge stabilizer and that every edge stabilizer is a maximal element in $\bar {\mathcal E}$.
        Let $E \in \bar {\mathcal E}$, i.e. a subgroup contained in different vertex stabilizers.
        Then $E$ fixes a geodesic between the corresponding vertices, so it is contained in an edge stabilizer.
        This shows that the maximal elements in $\bar{\mathcal E}$ are all edge stabilizers.
        It remains to show that no edge stabilizer is contained in another edge stabilizer.
        If assumption (i) is satisfied this clearly cannot happen. So we assume that the development $X$ is a Euclidean building.
        If $X$ is of type $\tilde A_2$ then the edge groups are all of the same order, so assumption (i) is fulfilled.
        Now we assume that $X$ is of type $\tilde C_2$ or $\tilde G_2$.
        We make an observation about the fixed point set of an edge stabilizer:
        since interior points of chambers have a strictly smaller stabilizer than edges, the fixed point set of any edge stabilizer must be a tree supported on vertices and edges in the building $X$.
        Since the tree is convex in $X$, the angle between any pair of edges issuing from a vertex is $\pi$.
        Now assume there exist edge stabilizers $E \subsetneq E'$.
        Then the fixed tree of $E'$ is a proper subtree of the fixed tree of $E$. In particular there exist a vertex $x$ and two edges $e,e'$ both incident with $x$ such that $E$ and $E'$ are the stabilizers of $e$ and $e'$.
        In the building $X$, the edges $e$ and $e'$ must be of different type, otherwise their stabilizers would have the same order. If $X$ is of type $\tilde C_2$ this cannot happen, because edges of different types cannot form an angle of size $\pi$. 
        If $X$ is of type $\tilde G_2$ edges of different type can only form an angle of size $\pi$ if they intersect at a vertex whose link is an $A_2$-building. In that case the stabilizer of these edges must be of the same size.
    \end{proof}

    \begin{proposition}\label{prop:recover}
        Let $T$ be a developable triangle of groups with finite local groups, trivial face group and such that the development $X$ is a Euclidean building.
        Let $V_1, V_2, V_3$ be the vertex groups and let $\Gamma$ be the fundamental group.
        Let $\mathcal V$ and $\mathcal E$ be as in Lemma \ref{lem:vertex_stab}.     
        Then any 3-set $\{V_1',V_2',V_3'\} \subseteq \mathcal V$ satisfying the properties (i) to (iv) below is a conjugate of $\{V_1, V_2, V_3\}$. 
        \begin{enumerate}[label=(\roman*)]
            \item The groups $V_1', V_2', V_3'$ are pairwise not conjugate.
            \item The groups $V_i',V_j'$ intersect in an element $E_k' \in \mathcal E$ with $\{i,j,k\} = \{1,2,3\}$.
            \item The groups $E_i',E_j', E_k'$ are pairwise distinct.
            \item The groups $V_1',V_2', V_3'$ generate $\Gamma$.
        \end{enumerate}
        In particular, given $\Gamma$ as abstract group, one recovers the action on the building by constructing a triangle of groups out of any 3-set as above.
    \end{proposition}
    
    \begin{proof}        
        Let $\{V_1', V_2', V_3'\} \subseteq \mathcal V$ be a 3-set satisfying (i) - (iii).
        There are three vertices $x_1, x_2, x_3 \in X$ stabilized by $V_1', V_2', V_3'$ respectively.
        Since the groups $V_1', V_2', V_3'$ are pairwise not conjugate, the vertices $x_1, x_2, x_3$ are necessarily of three different types in the building $X$.
        Let $\{i,j,k\} = \{1,2,3\}$ and let $\gamma_k$ be the geodesic between $x_i$ and $x_j$.
        These geodesics $\gamma_1, \gamma_2, \gamma_3$ must be edge paths stabilized by the groups $E_1', E_2', E_3'$.
        The geodesics $\gamma_i, \gamma_j$ only intersect in the vertex stabilized $V_k'$, because otherwise there would be at least two vertices fixed by $\langle E_i', E_j'\rangle$ and since $E_i' \neq E_j'$, neither $E_i$ nor $E_j$ would be maximal among the groups that are contained in two vertex stabilizers.
        In particular none of the angles in the geodesic triangle $\Delta := \Delta(x_1, x_2, x_3)$ is 0.
        On the other hand $\Delta$ is a geodesic triangle in a Euclidean building whose sides are edge paths. In particular if none of the angles is 0, their sum must be $\pi$. Now we can apply the Flat Triangle Lemma \cite[Proposition II.2.9]{BridsonHaefliger} and deduce that $\Delta$ is isometric to its Euclidean comparison triangle. By \cite[Theorem VI.7.2]{Brown} the triangle $\Delta$ is contained in an apartment.

        Define a triangle of groups $T'$ as follows: its vertex groups and edge groups are $(V_i')_i$ and $(E_i')_i$, the face group is trivial, and the monomorphims in $T'$ are just the restricted identity maps.
        The local actions in $T'$ are the same local actions as in $T$. In particular, $T'$ induces a chamber-regular action on a Euclidean building by Proposition \ref{prop:local_approach}.
        Note that, the notation $V_i$ arises in two different contexts: as local groups in $\Gamma':=\pi_1(T')$ and as subgroups of $\Gamma$. To avoid ambiguity we denote the local groups in $\Gamma'$ by $\bar V_i'$.
        Let $\Phi$ be the canonical homomorphism from $\Gamma'$ to $\Gamma$.
        Since torsion in $\Gamma$ is bounded, $\Phi$ has infinite image and by the normal subgroup theorem for Euclidean building lattices \cite{LecureuxWitzel}, $\Phi$ is injective.
        Now assume that condition (iv) holds. Then $\Phi$ is an isomorphism and in particular $\Phi(\Gamma')$ acts as a uniform lattice on $X$. By Proposition \ref{prop:action_rigidity}, the actions $(\Gamma', D(T'))$ and $(\Phi(\Gamma), X)$ are isomorphic. A $\Phi$-equivariant isomorphism from $D(T')$ to $X$ will take the base chamber to a chamber in $X$. The groups $\bar V_i$ which are the stabilizers of the vertices in the base chamber in $D(T')$ are therefore mapped to the stabilizers of vertices in a chamber in $X$. 
        These vertices are $x_1,x_2,x_3$ so $\Delta$ is a chamber.
        Since $\Gamma$ acts chamber-regularly, the unique element that maps the base chamber in $X$ to $\Delta$ conjugates $\{ V_1, V_2, V_3\}$ to $\{V_1', V_2', V_3'\}$.
    \end{proof}
    
    \begin{figure}
        \centering
        \begin{tikzpicture}
            \coordinate (A) at (-3,0);        
            \coordinate (B) at (3,0);         
            \coordinate (C) at (0,-3);
            \coordinate (D) at (-1.8,0);
            \coordinate (E) at (-2.4,-0.6);
            \coordinate (F) at (1.8,0);
            \coordinate (G) at (2.4,-0.6);
            \coordinate (H) at (-0.6,-2.4);
            \coordinate (I) at (0.6,-2.4);
            \coordinate (J) at (-1.2, 0);
            \coordinate (K) at (-2.1,-0.9);
            \coordinate (L) at (1.2,0);
            \coordinate (M) at (2.1,-0.9);
            \coordinate (N) at (-0.9,-2.1);
            \coordinate (O) at (0.9,-2.1);
            \coordinate (P) at (-1.5, -0.3);
            \coordinate (Q) at (-1.8, -0.6);
            \coordinate (R) at (1.5, -0.3);
            \coordinate (S) at (1.8, -0.6);
            \coordinate (T) at (0.3, -2.1);
            \coordinate (U) at (-0.3, -2.1);
            \coordinate (V) at (-0.6, -2.1);
            \coordinate (W) at (0.6, -2.1);
            
            \draw[thick, RoyalBlue] (A) -- (D);
            \draw[thick, Red] (A) -- (E);
            \draw[thick, RoyalBlue] (B) -- (F);
            \draw[thick, ForestGreen] (B) -- (G);
            \draw[thick, Red] (C) -- (H);
            \draw[thick, ForestGreen] (C) -- (I);
            \draw[thick, ForestGreen] (D) -- (E);
            \draw[thick, Red] (F) -- (G);
            \draw[thick, RoyalBlue] (H) -- (I);
            \draw[thick, Red] (E) -- (K); 
            \draw[thick, RoyalBlue] (D) -- (J); 
            \draw[thick, RoyalBlue] (F) -- (L); 
            \draw[thick, ForestGreen] (G) -- (M); 
            \draw[thick, Red] (H) -- (N); 
            \draw[thick, ForestGreen] (I) -- (O); 
            \draw[thick, ForestGreen] (D) -- (P); 
            \draw[thick, RoyalBlue] (D) -- (Q); 
            \draw[thick, Red] (F) -- (R); 
            \draw[thick, RoyalBlue] (F) -- (S); 
            \draw[thick, Red] (H) -- (U); 
            \draw[thick, ForestGreen] (I) -- (T);
            \draw[thick, RoyalBlue] (H) -- (V);
            \draw[thick, RoyalBlue] (I) -- (W);
            \draw[thick, dashed, <->, shorten >=5mm, shorten <=5mm] (K) -- (N);
            \draw[thick, dashed, <->, shorten >=7mm, shorten <=7mm] (J) -- (L);
            \draw[thick, dashed, <->, shorten >=5mm, shorten <=5mm] (O) -- (M);
            \node[left] at (A) {$x_1$};
            \node[right] at (B) {$x_2$};
            \node[below] at (C) {$x_3$};            
            \draw[fill = Mulberry, thick] (A) circle (0.7mm);
            \draw[fill = BlueGreen, thick] (B) circle (0.7mm);
            \draw[fill = Dandelion, thick] (C) circle (0.7mm);
            \draw[fill = BlueGreen, thick] (D) circle (0.7mm);
            \draw[fill = Dandelion, thick] (E) circle (0.7mm);
            \draw[fill = Mulberry, thick] (F) circle (0.7mm);
            \draw[fill = Dandelion, thick] (G) circle (0.7mm);
            \draw[fill = Mulberry, thick] (H) circle (0.7mm);
            \draw[fill = BlueGreen, thick] (I) circle (0.7mm);
        \end{tikzpicture}
        \caption{The configuration of $x_1, x_2, x_3$ in the proof of Proposition \ref{prop:recover} in type $\tilde C_2$.}
        \label{fig:enter-label}
    \end{figure}
    
    \begin{remark}
        The assumptions in Lemma \ref{lem:vertex_stab} are necessary to deduce the second statement on edge stabilizers. One may consider the alternating group $V:= A_5$ and its subgroups $E=\langle(1 \; 2\;3)\rangle$ and $F:=\langle(1\;4\;5),(1\;4)(2\;3)\rangle$. Note that $E\cap F = 1$, $\langle E, F\rangle = A_5$ and that the element $g = (2 \; 4)(3 \; 5)$ conjugates $E$ to a subgroup of $F$. In particular, if $T$ is a triangle of groups with one local action being the one defined by $(V,E,F)$, then the stabilizer of the edge $gE \in D(T)$ in $\pi_1(T)$ is not maximal among the subgroups that embed in two vertex stabilizers since $\Ad(g)(E) \leq F$.
    \end{remark}

\section{The generalized quadrangle of order $(3,5)$}\label{sec:quadrangle_Q}
    In this section we define the generalized quadrangle $Q$ of order (3,5) and describe two chamber-regular action on $Q$.
    
    Up to this point, we have viewed generalized quadrangles as bipartite graphs of diameter four and girth eight, as this perspective is more natural when they arise as vertex links. However, it is also standard to regard them as point-line geometries, and the two viewpoints are essentially equivalent:

    Given a generalized quadrangle presented as a point-line geometry, its incidence graph is a bipartite graph of diameter four and girth eight. Conversely, starting from a generalized quadrangle given as a bipartite graph, one obtains a point-line geometry by declaring one vertex type to consist of points and the other of lines, with incidence defined by adjacency. Depending on which vertex type is chosen as the set of points, one recovers either the original geometry or its dual.

    In this section, and only in this section, we consider generalized quadrangles as point-line geometries, as this will be more convenient. Given a generalized quadrangle, we define its order $(s,t)$ to be the pair of integers such that each point is incident with exactly $t+1$ lines and each line is incident with exactly $s+1$ points. If $(s,t)$ is the order of a generalized quadrangle, then the valencies of the two vertex types in its incidence graph are $t+1$ and $s+1$, respectively.
    
    The generalized quadrangle $Q$ is a member of the family of slanted symplectic quadrangles, which are Payne-derivations of symplectic quadrangles; we refer to \cite{GrundhoferJoswigStroppel} for more details on this family of quadrangles. 
    We recall a well-known construction of the slanted symplectic quadrangle $W^\diamond(\FF)$ over the field~$\FF$:
    let $M(x,y,z) \in \SL_4(\FF)$ be the matrix defined below and let $S$ be the group consisting of elements of this form. 
    Let $X_0 = \{M(0,0,\mu) \mid \mu \in \FF\}$.
    Identify the projective line $\textbf{P}^1$ with 1-dimensional subspaces of $\FF^2$ and for each $\textbf{p} \in \textbf{P}^1$, choose a non-zero vector $(p_a,p_b)^T \in \textbf{p} \subseteq \FF^2$ to define the subgroup $X_{\textbf{p}} = \{M(\mu p_a, \mu p_b, 0) \mid \mu \in \FF\}$.
    Now the slanted symplectic quadrangle $W^\diamond(\FF)$ is the incidence geometry with points $S$ and lines being the left cosets of the subgroups $X_0$ and $X_{\textbf{p}}$.
    Note that $S$ acts point-regularly on $W^\diamond(\FF)$ by left multiplication. If $\FF$ is finite of order $q$ the quadrangle $W^\diamond(\FF)$ is of order $(q-1, q+1)$.
    \[
    M(x,y,z) :=
    \left(
        \begin{matrix}
            1 & -x & y & z \\
            0 & 1  & 0 & y \\
            0 & 0  & 1 & x \\
            0 & 0  & 0 & 1
        \end{matrix}
    \right).
    \]
    If $\FF$ is of order $q = 2^k$, then $S$ is elementary abelian of order $2^{3k}$.
    In particular this is the case for $q=4$ which yields the unique quadrangle $Q$ of order (3,5). 
    We fix $q=4$. Let $\alpha \in \FF_4$ be the element satisfying $\alpha^2+\alpha+1=1$. The assignment $\Phi$ indicated below extends to an isomorphism from $S$ (defined over $\FF_4$) to the vector space $V:= \FF_2^6$. We denote the standard basis with $(e_i)_i$.
    \begin{align*}
        \Phi:\hspace{-8mm} &&
        M(1,0,0)
        &\mapsto e_1, 
        &
        M(\alpha, 0,0)
        & \mapsto e_2,
        &
        M(0,1,0)
        & \mapsto e_3,\\
        && 
        M(0,\alpha,0)
        &\mapsto e_4,
        &
        M(0,0,1)
        &\mapsto e_5,
        &
        M(0,0,\alpha)
        &\mapsto e_6.
    \end{align*}
    The images of the subgroups $X_0$ and $X_{\textbf{p}}$ are the following subspaces.
    \begin{align*}
        U_1 &:= \langle e_5, e_6 \rangle_{\FF_2} && \hspace{-1.6cm}= \Phi \left(X_0\right) ,
        \\
        U_2 &:= \langle e_1, e_2 \rangle_{\FF_2} && \hspace{-1.6cm}= \Phi\left(X_{\langle(1,0)^T\rangle_{\FF_2}}\right) ,
        \\
        U_3 &:= \langle e_3, e_4 \rangle_{\FF_2} &&\hspace{-1.6cm}= \Phi\left(X_{\langle(0,1)^T\rangle_{\FF_2}}\right) ,
        \\
        U_4 &:= \langle e_1 + e_3 + e_5, e_2 + e_4 + e_5 + e_6\rangle_{\FF_2} &&\hspace{-1.6cm}= \Phi\left(X_{\langle(1,1)^T\rangle_{\FF_2}}\right) ,
        \\
        U_5 &:= \langle e_1 + e_4 + e_6, e_2 + e_3 + e_4 + e_5 \rangle_{\FF_2} &&\hspace{-1.6cm}= \Phi\left(X_{\langle(1,\alpha)^T\rangle_{\FF_2}}\right) ,
        \\
        U_6 &:= \langle e_2 + e_3 + e_6, e_1+e_2+e_4+e_5 \rangle_{\FF_2} &&\hspace{-1.6cm}= \Phi\left(X_{\langle(\alpha,1)^T\rangle_{\FF_2}}\right) .
        \end{align*}
        
    Now consider the matrices $A,B,C$ from Table \ref{tbl:matrices}. We make the following observation, which can be verified by hand or with code provided in \cite{ChamberRegularCode}.
    
    \begin{observation}
        \begin{enumerate}
            \item The matrix $A$ is of order six and $\langle A \rangle$ acts regularly (by left multiplication) on the subspaces $U_i$. In particular the affine group $V \rtimes \langle A \rangle$ acts regularly on the chambers of $Q$.
            \item The matrices $B$ and $C$ are of order two and three respectively and $\langle B,C \rangle \cong \Sym(3)$ acts regularly (by left multiplication) on the subspaces $U_i$. In particular the affine group $V \rtimes \langle B,C \rangle$ acts regularly on the chambers of~$Q$.
        \end{enumerate}
    \end{observation}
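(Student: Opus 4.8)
The plan is to reduce everything to one principle about affine actions on $Q$ and then to perform two short finite verifications using the matrices in Table \ref{tbl:matrices}. Under the identification $\Phi$, the points of $Q$ are the vectors of $V = \FF_2^6$, the lines are exactly the cosets $v + U_i$ with $v \in V$ and $1 \le i \le 6$, and the six lines through a point $v$ are $v + U_1, \dots, v + U_6$; hence the chambers of $Q$ are in bijection with $V \times \{1,\dots,6\}$ via $(v,i) \leftrightarrow (v,\, v+U_i)$. First I would record that any $M \in \operatorname{GL}(V)$ permuting the set $\{U_1,\dots,U_6\}$ is automatically an automorphism of the incidence geometry $Q$: it sends the line $v + U_i$ to $Mv + U_{\pi_M(i)}$, where $\pi_M$ is the induced permutation of $\{1,\dots,6\}$, and being linear and invertible it is a bijection on points and on lines preserving incidence. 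Consequently, for any subgroup $G \le \operatorname{GL}(V)$ stabilizing $\{U_1,\dots,U_6\}$, the affine group $V \rtimes G$ acts on $Q$ with $(w,M)$ sending the chamber $(v,i)$ to $(Mv+w,\,\pi_M(i))$. Since the translation part corrects the first coordinate uniquely once $M$ is fixed, $V \rtimes G$ acts regularly on the chambers of $Q$ if and only if $G$ acts regularly on the set $\{U_1,\dots,U_6\}$; this handles both ``in particular'' clauses simultaneously and reduces the two statements to claims about $\langle A\rangle$ and $\langle B,C\rangle$ as subgroups of $\operatorname{GL}(V)$.

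For part 1 I would check directly over $\FF_2$ (or with GAP \cite{GAP4}) that $A^6 = I$ and that $A U_i = U_{i+1}$, indices read modulo $6$. The second identity says $A$ permutes $\{U_1,\dots,U_6\}$ as the single $6$-cycle $(1\,2\,3\,4\,5\,6)$; this permutation has order $6$, so the order of $A$ is a multiple of $6$, and with $A^6 = I$ it follows that $A$ has order exactly $6$. A cyclic group acting on six points as a $6$-cycle acts freely and transitively, i.e.\ regularly, so part 1 follows from the reduction above.

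For part 2 I would first verify the relations $B^2 = I$, $C^3 = I$, $B \ne I \ne C$ and $(BC)^2 = I$. Since $C$ has order $3$ and $B$ has order $2$, the coset $B\langle C\rangle$ is disjoint from $\langle C\rangle$, so $|\langle B,C\rangle| \ge 6$; on the other hand $\langle b,c \mid b^2, c^3, (bc)^2\rangle$ presents a group of order $6$, namely $\Sym(3)$, so $\langle B,C\rangle \cong \Sym(3)$. Next I would compute the permutations of $\{U_1,\dots,U_6\}$ induced by $B$ and by $C$ and check that the resulting order-$6$ subgroup of $\Sym(\{1,\dots,6\})$ is transitive and has trivial stabilizer of $U_1$. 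A group of order $6$ acting transitively and freely on a six-element set acts regularly, so part 2 again follows from the reduction.

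I do not expect a genuine conceptual difficulty: the substance of the observation is a verification with explicit $6 \times 6$ matrices over $\FF_2$ together with the elementary bookkeeping above, and the quadrangle $Q$ itself has already been set up. The one point to be careful about is the step from transitivity to \emph{regularity} of the induced permutation group on the six subspaces; this is why I would explicitly track that $\langle A\rangle$ acts as a full $6$-cycle and that the point stabilizers in $\langle B,C\rangle$ are trivial, rather than only verifying transitivity.
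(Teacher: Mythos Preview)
Your proposal is correct and is exactly the natural unpacking of what the paper records only as an ``observation'' with no proof: the paper simply states the claim and moves on, implicitly leaving the finite checks (orders of $A,B,C$, the $\Sym(3)$ relations, and the induced permutation of $U_1,\dots,U_6$) to the reader or to a computer. Your reduction of chamber-regularity of $V\rtimes G$ to regularity of $G$ on $\{U_1,\dots,U_6\}$ is the right piece of bookkeeping and matches how $Q$ has been set up; the only caveat is that the specific permutation $AU_i=U_{i+1}$ you propose to verify may come out as some other $6$-cycle depending on the chosen numbering, but your argument only needs that it is a $6$-cycle, so this does not affect the validity.
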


    \begin{remark}
        The groups $V \rtimes \langle A \rangle$ and $V \rtimes \langle B,C \rangle$ are isomorphic to the groups $L_7$ and $L_{11}$ from Table \ref{tbl:presentations_quadrangle} in Appendix \ref{app:presentations}.
    \end{remark}
    
    \begin{table}\label{tbl:matrices}
    \begin{align*}
        A &:= \begin{pmatrix}
          0 & 0 & 1 & 1 & 1 & 1 \\
          0 & 0 & 0 & 1 & 1 & 0 \\
          1 & 1 & 1 & 1 & 1 & 0 \\
          0 & 1 & 0 & 1 & 0 & 1 \\
          0 & 0 & 1 & 0 & 1 & 0 \\
          0 & 0 & 0 & 1 & 1 & 1
        \end{pmatrix},
        &
        B &:= \begin{pmatrix}
          1 & 0 & 1 & 1 & 0 & 1 \\
          1 & 1 & 0 & 1 & 1 & 1 \\
          1 & 1 & 1 & 1 & 1 & 0 \\
          0 & 1 & 0 & 1 & 0 & 1 \\
          1 & 1 & 1 & 0 & 0 & 1 \\
          1 & 0 & 0 & 1 & 1 & 0
        \end{pmatrix},\\
        C &:= \begin{pmatrix}
          1 & 1 & 0 & 1 & 0 & 0 \\
          1 & 0 & 1 & 1 & 0 & 0 \\
          1 & 1 & 1 & 1 & 1 & 0 \\
          1 & 0 & 1 & 0 & 1 & 1 \\
          0 & 1 & 1 & 1 & 0 & 0 \\
          1 & 0 & 0 & 1 & 0 & 0
        \end{pmatrix},
        &
        D &:= \begin{pmatrix}
            0 & 1 & 0 & 0 & 0 & 0 \\
            1 & 1 & 0 & 0 & 0 & 0 \\
            0 & 0 & 0 & 1 & 0 & 0 \\
            0 & 0 & 1 & 1 & 0 & 0 \\
            0 & 0 & 0 & 0 & 1 & 1 \\
            0 & 0 & 0 & 0 & 1 & 0
        \end{pmatrix}.
    \end{align*}
        \caption{Matrices over $\FF_2$.}
        \label{tbl:matrices}
    \end{table}

    The following proposition was checked with a computer \cite{ChamberRegularCode}.

    \begin{proposition}
        \begin{enumerate}
            \item 
            The affine group $V \rtimes \langle A,C\rangle$ is the full automorphism group of $Q$. In particular the automorphism group of $Q$ contains a normal subgroup acting regularly on the points.
            \item
            Up to conjugacy there are two chamber-regular subgroups of $\Aut(Q)$ that contain the normal subgroup $V$. These are exactly the groups from the previuos observation.
            \item
            Since $\langle A,C\rangle$ permutes the subspaces $U_1, \dots, U_6$ we get a homomorphism~$\Psi$ from $\langle A, C\rangle$ to $\Sym(6)$. The kernel is cyclic of order three and generated by the matrix~$D$ indicated in Table \ref{tbl:matrices}. The images of $A$ and $C$ under $\Psi$ are $(1,5,2,3,4,6)$ and $(1,3,5)(2,4,6)$ respectively. In particular $\Psi$ is surjective and we have the following (non-split) exact sequence.
            \[
                1 \rightarrow C_3 \cong \langle D \rangle \rightarrow \langle A,C\rangle \rightarrow \Sym(6) \rightarrow 1.
            \]
        \end{enumerate}
    \end{proposition}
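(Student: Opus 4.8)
The plan is to establish all three items by explicit, finite computation — which is what ``checked with a computer'' refers to — so below I describe how the verification is organized and isolate the one or two points that carry real content.

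\emph{Item (1).} Since $\Phi$ identifies $S$ with the additive group $V=\FF_2^6$ and $S$ acts point‑regularly on $Q$ by left multiplication, $V$ is a point‑regular subgroup of $\Aut(Q)$ with base point $0$, and the six lines through $0$ are exactly $U_1,\dots,U_6$. Because every line of $Q$ is an additive coset $v+U_i$, any $g\in\mathrm{GL}(V)$ permuting the set $\{U_1,\dots,U_6\}$ preserves the relation ``$w\in v+U_i$'' and hence is a collineation of $Q$ fixing $0$; one reads off from Table~\ref{tbl:matrices} that $A$ and $C$ permute $\{U_1,\dots,U_6\}$ (this is Item~(3)), so $\langle A,C\rangle\le\Stab_{\Aut(Q)}(0)$, and since $V$ acts freely on points the product $V\rtimes\langle A,C\rangle\le\Aut(Q)$ is internal. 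For the reverse inclusion I would compute $|\Aut(Q)|$: the incidence graph of $Q$ has $64+96$ vertices, the two parts have different sizes so no graph automorphism swaps them and $\Aut$ of the incidence graph equals $\Aut(Q)$; a finite automorphism‑group computation (or the published data on the GQ of order $(3,5)$, see \cite{DixmierZara},\cite{PayneThas}) gives $|\Aut(Q)|=64\cdot 2160=|V\rtimes\langle A,C\rangle|$, forcing equality. The ``in particular'' is then immediate: $\langle A,C\rangle$ normalizes $V$ and $V$ is transitive on points.

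\emph{Item (3).} From Table~\ref{tbl:matrices} one checks that $D\in\langle A,C\rangle$ (write $D$ as a word in $A,C$), that $D$ has order $3$, and that $D(U_i)=U_i$ for all $i$: for $i\in\{1,2,3\}$ this is clear as $D$ is block‑diagonal with respect to $\langle e_1,e_2\rangle,\langle e_3,e_4\rangle,\langle e_5,e_6\rangle$, and for $i\in\{4,5,6\}$ it is a one‑line linear‑algebra check; hence $\langle D\rangle\le\ker\Psi$. Computing $A(U_i)$ and $C(U_i)$ gives $\Psi(A)=(1,5,2,3,4,6)$ and $\Psi(C)=(1,3,5)(2,4,6)$. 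The subgroup $\langle\Psi(A),\Psi(C)\rangle\le\Sym(6)$ is transitive (the first generator is a $6$‑cycle) and primitive (neither the size‑$2$ nor the size‑$3$ block system of $\Psi(A)$ is $\Psi(C)$‑invariant), and it contains a transposition, e.g.\ a suitable power of $\Psi(A)\Psi(C)$ has cycle type $2\cdot 1^4$; a transitive primitive group containing a transposition is the full symmetric group, so $\Psi$ is onto. Therefore $|\ker\Psi|=|\langle A,C\rangle|/|\Sym(6)|=2160/720=3$ and $\ker\Psi=\langle D\rangle$. Finally, non‑splitness is the assertion that $\langle A,C\rangle$ contains no subgroup isomorphic to $\Sym(6)$ (no complement to $\langle D\rangle$); this I would verify directly in \cite{GAP4}, by inspecting the conjugacy classes of subgroups of order $720$.

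\emph{Item (2).} Let $G$ satisfy $V\trianglelefteq G\le\Aut(Q)=V\rtimes\langle A,C\rangle$. By the modular law $G=V\rtimes H$ with $H:=G\cap\langle A,C\rangle$, and $G$ acts transitively on chambers iff $H=\Stab_G(0)$ is transitive on the six lines $U_1,\dots,U_6$ through $0$; since $Q$ has $384$ chambers, $G$ is moreover regular on chambers iff $|G|=384$, i.e.\ $|H|=6$ — and then $H$ is automatically regular on $\{U_1,\dots,U_6\}$. Regularity forces $\Psi|_H$ injective, so $H\cap\langle D\rangle=1$ and $\Psi(H)$ is a regular subgroup of $\Sym(6)$, hence $H\cong C_6$ or $H\cong\Sym(3)$. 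I would then enumerate the order‑$6$ subgroups $H\le\langle A,C\rangle$ with $H\cap\langle D\rangle=1$ and $\Psi(H)$ transitive, form the groups $V\rtimes H$, and sort them into $\Aut(Q)$‑conjugacy classes; the search through a group of order $2160$ yields exactly two classes, represented by $V\rtimes\langle A\rangle$ and $V\rtimes\langle B,C\rangle$, which are non‑conjugate already because $\langle A\rangle\cong C_6\not\cong\Sym(3)\cong\langle B,C\rangle$. That both classes occur is precisely the preceding Observation, which exhibits $\langle A\rangle$ and $\langle B,C\rangle$ acting regularly on $\{U_1,\dots,U_6\}$.

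The only steps that are not routine bookkeeping with $6\times 6$ matrices over $\FF_2$ are the upper bound $|\Aut(Q)|\le 138240$ in (1) and the non‑splitness in (3); both are genuinely finite verifications (or follow from published data on the unique GQ of order $(3,5)$), while the matrix identities, the permutations $\Psi(A),\Psi(C)$, and the enumeration of the candidate subgroups $H$ in (2) are mechanical once the setup above is in place.
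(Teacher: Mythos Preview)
Your proposal is correct and aligns with the paper's own ``proof,'' which consists solely of the sentence ``The following proposition was checked with a computer.'' You have in fact gone further than the paper by organizing the verification conceptually---identifying $\Stab_{\Aut(Q)}(0)$ with the linear stabilizer of $\{U_1,\dots,U_6\}$, reducing Item~(2) to a search for order-$6$ subgroups of $\langle A,C\rangle$ meeting $\langle D\rangle$ trivially, and isolating the two genuinely nontrivial machine checks (the order of $\Aut(Q)$ and non-splitness)---so there is nothing to correct.
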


    \begin{remark}
        We point out that the quadrangle $Q$ admits many more regular actions than the ones we discuss in the previous discussion. In fact (up to conjugacy in the automorphism group) there are 58 groups acting regular on the points, six groups acting regular on the lines, and 11 groups acting regular on the chambers of $Q$. Several of the point- and line-regular actions arise as subactions of the chamber-regular ones.
        In Appendix \ref{app:presentations} we provide explicit presentations for the 11 chamber-regular groups.
    \end{remark}
    \begin{figure}[H]
    \centering
    \begin{align*}
    \begin{tikzpicture}[scale=0.5]
        \def\Radiusa{0.5cm}
        \def\Radiusb{1cm}
        \def\Radiusc{2.3cm}
        \def\Radiusd{3.5cm}
        \def\Radiuse{4.5cm}
        \def\Radiusf{5.5cm}
        \def\Radiush{6.5cm}
        \def\Radiusg{7.5cm}
            \coordinate (1a) at (270:\Radiusa);
            \coordinate (14a) at (270:\Radiusb);
            \coordinate (2a) at (250:\Radiusc);
            \coordinate (7a) at (290:\Radiusc);
            \coordinate (3a) at (250:\Radiusd);
            \coordinate (8a) at (290:\Radiusd);
            \coordinate (12a) at ($(3a)!0.5!(8a)$);
            \coordinate (4a) at (250:\Radiuse);
            \coordinate (9a) at (290:\Radiuse);
            \coordinate (13a) at ($(4a)!0.5!(9a)$);
            \coordinate (5a) at (250:\Radiusf);
            \coordinate (10a) at (290:\Radiusf);
            \coordinate (15a) at ($(5a)!0.5!(10a)$);
            \coordinate (5aa) at (250:\Radiush);
            \coordinate (6a) at (250:\Radiusg);
            \coordinate (11a) at (290:\Radiusg);
            \coordinate (16a) at ($(6a)!0.5!(11a)$);
            \coordinate (15aa) at ($(15a)!0.5!(16a)$);
            \coordinate (1b) at (210:\Radiusa);
            \coordinate (14b) at (210:\Radiusb);
            \coordinate (2b) at (190:\Radiusc);
            \coordinate (7b) at (230:\Radiusc);
            \coordinate (3b) at (190:\Radiusd);
            \coordinate (8b) at (230:\Radiusd);
            \coordinate (12b) at ($(3b)!0.5!(8b)$);
            \coordinate (4b) at (190:\Radiuse);
            \coordinate (9b) at (230:\Radiuse);
            \coordinate (13b) at ($(4b)!0.5!(9b)$);
            \coordinate (5b) at (190:\Radiusf);
            \coordinate (10b) at (230:\Radiusf);
            \coordinate (15b) at ($(5b)!0.5!(10b)$);
            \coordinate (5bb) at (190:\Radiush);
            \coordinate (6b) at (190:\Radiusg);
            \coordinate (11b) at (230:\Radiusg);
            \coordinate (16b) at ($(6b)!0.5!(11b)$);
            \coordinate (15bb) at ($(15b)!0.5!(16b)$);
            \coordinate (1c) at (150:\Radiusa);
            \coordinate (14c) at (150:\Radiusb);
            \coordinate (2c) at (130:\Radiusc);
            \coordinate (7c) at (170:\Radiusc);
            \coordinate (3c) at (130:\Radiusd);
            \coordinate (8c) at (170:\Radiusd);
            \coordinate (12c) at ($(3c)!0.5!(8c)$);
            \coordinate (4c) at (130:\Radiuse);
            \coordinate (9c) at (170:\Radiuse);
            \coordinate (13c) at ($(4c)!0.5!(9c)$);
            \coordinate (5c) at (130:\Radiusf);
            \coordinate (10c) at (170:\Radiusf);
            \coordinate (15c) at ($(5c)!0.5!(10c)$);
            \coordinate (5cc) at (130:\Radiush);
            \coordinate (6c) at (130:\Radiusg);
            \coordinate (11c) at (170:\Radiusg);
            \coordinate (16c) at ($(6c)!0.5!(11c)$);
            \coordinate (15cc) at ($(15c)!0.5!(16c)$);
            \coordinate (1d) at (90:\Radiusa);
            \coordinate (14d) at (90:\Radiusb);
            \coordinate (2d) at (70:\Radiusc);
            \coordinate (7d) at (110:\Radiusc);
            \coordinate (3d) at (70:\Radiusd);
            \coordinate (8d) at (110:\Radiusd);
            \coordinate (12d) at ($(3d)!0.5!(8d)$);
            \coordinate (4d) at (70:\Radiuse);
            \coordinate (9d) at (110:\Radiuse);
            \coordinate (13d) at ($(4d)!0.5!(9d)$);
            \coordinate (5d) at (70:\Radiusf);
            \coordinate (10d) at (110:\Radiusf);
            \coordinate (15d) at ($(5d)!0.5!(10d)$);
            \coordinate (5dd) at (70:\Radiush);
            \coordinate (6d) at (70:\Radiusg);
            \coordinate (11d) at (110:\Radiusg);
            \coordinate (16d) at ($(6d)!0.5!(11d)$);
            \coordinate (15dd) at ($(15d)!0.5!(16d)$);
            \coordinate (1e) at (30:\Radiusa);
            \coordinate (14e) at (30:\Radiusb);
            \coordinate (2e) at (10:\Radiusc);
            \coordinate (7e) at (50:\Radiusc);
            \coordinate (3e) at (10:\Radiusd);
            \coordinate (8e) at (50:\Radiusd);
            \coordinate (12e) at ($(3e)!0.5!(8e)$);
            \coordinate (4e) at (10:\Radiuse);
            \coordinate (9e) at (50:\Radiuse);
            \coordinate (13e) at ($(4e)!0.5!(9e)$);
            \coordinate (5e) at (10:\Radiusf);
            \coordinate (10e) at (50:\Radiusf);
            \coordinate (15e) at ($(5e)!0.5!(10e)$);
            \coordinate (5ee) at (10:\Radiush);
            \coordinate (6e) at (10:\Radiusg);
            \coordinate (11e) at (50:\Radiusg);
            \coordinate (16e) at ($(6e)!0.5!(11e)$);
            \coordinate (15ee) at ($(15e)!0.5!(16e)$);
            \coordinate (1f) at (330:\Radiusa);
            \coordinate (14f) at (330:\Radiusb);
            \coordinate (2f) at (310:\Radiusc);
            \coordinate (7f) at (350:\Radiusc);
            \coordinate (3f) at (310:\Radiusd);
            \coordinate (8f) at (350:\Radiusd);
            \coordinate (12f) at ($(3f)!0.5!(8f)$);
            \coordinate (4f) at (310:\Radiuse);
            \coordinate (9f) at (350:\Radiuse);
            \coordinate (13f) at ($(4f)!0.5!(9f)$);
            \coordinate (5f) at (310:\Radiusf);
            \coordinate (10f) at (350:\Radiusf);
            \coordinate (15f) at ($(5f)!0.5!(10f)$);
            \coordinate (5ff) at (310:\Radiush);
            \coordinate (6f) at (310:\Radiusg);
            \coordinate (11f) at (350:\Radiusg);
            \coordinate (16f) at ($(11f)!0.5!(6f)$);
            \coordinate (15ff) at ($(15f)!0.5!(16f)$);
               \draw[line width=0.2mm, radius=\Radiusa,  color =black] circle;
               \draw[line width=0.2mm, radius=\Radiusb,  color=black] circle;  
                 \draw[line width = 0.2mm, color = black] (1a) -- (2a);
                 \draw[line width = 0.2mm, color = black] (2a) -- (3a);
                 \draw[line width = 0.2mm, color = black] (3a) -- (4a);
                 \draw[line width = 0.2mm, color = black] (4a) -- (5a);
                 \draw[line width = 0.2mm, color = black] (5a) -- (6a);
                 \draw[line width = 0.2mm, color = black] (1a) -- (7a);
                 \draw[line width = 0.2mm, color = black] (7a) -- (8a);
                 \draw[line width = 0.2mm, color = black] (8a) -- (9a);
                 \draw[line width = 0.2mm, color = black] (9a) -- (10a);
                 \draw[line width = 0.2mm, color = black] (10a) -- (11a);
                 \draw[line width = 0.2mm, color = black] (1a) -- (14a);
                 \draw[line width = 0.2mm, color = black] (14a) -- (12a);
                 \draw[line width = 0.2mm, color = black] (12a) -- (13a);
                 \draw[line width = 0.2mm, color = black] (13a) -- (15a);
                 \draw[line width = 0.2mm, color = black] (15a) -- (16a);
                \draw[line width = 0.2mm, color = black] (1b) -- (2b);
                \draw[line width = 0.2mm, color = black] (2b) -- (3b);
                \draw[line width = 0.2mm, color = black] (3b) -- (4b);
                \draw[line width = 0.2mm, color = black] (4b) -- (5b);
                \draw[line width = 0.2mm, color = black] (5b) -- (6b);
                \draw[line width = 0.2mm, color = black] (1b) -- (7b);
                \draw[line width = 0.2mm, color = black] (7b) -- (8b);
                \draw[line width = 0.2mm, color = black] (8b) -- (9b);
                \draw[line width = 0.2mm, color = black] (9b) -- (10b);
                \draw[line width = 0.2mm, color = black] (10b) -- (11b);
                \draw[line width = 0.2mm, color = black] (1b) -- (14b);
                \draw[line width = 0.2mm, color = black] (14b) -- (12b);
                \draw[line width = 0.2mm, color = black] (12b) -- (13b);
                \draw[line width = 0.2mm, color = black] (13b) -- (15b);
                \draw[line width = 0.2mm, color = black] (15b) -- (16b);
                \draw[line width = 0.2mm, color = black] (1c) -- (2c);
                \draw[line width = 0.2mm, color = black] (2c) -- (3c);
                \draw[line width = 0.2mm, color = black] (3c) -- (4c);
                \draw[line width = 0.2mm, color = black] (4c) -- (5c);
                \draw[line width = 0.2mm, color = black] (5c) -- (6c);
                \draw[line width = 0.2mm, color = black] (1c) -- (7c);
                \draw[line width = 0.2mm, color = black] (7c) -- (8c);
                \draw[line width = 0.2mm, color = black] (8c) -- (9c);
                \draw[line width = 0.2mm, color = black] (9c) -- (10c);
                \draw[line width = 0.2mm, color = black] (10c) -- (11c);
                \draw[line width = 0.2mm, color = black] (1c) -- (14c);
                \draw[line width = 0.2mm, color = black] (14c) -- (12c);
                \draw[line width = 0.2mm, color = black] (12c) -- (13c);
                \draw[line width = 0.2mm, color = black] (13c) -- (15c);
                \draw[line width = 0.2mm, color = black] (15c) -- (16c);
                \draw[line width = 0.2mm, color = black] (1d) -- (2d);
                \draw[line width = 0.2mm, color = black] (2d) -- (3d);
                \draw[line width = 0.2mm, color = black] (3d) -- (4d);
                \draw[line width = 0.2mm, color = black] (4d) -- (5d);
                \draw[line width = 0.2mm, color = black] (5d) -- (6d);
                \draw[line width = 0.2mm, color = black] (1d) -- (7d);
                \draw[line width = 0.2mm, color = black] (7d) -- (8d);
                \draw[line width = 0.2mm, color = black] (8d) -- (9d);
                \draw[line width = 0.2mm, color = black] (9d) -- (10d);
                \draw[line width = 0.2mm, color = black] (10d) -- (11d);
                \draw[line width = 0.2mm, color = black] (1d) -- (14d);
                \draw[line width = 0.2mm, color = black] (14d) -- (12d);
                \draw[line width = 0.2mm, color = black] (12d) -- (13d);
                \draw[line width = 0.2mm, color = black] (13d) -- (15d);
                \draw[line width = 0.2mm, color = black] (15d) -- (16d);
                \draw[line width = 0.2mm, color = black] (1e) -- (2e);
                \draw[line width = 0.2mm, color = black] (2e) -- (3e);
                \draw[line width = 0.2mm, color = black] (3e) -- (4e);
                \draw[line width = 0.2mm, color = black] (4e) -- (5e);
                \draw[line width = 0.2mm, color = black] (5e) -- (6e);
                \draw[line width = 0.2mm, color = black] (1e) -- (7e);
                \draw[line width = 0.2mm, color = black] (7e) -- (8e);
                \draw[line width = 0.2mm, color = black] (8e) -- (9e);
                \draw[line width = 0.2mm, color = black] (9e) -- (10e);
                \draw[line width = 0.2mm, color = black] (10e) -- (11e);
                \draw[line width = 0.2mm, color = black] (1e) -- (14e);
                \draw[line width = 0.2mm, color = black] (14e) -- (12e);
                \draw[line width = 0.2mm, color = black] (12e) -- (13e);
                \draw[line width = 0.2mm, color = black] (13e) -- (15e);
                \draw[line width = 0.2mm, color = black] (15e) -- (16e);  
                \draw[line width = 0.2mm, color = black] (1f) -- (2f);
                \draw[line width = 0.2mm, color = black] (2f) -- (3f);
                \draw[line width = 0.2mm, color = black] (3f) -- (4f);
                \draw[line width = 0.2mm, color = black] (4f) -- (5f);
                \draw[line width = 0.2mm, color = black] (5f) -- (6f);
                \draw[line width = 0.2mm, color = black] (1f) -- (7f);
                \draw[line width = 0.2mm, color = black] (7f) -- (8f);
                \draw[line width = 0.2mm, color = black] (8f) -- (9f);
                \draw[line width = 0.2mm, color = black] (9f) -- (10f);
                \draw[line width = 0.2mm, color = black] (10f) -- (11f);
                \draw[line width = 0.2mm, color = black] (1f) -- (14f);
                \draw[line width = 0.2mm, color = black] (14f) -- (12f);
                \draw[line width = 0.2mm, color = black] (12f) -- (13f);
                \draw[line width = 0.2mm, color = black] (13f) -- (15f);
                \draw[line width = 0.2mm, color = black] (15f) -- (16f);    
                \draw[line width= 0.2mm, bend left=65, color = farbe2]( 2a) to (5b);
                \draw[line width = 0.2mm, bend left=90, color = farbe2] (5b) to (4c);
                \draw[line width = 0.2mm, bend right=40, color = farbe2] (4c) to (16d);
                \draw[line width = 0.2mm, bend left=90, color = farbe2] (16d) to (13e);
                \draw[line width = 0.2mm, bend right=10, color = farbe2] (13e) to (3f); 
                \draw[line width= 0.2mm, bend left=65, color = farbe15]( 2b) to (5c);
                \draw[line width = 0.2mm, bend left=90, color = farbe15] (5c) to (4d);
                \draw[line width = 0.2mm, bend right=40, color = farbe15] (4d) to (16e);
                \draw[line width = 0.2mm, bend left=90, color = farbe15] (16e) to (13f);
                \draw[line width = 0.2mm, bend right=10, color = farbe15] (13f) to (3a);
                \draw[line width= 0.2mm, bend left=65, color = farbe2]( 2c) to (5d);
                \draw[line width = 0.2mm, bend left=90, color = farbe2] (5d) to (4e);
                \draw[line width = 0.2mm, bend right=40, color = farbe2] (4e) to (16f);
                \draw[line width = 0.2mm, bend left=90, color = farbe2] (16f) to (13a);
                \draw[line width = 0.2mm, bend right=10, color = farbe2] (13a) to (3b);
                \draw[line width= 0.2mm, bend left=65, color = farbe15]( 2d) to (5e);
                \draw[line width = 0.2mm, bend left=90, color = farbe15] (5e) to (4f);
                \draw[line width = 0.2mm, bend right=40, color = farbe15] (4f) to (16a);
                \draw[line width = 0.2mm, bend left=90, color = farbe15] (16a) to (13b);
                \draw[line width = 0.2mm, bend right=10, color = farbe15] (13b) to (3c);
                \draw[line width= 0.2mm, bend left=65, color = farbe2]( 2e) to (5f);
                \draw[line width = 0.2mm, bend left=90, color = farbe2] (5f) to (4a);
                \draw[line width = 0.2mm, bend right=40, color = farbe2] (4a) to (16b);
                \draw[line width = 0.2mm, bend left=90, color = farbe2] (16b) to (13c);
                \draw[line width = 0.2mm, bend right=10, color = farbe2] (13c) to (3d);
                \draw[line width= 0.2mm, bend left=65, color = farbe15]( 2f) to (5a);
                \draw[line width = 0.2mm, bend left=90, color = farbe15] (5a) to (4b);
                \draw[line width = 0.2mm, bend right=40, color = farbe15] (4b) to (16c);
                \draw[line width = 0.2mm, bend left=90, color = farbe15] (16c) to (13d);
                \draw[line width = 0.2mm, bend right=10, color = farbe15] (13d) to (3e);
                \draw[line width= 0.2mm, bend right=5, color = farbe5]( 14a) to (8b);
                \draw[line width = 0.2mm, bend right=25, color = farbe5] (8b) to (4c);
                \draw[line width = 0.2mm, bend right=40, color = farbe5] (4c) to (6d);
                \draw[line width = 0.2mm, bend left=10, color = farbe5] (6d) to (3e);
                \draw[line width = 0.2mm, bend right=10, color = farbe5] (3e) to (9f);
                \draw[line width= 0.2mm, bend right=5, color = farbe11]( 14b) to (8c);
                \draw[line width = 0.2mm, bend right=25, color = farbe11] (8c) to (4d);
                \draw[line width = 0.2mm, bend right=40, color = farbe11] (4d) to (6e);
                \draw[line width = 0.2mm, bend left=10, color = farbe11] (6e) to (3f);
                \draw[line width = 0.2mm, bend right=10, color = farbe11] (3f) to (9a);
                \draw[line width= 0.2mm, bend right=5, color = farbe5]( 14c) to (8d);
                \draw[line width = 0.2mm, bend right=25, color = farbe5] (8d) to (4e);
                \draw[line width = 0.2mm, bend right=40, color = farbe5] (4e) to (6f);
                \draw[line width = 0.2mm, bend left=10, color = farbe5] (6f) to (3a);
                \draw[line width = 0.2mm, bend right=10, color = farbe5] (3a) to (9b);
                \draw[line width= 0.2mm, bend right=5, color = farbe11]( 14d) to (8e);
                \draw[line width = 0.2mm, bend right=25, color = farbe11] (8e) to (4f);
                \draw[line width = 0.2mm, bend right=40, color = farbe11] (4f) to (6a);
                \draw[line width = 0.2mm, bend left=10, color = farbe11] (6a) to (3b);
                \draw[line width = 0.2mm, bend right=10, color = farbe11] (3b) to (9c);
                \draw[line width= 0.2mm, bend right=5, color = farbe5]( 14e) to (8f);
                \draw[line width = 0.2mm, bend right=25, color = farbe5] (8f) to (4a);
                \draw[line width = 0.2mm, bend right=40, color = farbe5] (4a) to (6b);
                \draw[line width = 0.2mm, bend left=10, color = farbe5] (6b) to (3c);
                \draw[line width = 0.2mm, bend right=10, color = farbe5] (3c) to (9d);
                \draw[line width= 0.2mm, bend right=5, color = farbe11]( 14f) to (8a);
                \draw[line width = 0.2mm, bend right=25, color = farbe11] (8a) to (4b);
                \draw[line width = 0.2mm, bend right=40, color = farbe11] (4b) to (6c);
                \draw[line width = 0.2mm, bend left=10, color = farbe11] (6c) to (3d);
                \draw[line width = 0.2mm, bend right=10, color = farbe11] (3d) to (9e);
                %
                \draw[line width= 0.2mm, bend left=45, color = dunkelblau]( 11a) to (13b);
                \draw[line width = 0.2mm, bend left=70, color = dunkelblau] (13b) to (16c);
                \draw[line width = 0.2mm, bend left=45, color = dunkelblau] (16c) to (11d);
                \draw[line width = 0.2mm, bend left=45, color = dunkelblau] (11d) to (13e);
                \draw[line width = 0.2mm, bend left=70, color = dunkelblau ] (13e) to (16f);
                \draw[line width = 0.2mm, bend left=45, color = dunkelblau] (16f) to (11a);
                \draw[line width= 0.2mm, bend left=45, color = dunkelblau]( 11b) to (13c);
                \draw[line width = 0.2mm, bend left=70, color = dunkelblau] (13c) to (16d);
                \draw[line width = 0.2mm, bend left=45, color = dunkelblau] (16d) to (11e);
                \draw[line width = 0.2mm, bend left=45, color = dunkelblau] (11e) to (13f);
                \draw[line width = 0.2mm, bend left=70, color = dunkelblau ] (13f) to (16a);
                \draw[line width = 0.2mm, bend left=45, color = dunkelblau] (16a) to (11b);
                \draw[line width= 0.2mm, bend left=45, color = dunkelblau]( 11c) to (13d);
                \draw[line width = 0.2mm, bend left=70, color = dunkelblau] (13d) to (16e);
                \draw[line width = 0.2mm, bend left=45, color = dunkelblau] (16e) to (11f);
                \draw[line width = 0.2mm, bend left=45, color = dunkelblau] (11f) to (13a);
                \draw[line width = 0.2mm, bend left=70, color = dunkelblau ] (13a) to (16b);
                \draw[line width = 0.2mm, bend left=45, color = dunkelblau] (16b) to (11c);
                \draw[line width= 0.2mm, bend right=5, color = farbe9]( 2a) to (15b);
                \draw[line width = 0.2mm, bend right=25, color = farbe9] (15b) to (12c);
                \draw[line width = 0.2mm, bend left=0, color = farbe9] (12c) to (5d);
                \draw[line width = 0.2mm, bend left=10, color = farbe9] (5d) to (7e);
                \draw[line width = 0.2mm, bend right=10, color = farbe9] (7e) to (10f);
                \draw[line width= 0.2mm, bend right=5, color = farbe7]( 2b) to (15c);
                \draw[line width = 0.2mm, bend right=25, color = farbe7] (15c) to (12d);
                \draw[line width = 0.2mm, bend left=0, color = farbe7] (12d) to (5e);
                \draw[line width = 0.2mm, bend left=10, color = farbe7] (5e) to (7f);
                \draw[line width = 0.2mm, bend right=10, color = farbe7] (7f) to (10a);
                \draw[line width= 0.2mm, bend right=5, color = farbe9]( 2c) to (15d);
                \draw[line width = 0.2mm, bend right=25, color = farbe9] (15d) to (12e);
                \draw[line width = 0.2mm, bend left=0, color = farbe9] (12e) to (5f);
                \draw[line width = 0.2mm, bend left=10, color = farbe9] (5f) to (7a);
                \draw[line width = 0.2mm, bend right=10, color = farbe9] (7a) to (10b);
                \draw[line width= 0.2mm, bend right=5, color = farbe7]( 2d) to (15e);
                \draw[line width = 0.2mm, bend right=25, color = farbe7] (15e) to (12f);
                \draw[line width = 0.2mm, bend left=0, color = farbe7] (12f) to (5a);
                \draw[line width = 0.2mm, bend left=10, color = farbe7] (5a) to (7b);
                \draw[line width = 0.2mm, bend right=10, color = farbe7] (7b) to (10c);
                \draw[line width= 0.2mm, bend right=5, color = farbe9]( 2e) to (15f);
                \draw[line width = 0.2mm, bend right=25, color = farbe9] (15f) to (12a);
                \draw[line width = 0.2mm, bend left=0, color = farbe9] (12a) to (5b);
                \draw[line width = 0.2mm, bend left=10, color = farbe9] (5b) to (7c);
                \draw[line width = 0.2mm, bend right=10, color = farbe9] (7c) to (10d);
                \draw[line width= 0.2mm, bend right=5, color = farbe7]( 2f) to (15a);
                \draw[line width = 0.2mm, bend right=25, color = farbe7] (15a) to (12b);
                \draw[line width = 0.2mm, bend left=0, color = farbe7] (12b) to (5c);
                \draw[line width = 0.2mm, bend left=10, color = farbe7] (5c) to (7d);
                \draw[line width = 0.2mm, bend right=10, color = farbe7] (7d) to (10e);
                \draw[line width= 0.2mm, bend left=45, color = BlueGreen]( 6a) to (11b);
                \draw[line width = 0.2mm, bend left=70, color = BlueGreen] (11b) to (6c);
                \draw[line width = 0.2mm, bend left=45, color = BlueGreen] (6c) to (11d);
                \draw[line width = 0.2mm, bend left=70, color = BlueGreen] (11d) to (6e);
                \draw[line width = 0.2mm, bend left=45, color = BlueGreen ] (6e) to (11f);
                \draw[line width= 0.2mm, bend left=70, color = BlueGreen](11f) to (6a);
                \draw[line width= 0.2mm, bend left=45, color = dunkelblau] (6b) to (11c);
                \draw[line width = 0.2mm, bend left=70, color = dunkelblau] (11c) to (6d);
                \draw[line width = 0.2mm, bend left=45, color = dunkelblau] (6d) to (11e);
                \draw[line width = 0.2mm, bend left=70, color = dunkelblau] (11e) to (6f);
                \draw[line width = 0.2mm, bend left=45, color = dunkelblau] (6f) to (11a);
                \draw[line width= 0.2mm, bend left=70, color = dunkelblau] (11a) to (6b);
                \draw[line width= 0.2mm, bend left=5, color = burgunder](14a) to (2b);
                \draw[line width = 0.2mm, bend left=25, color = burgunder] (2b) to (10c);
                \draw[line width = 0.2mm, bend left=25, color = burgunder] (10c) to (11d);
                \draw[line width = 0.2mm, bend left=90, color = burgunder] (11d) to (7e);
                \draw[line width = 0.2mm, bend right=10, color = burgunder] (7e) to (5f);
                \draw[line width= 0.2mm, bend left=5, color = burgunder] (14b) to (2c);
                \draw[line width = 0.2mm, bend left=25, color = burgunder] (2c) to (10d);
                \draw[line width = 0.2mm, bend left=25, color = burgunder] (10d) to (11e);
                \draw[line width = 0.2mm, bend left=90, color = burgunder] (11e) to (7f);
                \draw[line width = 0.2mm, bend right=10, color = burgunder] (7f) to (5a);
                \draw[line width= 0.2mm, bend left=5, color = burgunder] (14c) to (2d);
                \draw[line width = 0.2mm, bend left=25, color = burgunder] (2d) to (10e);
                \draw[line width = 0.2mm, bend left=25, color = burgunder] (10e) to (11f);
                \draw[line width = 0.2mm, bend left=90, color = burgunder] (11f) to (7a);
                \draw[line width = 0.2mm, bend right=10, color = burgunder] (7a) to (5b);
                \draw[line width= 0.2mm, bend left=5, color = burgunder] (14d) to (2e);
                \draw[line width = 0.2mm, bend left=25, color = burgunder] (2e) to (10f);
                \draw[line width = 0.2mm, bend left=25, color = burgunder] (10f) to (11a);
                \draw[line width = 0.2mm, bend left=90, color = burgunder] (11a) to (7b);
                \draw[line width = 0.2mm, bend right=10, color = burgunder] (7b) to (5c);
                \draw[line width= 0.2mm, bend left=5, color = burgunder] (14e) to (2f);
                \draw[line width = 0.2mm, bend left=25, color = burgunder] (2f) to (10a);
                \draw[line width = 0.2mm, bend left=25, color = burgunder] (10a) to (11b);
                \draw[line width = 0.2mm, bend left=90, color = burgunder] (11b) to (7c);
                \draw[line width = 0.2mm, bend right=10, color = burgunder] (7c) to (5d);
                \draw[line width= 0.2mm, bend left=5, color = burgunder] (14f) to (2a);
                \draw[line width = 0.2mm, bend left=25, color = burgunder] (2a) to (10b);
                \draw[line width = 0.2mm, bend left=25, color = burgunder] (10b) to (11c);
                \draw[line width = 0.2mm, bend left=90, color = burgunder] (11c) to (7d);
                \draw[line width = 0.2mm, bend right=10, color = burgunder] (7d) to (5e);
                \draw[line width= 0.2mm, bend left=30, color = farbe14] (15a) to (5aa);
                \draw[line width= 0.2mm, bend left=30, color = farbe14] (5aa) to (9b);
                \draw[line width = 0.2mm, bend left=50, color = farbe14] (9b) to (9c);
                \draw[line width = 0.2mm, bend left=60, color = farbe14] (9c) to (3d);
                \draw[line width = 0.2mm, bend left=30, color = farbe14] (3d) to (13e);
                \draw[line width = 0.2mm, bend left=10, color = farbe14] (13e) to (7f);
                \draw[line width= 0.2mm, bend left=30, color = farbe13] (15b) to (5bb);
                \draw[line width= 0.2mm, bend left=30, color = farbe13] (5bb) to (9c);
                \draw[line width = 0.2mm, bend left=50, color = farbe13] (9c) to (9d);
                \draw[line width = 0.2mm, bend left=60, color = farbe13] (9d) to (3e);
                \draw[line width = 0.2mm, bend left=30, color = farbe13] (3e) to (13f);
                \draw[line width = 0.2mm, bend left=10, color = farbe13] (13f) to (7a);
                \draw[line width= 0.2mm, bend left=30, color = farbe14] (15c) to (5cc);
                \draw[line width= 0.2mm, bend left=30, color = farbe14] (5cc) to (9d);
                \draw[line width = 0.2mm, bend left=50, color = farbe14] (9d) to (9e);
                \draw[line width = 0.2mm, bend left=60, color = farbe14] (9e) to (3f);
                \draw[line width = 0.2mm, bend left=30, color = farbe14] (3f) to (13a);
                \draw[line width = 0.2mm, bend left=10, color = farbe14] (13a) to (7b);
                \draw[line width= 0.2mm, bend left=30, color = farbe13] (15d) to (5dd);
                \draw[line width= 0.2mm, bend left=30, color = farbe13] (5dd) to (9e);
                \draw[line width = 0.2mm, bend left=50, color = farbe13] (9e) to (9f);
                \draw[line width = 0.2mm, bend left=60, color = farbe13] (9f) to (3a);
                \draw[line width = 0.2mm, bend left=30, color = farbe13] (3a) to (13b);
                \draw[line width = 0.2mm, bend left=10, color = farbe13] (13b) to (7c);
                \draw[line width= 0.2mm, bend left=30, color = farbe14] (15e) to (5ee);
                \draw[line width= 0.2mm, bend left=30, color = farbe14] (5ee) to (9f);
                \draw[line width = 0.2mm, bend left=50, color = farbe14] (9f) to (9a);
                \draw[line width = 0.2mm, bend left=60, color = farbe14] (9a) to (3b);
                \draw[line width = 0.2mm, bend left=30, color = farbe14] (3b) to (13c);
                \draw[line width = 0.2mm, bend left=10, color = farbe14] (13c) to (7d);
                \draw[line width= 0.2mm, bend left=30, color = farbe13] (15f) to (5ff);
                \draw[line width= 0.2mm, bend left=30, color = farbe13] (5ff) to (9a);
                \draw[line width = 0.2mm, bend left=50, color = farbe13] (9a) to (9b);
                \draw[line width = 0.2mm, bend left=60, color = farbe13] (9b) to (3c);
                \draw[line width = 0.2mm, bend left=30, color = farbe13] (3c) to (13d);
                \draw[line width = 0.2mm, bend left=10, color = farbe13] (13d) to (7e);
                \draw[line width= 0.2mm, bend left=20, color = farbe6] (4a) to (8b);
                \draw[line width = 0.2mm, bend left=80, color = farbe6] (8b) to (8c);
                \draw[line width = 0.2mm, bend right=45, color = farbe6] (8c) to (12d);
                \draw[line width = 0.2mm, bend left=70, color = farbe6] (12d) to (10e);
                \draw[line width = 0.2mm, bend left=40, color = farbe6] (10e) to (16f);
                \draw[line width= 0.2mm, bend left=20, color = farbe10] (4b) to (8c);
                \draw[line width = 0.2mm, bend left=80, color = farbe10] (8c) to (8d);
                \draw[line width = 0.2mm, bend right=45, color = farbe10] (8d) to (12e);
                \draw[line width = 0.2mm, bend left=70, color = farbe10] (12e) to (10f);
                \draw[line width = 0.2mm, bend left=40, color = farbe10] (10f) to (16a);
                \draw[line width= 0.2mm, bend left=20, color = farbe6] (4c) to (8d);
                \draw[line width = 0.2mm, bend left=80, color = farbe6] (8d) to (8e);
                 \draw[line width = 0.2mm, bend right=45, color = farbe6] (8e) to (12f);
                \draw[line width = 0.2mm, bend left=70, color = farbe6] (12f) to (10a);
                \draw[line width = 0.2mm, bend left=40, color = farbe6] (10a) to (16b);
                \draw[line width= 0.2mm, bend left=20, color = farbe10] (4d) to (8e);
                \draw[line width = 0.2mm, bend left=80, color = farbe10] (8e) to (8f);
                \draw[line width = 0.2mm, bend right=45, color = farbe10] (8f) to (12a);
                \draw[line width = 0.2mm, bend left=70, color = farbe10] (12a) to (10b);
                \draw[line width = 0.2mm, bend left=40, color = farbe10] (10b) to (16c);
                \draw[line width= 0.2mm, bend left=20, color = farbe6] (4e) to (8f);
                \draw[line width = 0.2mm, bend left=80, color = farbe6] (8f) to (8a);
                \draw[line width = 0.2mm, bend right=45, color = farbe6] (8a) to (12b);
                \draw[line width = 0.2mm, bend left=70, color = farbe6] (12b) to (10c);
                \draw[line width = 0.2mm, bend left=40, color = farbe6] (10c) to (16d);
                \draw[line width= 0.2mm, bend left=20, color = farbe10] (4f) to (8a);
                \draw[line width = 0.2mm, bend left=80, color = farbe10] (8a) to (8b);
                \draw[line width = 0.2mm, bend right=45, color = farbe10] (8b) to (12c);
                \draw[line width = 0.2mm, bend left=70, color = farbe10] (12c) to (10d);
                \draw[line width = 0.2mm, bend left=40, color = farbe10] (10d) to (16e);
                \draw[line width= 0.2mm, bend left=20, color = farbe12] (6a) to (12b);
                \draw[line width = 0.2mm, bend left=20, color = farbe12] (12b) to (15c);
                \draw[line width = 0.2mm, bend left=80, color = farbe12] (15c) to (6d);
                \draw[line width = 0.2mm, bend left=20, color = farbe12] (6d) to (12e);
                \draw[line width = 0.2mm, bend left=20, color = farbe12] (12e) to (15f);
                \draw[line width = 0.2mm, bend left=80, color = farbe12] (15f) to (6a);  
                \draw[line width= 0.2mm, bend left=20, color = farbe12] (6b) to (12c);
                \draw[line width = 0.2mm, bend left=20, color = farbe12] (12c) to (15d);
                \draw[line width = 0.2mm, bend left=80, color = farbe12] (15d) to (6e);
                \draw[line width = 0.2mm, bend left=20, color = farbe12] (6e) to (12f);
                \draw[line width = 0.2mm, bend left=20, color = farbe12] (12f) to (15a);
                \draw[line width = 0.2mm, bend left=80, color = farbe12] (15a) to (6b);
                \draw[line width= 0.2mm, bend left=20, color = farbe12] (6c) to (12d);
                \draw[line width = 0.2mm, bend left=20, color = farbe12] (12d) to (15e);
                \draw[line width = 0.2mm, bend left=80, color = farbe12] (15e) to (6f);
                \draw[line width = 0.2mm, bend left=20, color = farbe12] (6f) to (12a);
                \draw[line width = 0.2mm, bend left=20, color = farbe12] (12a) to (15b);
                \draw[line width = 0.2mm, bend left=80, color = farbe12] (15b) to (6c);
              \foreach \point in {1a, 1b, 1c, 1d, 1e, 1f, 2a, 2b, 2c, 2d, 2e, 2f, 3a, 3b, 3c, 3d, 3e, 3f, 4a, 4b, 4c, 4d, 4e, 4f, 5a, 5b, 5c, 5d, 5e, 5f, 6a, 6b, 6c, 6d, 6e, 6f, 7a, 7b, 7c, 7d, 7e, 7f, 8a, 8b, 8c, 8d, 8e, 8f, 9a, 9b, 9c, 9d, 9e, 9f, 10a, 10b, 10c, 10d, 10e, 10f, 11a, 11b, 11c, 11d, 11e, 11f, 12a, 12b, 12c, 12d, 12e, 12f, 13a, 13b, 13c, 13d, 13e, 13f, 14a, 14b, 14c, 14d, 14e, 14f, 15a, 15b, 15c, 15d, 15e, 15f, 16a, 16b, 16c, 16d, 16e, 16f} {
                \draw[fill=black, thick] (\point) circle (0.7mm);
              } 
        \end{tikzpicture}
            \end{align*}
            \caption{The generalized quadrangle of order (5,3), the dual of $Q$.}\label{ganzes duales Viereck}
        \end{figure}

\section{Chamber-regular actions on $\tilde C_2$-buildings}\label{sec:classification}

    In this section we classify lattices acting type-preservingly and chamber-regularly on $\tilde C_2$-buildings whose links of special vertices are the unique generalized quadrangle $Q$ of order (3,5). 
    Such an action $(\Gamma, X)$ arises from a triangle of groups and let $T$ be a triangle of groups arising from such an action.
    Since the action $(\Gamma, X)$ is regular on chambers, $T$ has trivial face group and in particular the local actions are edge-regular.
    In fact, the local actions in $T$ are isomorphic to the actions of vertex stabilizers in $\Gamma$ on the corresponding vertex links.
    Since $X$ is a $\tilde C_2$-building, two of the local actions are chamber-regular actions on~$Q$, and the third local action must be a chamber-regular action on a complete bipartite graph. Since the valencies in $Q$ are four and six, the complete bipartite graph is either $K_{4,4}$ or $K_{6,6}$.
    
    Now assume that $((V_i, \Lambda_i))_i$ is a triple consisting of edge-regular actions, and moreover, assume that two of the three local actions are on $Q$ and the third one is on $K_{4,4}$ or $K_{6,6}$.
    For each action $(V_i, \Lambda_i)$ choose subgroups $E^i_A, E^i_B$ such that the action of $V_i$ on the associated coset graph is isomorphic to the initial action.
    Assume that $((V_i, E^i_A, E^i_B))_i$ is compatible with respect to a matching $(f_i)_i$.
    Let $T$ be a triangle of groups with local action datum $((V_i, E^i_A, E^i_B))_i$ matched along $(f_i)_i$. Then $T$ is non-positively curved and by Proposition \ref{prop:local_approach} its fundamental group is a type-preserving, chamber-regular lattice on a $\tilde C_2$-building.

    Therefore, we can classify the triangles of groups that give rise to actions fulfilling our assumptions by proceedings as follows.
    We list all possible local actions and work out which ones form compatible triples with respect to some matching. 
    Then for each compatible triple $((V_i, E^i_A, E^i_B))_i$ with respect to the matching $(f_i)_i$, we count the number of isomorphism classes of triangles of groups with local action datum $((V_i, E^i_A, E^i_B))_i$ along the matching $(f_i)_i$.
    
    The Lemmas \ref{lem:reg_actions_on_Q}, \ref{lem:reg_actions_on_K44}, \ref{lem:reg_actions_on_K66} below capture all chamber-regular actions on the quadrangle $Q$ and the complete bipartite graphs $K_{4,4}$ and $K_{6,6}$.
    The Tables \ref{tbl:presentations_quadrangle}, \ref{tbl:K44}, \ref{tbl:K66} these lemmas are referring to contain group presentations with generators labeled by $a_i$ and $b_i$. The groups $F_A := \langle a_i \rangle$ and $F_B := \langle b_i \rangle$ encode the action as follows: they intersect trivially and generate the group defined by the presentation. The associated coset graph is either $Q$, $K_{4,4}$ or $K_{6,6}$ depending on the table. In each case the groups $F_A$ and $F_B$ are of order four or six. If one of them is cyclic, say $F_A$, the presentation only contains the generator $a_1$ and otherwise we have generators $a_1, a_2$. Similarly, we may have only the generator $b_1$ or two generators $b_1, b_2$ of the second type.
    The tables have been calculated with a computer \cite{ChamberRegularCode}.

    \begin{example}
        The following presentations encode chamber-regular actions on $Q$ and $K_{4,4}$ respectively. These are the local actions in the triangle of groups from Example \ref{exp:presentation_lattice}.
        \begin{align*}
            L_1 :=
            \big \langle
            a_1,b_1 \; \big | \; &
            a_1^4,
            \quad b_1^6,
            \quad a_1 b_1^{-1} a_1^{-1} b_1^2 a_1^{-1} b_1 a_1^{-2} b_1,
            \quad a_1 b_1 a_1^{-2} b_1^{-1} a_1^{-1} b_1 a_1^{-2} b_1^{-1},\\
            & a_1 b_1 a_1 b_1^{-1} a_1 b_1^{-2} a_1^{-1} b_1^2,
            \big \rangle,\\[2mm]
            L_{12}:=
            \big \langle
            a_1,b_1 \; \big | \; &
            a_1^4, \quad
            b_1^4, \quad
            a_1 b_1 a_1^{-1} b_1^{-1}
            \big \rangle.
        \end{align*}
    \end{example}
    
    \begin{lemma}\label{lem:reg_actions_on_Q}
        Let $Q$ be the unique generalized quadrangle of order (3,5). Let $G$ be a group of automorphisms of $Q$ that is regular on the edges. Then the action $(G, Q)$ is isomorphic to the action of one of the groups $L_1,\dots, L_{11}$, indicated in Table \ref{tbl:presentations_quadrangle} in Appendix \ref{app:presentations}, on its coset graph with respect to the subgroups $\langle a_i \rangle$ of order four and $\langle b_i \rangle$ of order six. Furthermore, these actions are pairwise not isomorphic.
    \end{lemma}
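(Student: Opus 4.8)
\emph{Reduction.} The statement is a finite computation over the explicitly known group $\Aut(Q)$, and the plan is to phrase it as a classification of conjugacy classes of edge-regular subgroups. First I would record that $Q$ has $64$ points, $96$ lines and hence $384$ flags, and that by the description above $\Aut(Q)=V\rtimes\langle A,C\rangle$ has order $138240$ and is realised inside $\mathrm{AGL}_6(2)$. A group of automorphisms regular on the edges of (the incidence graph of) $Q$ is thus a subgroup $G\le\Aut(Q)$ with $|G|=384$; since this equals the number of flags, for such a $G$ the properties \emph{free}, \emph{transitive} and \emph{regular} on the flag set are all equivalent. Moreover, isomorphism classes of edge-regular actions on $Q$ correspond bijectively to $\Aut(Q)$-conjugacy classes of edge-regular subgroups: the automorphism group of the incidence graph of $Q$ is exactly $\Aut(Q)$, because a graph automorphism either fixes or swaps the two halves of the bipartition and it cannot swap them since $Q$ has order $(3,5)$ with $3\neq5$; hence an isomorphism between two regular actions $(G,Q)$ and $(G',Q)$ is exactly an element of $\Aut(Q)$ conjugating $G$ to $G'$, and conversely such a conjugation is an isomorphism of actions.

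\emph{Normal form and enumeration.} Next I would replace the classification of order-$384$ subgroups by a localised search. As $\Aut(Q)$ is flag-transitive (it contains the flag-regular group $V\rtimes\langle A\rangle$ of the observation above), fix one flag $(p_0,\ell_0)$ of $Q$; then every edge-regular $G$ is $\Aut(Q)$-conjugate to one of the form $\langle P,L\rangle$ with $P\le\Aut(Q)_{p_0}$ of order $6$, $L\le\Aut(Q)_{\ell_0}$ of order $4$, $P\cap L=1$, and $\langle P,L\rangle$ of order $384$ transitive on flags. That these are precisely the edge-regular subgroups uses the standard fact that for a group acting edge-transitively on a connected bipartite graph the stabilisers of two adjacent vertices generate the group, together with the orders $|G_{p_0}|=6$, $|G_{\ell_0}|=4$ and $G_{p_0}\cap G_{\ell_0}=G_{(p_0,\ell_0)}=1$ forced by regularity. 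So it suffices to enumerate such pairs $(P,L)$ up to conjugacy by the flag stabiliser $\Aut(Q)_{(p_0,\ell_0)}$ of order $360$. I would run this in GAP on the permutation representation of $\Aut(Q)$ on its $64$ points: this only involves order-$6$ subgroups of a group of order $2160$, order-$4$ subgroups of a group of order $1440$, the test $P\cap L=1$ together with $|\langle P,L\rangle|=384$ and flag-transitivity, and a final reduction modulo the flag stabiliser. The output is exactly $11$ classes, two of which are $V\rtimes\langle A\rangle$ and $V\rtimes\langle B,C\rangle$.

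\emph{Identifying the actions and distinctness.} For the $i$-th representative $G$ I would then compute a finite presentation of $G$ together with the point- and line-stabilisers $\langle a_i\rangle:=G_{p_0}$ and $\langle b_i\rangle:=G_{\ell_0}$ (the subgroups appearing in Table \ref{tbl:presentations_quadrangle} of Appendix \ref{app:presentations}), and observe that $g\langle a_i\rangle\mapsto g p_0$ and $g\langle b_i\rangle\mapsto g\ell_0$ define a $G$-equivariant graph isomorphism from the coset graph $\Gamma_{G,\langle a_i\rangle,\langle b_i\rangle}$ onto the incidence graph of $Q$; this is the asserted description of the action. Pairwise non-isomorphism of the $11$ actions is then immediate from the correspondence in the first paragraph, since the $11$ representatives are chosen pairwise non-conjugate in $\Aut(Q)$ — and note that this is a statement about the actions, consistent with some of the $L_i$ being abstractly isomorphic as groups.

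\emph{Main obstacle.} Everything above is mechanical once the search is set up, so the real content is completeness: certifying that no edge-regular subgroup is missed. Attacking this by classifying all order-$384$ subgroups of $\Aut(Q)$ up to conjugacy directly is unpleasant because $\Aut(Q)$ is non-solvable of order $138240$ (its only non-abelian composition factor is $A_6$); the normal-form search at a fixed flag is precisely what keeps the computation routine, and the step requiring the most care is the reduction of the resulting list of pairs $(P,L)$ modulo flag-stabiliser conjugacy, so that the $11$ classes are neither under- nor over-counted.
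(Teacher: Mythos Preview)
Your plan is sound and, in fact, far more detailed than anything the paper provides: the paper offers no argument for this lemma beyond the blanket sentence ``The tables these lemmas are referring to have been calculated with a computer'' preceding the three classification lemmas. Your reduction to $\Aut(Q)$-conjugacy classes of order-$384$ subgroups, the normal form at a fixed flag via the pair $(P,L)$ of vertex stabilisers, and the final coset-graph identification are exactly how one would structure the GAP computation; the correspondence between isomorphism classes of actions and conjugacy classes of subgroups is justified correctly (including the observation that the incidence graph has no type-swapping automorphisms since $3\neq 5$).

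Two minor remarks. First, you have the roles of $\langle a_i\rangle$ and $\langle b_i\rangle$ reversed relative to the paper's tables: inspection of Table~\ref{tbl:presentations_quadrangle} shows that $\langle a_i\rangle$ always has order~$4$ (so is the line-stabiliser) and $\langle b_i\rangle$ has order~$6$ (so is the point-stabiliser). This is purely notational and does not affect the argument. Second, your closing hedge ``consistent with some of the $L_i$ being abstractly isomorphic as groups'' is unnecessary here: the paper records in the subsequent Remark that $L_1,\dots,L_{11}$ are in fact pairwise non-isomorphic as abstract groups (in contrast to the $K_{4,4}$ and $K_{6,6}$ cases). Your argument is of course still correct without relying on this.
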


    \begin{lemma}\label{lem:reg_actions_on_K44}
        Let $K_{4,4}$ be the complete bipartite graph with partition size $(4,4)$. Let $G$ be a group of automorphisms of $K_{4,4}$ that is regular on the edges. Then the action $(G, K_{4,4})$ is isomorphic to the action of one of the groups $L_{12},\dots, L_{21}$, indicated in Table \ref{tbl:K44} in Appendix \ref{app:presentations}, on its coset graph with respect to the subgroups $\langle a_i \rangle$ and $\langle b_i \rangle$, which are both of order four. Furthermore, these actions are pairwise not isomorphic.
    \end{lemma}

    \begin{lemma}\label{lem:reg_actions_on_K66}
        Let $K_{6,6}$ be the complete bipartite graph with partition size $(6,6)$. Let $G$ be a group of automorphisms of $K_{6,6}$ that is regular on the edges. Then the action $(G, K_{6,6})$ is isomorphic to the action of one of the groups $L_{22},\dots, L_{35}$, indicated in Table \ref{tbl:K66} in Appendix \ref{app:presentations}, on its coset graph with respect to the subgroups $\langle a_i \rangle$ and $\langle b_i \rangle$, which are both of order six. Furthermore these actions are pairwise not isomorphic.
    \end{lemma}
    
    \begin{remark}
        While the groups $L_1, \dots, L_{11}$ acting on $Q$ are pairwise non-isomorphic, several of the groups $L_{12}, \dots, L_{35}$ acting on $K_{4,4}$ and $K_{6,6}$ are indeed isomorphic. In fact, we have that $L_{13} \cong L_{18} \cong L_{19}$, $L_{15} \cong L_{16} \cong L_{21}$, $L_{22} \cong L_{26} \cong L_{27} \cong L_{28} \cong L_{30} \cong L_{31}$, $L_{23} \cong L_{24} \cong L_{32} \cong L_{33} \cong L_{34}$ and $L_{29} \cong L_{35}$. For the groups $L_{12}, \dots, L_{35}$ we provide a short description of the isomorphism type in Table \ref{tbl:descriptionL12L35} in Appendix \ref{app:more_data}.
    \end{remark}

    For $1\leq r \leq 35$, we call the subgroups $\langle a_i \rangle$ and $\langle b_i \rangle$ edge groups of $L_r$ and denote them by $F^r_A$ and $F^r_B$ respectively.
    
    As in Subsection \ref{subsec:given_local} we fix abstract groups isomorphic to the edge groups that occur. We just denote them by their isomorphism type and will refer to them as model edge groups. 
    \begin{align*}
        C_4 & := \langle e_1 \mid e_1^4 \rangle,
        &
        C_2\times C_2 & := \langle e_1, e_2 \mid e_1^2, e_2^2, (e_1e_2)^2 \rangle,
        \\
        C_6 & := \langle e_1 \mid e_1^6 \rangle,
        &
        \Sym(3) & := \langle e_1, e_2 \mid e_1^2, e_2^3, (e_1e_2)^2 \rangle.
    \end{align*}

    The presentations of the $L_r$ are chosen in such a way that, for the unique model group that is isomorphic to $F_A^r$, respectively $F_B^r$, the assignments $e_i \mapsto a_i$, respectively $e_i \mapsto b_i$, define monomorphisms from the model group to $L_r$. We denote these monomorphisms by $\kappa_A^r$ and $\kappa_B^r$ respectively and refer to them as standard embeddings.
    We make a quick observation that will be needed later.

    \begin{observation}\label{obs:swap_edge_grps}
        The group $L_r$ admits a group automorphism that swaps its edge groups if and only if $r \in \{12,13,16,17,22,25,26,29\}$. In fact, if $r$ is in this list the assignment $a_r \leftrightarrow b_r$ on the generators extends to an involutory automorphism of $L_i$.
    \end{observation}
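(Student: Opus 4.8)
My plan is to split the argument along the three underlying graphs $Q$, $K_{4,4}$, $K_{6,6}$ and to treat the two implications of the equivalence separately. To begin, $L_1,\dots,L_{11}$ are ruled out at once: for an edge-regular group on the incidence graph of $Q$ the two edge groups $\langle a_i\rangle$ and $\langle b_i\rangle$ are the stabiliser of a point and the stabiliser of a line, and since $Q$ has order $(3,5)$ a point lies on $6$ lines while a line carries $4$ points. Regularity on flags then forces a point stabiliser to act regularly on the $6$ flags through that point and a line stabiliser to act regularly on the $4$ flags on that line, so one edge group has order $6$ and the other order $4$. An automorphism of $L_i$ preserves the order of a subgroup, hence cannot interchange $\langle a_i\rangle$ and $\langle b_i\rangle$; thus no $L_i$ with $i\le 11$ appears in the list.

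For the ``if'' direction I would exhibit the automorphism explicitly. For each of the eight indices $i\in\{12,13,16,17,22,25,26,29\}$ one verifies from the presentation of $L_i$ in Appendix \ref{app:presentations} that the involution of the ambient free group interchanging the generators $a_i$ with the generators $b_i$ (componentwise on the generating tuples) sends the set of defining relators to itself. By the universal property of a presentation it therefore induces an endomorphism of $L_i$, and applying it twice gives the identity, so it is an involutory automorphism that by construction swaps $\langle a_i\rangle$ and $\langle b_i\rangle$. This also proves the ``in fact'' clause of the statement.

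For the ``only if'' direction it remains to handle $i\in\{12,\dots,35\}$ outside those eight values, where now both edge groups have the same order ($4$ for $K_{4,4}$, $6$ for $K_{6,6}$), so the order argument no longer suffices. Here one must separate the pair $(L_i,\langle a_i\rangle)$ from $(L_i,\langle b_i\rangle)$ by an $\Aut(L_i)$-invariant. For most of these indices the edge groups are already non-isomorphic as abstract groups ($C_4$ versus $C_2\times C_2$, respectively $C_6$ versus $\Sym(3)$), which finishes the argument immediately. For the remaining indices, where $\langle a_i\rangle\cong\langle b_i\rangle$, one compares finer data preserved by $\Aut(L_i)$ — the isomorphism type of the normaliser, the length of the subgroup's conjugacy class, or containment in a proper characteristic subgroup — or, conclusively, one computes $\Aut(L_i)$ and checks that the orbit of $\langle a_i\rangle$ does not contain $\langle b_i\rangle$. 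Since each $L_i$ is a small explicit finite group, this is the GAP verification underlying the tables of Appendix \ref{app:presentations}.

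The only step carrying genuine content is this last one: for the handful of $L_i$ whose two edge groups are abstractly isomorphic, one must actually pin down a distinguishing invariant (or run the $\Aut(L_i)$-orbit computation), and that is where I expect the real work to be.
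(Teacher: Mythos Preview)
The paper states this as an \emph{observation} with no proof at all; it is treated as a finite computational fact about explicitly presented small groups, in line with the paper's general reliance on computer verification. Your sketch therefore goes well beyond what the paper does, and the overall strategy --- dispose of $i\le 11$ by the order mismatch, exhibit the swap explicitly for the eight listed indices, and separate the remaining cases by $\Aut(L_i)$-invariants --- is sound.

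Two small points are worth flagging. First, in the ``if'' direction you assert that the swap $a_i\leftrightarrow b_i$ sends the \emph{set} of defining relators to itself. What is actually required (and what holds) is that the swap preserves the \emph{normal closure} of the relators; for some of the eight presentations (e.g.\ $L_{16}$, whose last relator $a_2a_1b_2a_2b_2b_1$ is not symmetric in any obvious sense) the swapped word is not literally a relator, a cyclic shift, or an inverse of one, but only a consequence of the full relator set. This is still a routine finite check, but your phrasing slightly overstates what one sees by inspection. Second, for the ``only if'' direction you correctly isolate the genuinely nontrivial cases as those where the two edge groups are abstractly isomorphic, namely $i\in\{14,15,23,24,27,28\}$; these do need an honest invariant (or an $\Aut(L_i)$-orbit computation), and the paper does not supply one either. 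So your proposal is correct, and the residual ``real work'' you point to is exactly the part that the paper, too, leaves to the computer.
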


    We will list the possible triangles of groups using the following notation.

    \begin{notation}
        \begin{enumerate}
            \item 
            Let $T$ be the triangle of groups indicated in Figure \ref{fig:RightAngledTriangleOfGroups} equipped with the type function indicated by notation. Then we denote this triangle with $\textbf{T} (V_1,\allowbreak V_2,\allowbreak V_3,\allowbreak E_1,\allowbreak E_2,\allowbreak E_3,\allowbreak (\epsilon_{ij})_{ij})$. 
            \item
            Let $(r,s,t)$ be a triple with $1\leq r,s \leq 11$, $12 \leq t \leq 21$ and such that
            $E_t \cong F^r_B \cong F^s_B$,
            $E_s \cong F^r_A \cong F^t_B$ and
            $E_r \cong F^s_A \cong F^t_A$, where $E_r,E_s, E_t$ are model edge groups.
            We denote the family of triangles $\textbf{T}(L_r, L_s, L_t, E_r, E_s, E_t, (\epsilon_{ij})_{ij})$ satisfying the conditions below with~$\mathcal T_{(r,s,t)}^{(1)}$, see Figure \ref{fig:T1}.
            \begin{align*}
                \im(\epsilon_{12}) & = F^s_A, &
                \im(\epsilon_{13}) & = F^t_A, &
                \im(\epsilon_{21}) & = F^r_A, \\
                \im(\epsilon_{23}) & = F^t_B, &
                \im(\epsilon_{31}) & = F^r_B, &
                \im(\epsilon_{32}) & = F^s_B.
            \end{align*}
            \item
            Let $(r,s,t)$ be a triple with $1\leq r,s \leq 11$, $22 \leq t \leq 35$ and such that
            $E_t \cong F^r_A \cong F^s_A$,
            $E_s \cong F^r_B \cong F^t_B$ and
            $E_r \cong F^s_B \cong F^t_A$, where $E_r,E_s, E_t$ are model edge groups.
            We denote the family of triangles $\textbf{T}(L_r, L_s, L_t, \dots, (\epsilon_{ij})_{ij})$ satisfying the conditions below with~$\mathcal T_{(r,s,t)}^{(2)}$, see Figure \ref{fig:T2}.
            \begin{align*}
                \im(\epsilon_{12}) & = F^s_B, &
                \im(\epsilon_{13}) & = F^t_A, &
                \im(\epsilon_{21}) & = F^r_B, \\
                \im(\epsilon_{23}) & = F^t_B, &
                \im(\epsilon_{31}) & = F^r_A, &
                \im(\epsilon_{32}) & = F^s_A.
            \end{align*}
        \end{enumerate}
    \end{notation}
    
    \begin{figure}
        \centering
        \begin{tikzpicture}
        \def\outerRadius{4cm}
        \coordinate (V2) at (270:\outerRadius);
        \coordinate (V3) at (0:\outerRadius);
        \coordinate (V1) at (180:\outerRadius);
        \coordinate (E2) at ($(V2)!0.5!(V1)$);;
        \coordinate (E3) at ($(V2)!0.5!(V3)$);
        \coordinate (E1) at ($(V3)!0.5!(V1)$);
        \coordinate (A) at ($(E3)!0.5!(V3)$); 
        \node (V3) at (270:\outerRadius){$V_3$};
        \node (V1) at (0:\outerRadius){$V_1$};
        \node (V2) at (180:\outerRadius){$V_2$};
        \node (E3) at ($(V2)!0.5!(V1)$){$E_3$};
        \node (E1) at ($(V2)!0.5!(V3)$){$E_1$};
        \node (E2) at ($(V3)!0.5!(V1)$){$E_2$};
        \node (A) at ($(E3)!0.5!(V3)$){$1$}; 
        \draw[->] (E3) to node[above] {\footnotesize $\epsilon_{32}$} (V2);
        \draw[->] (E3) to node[above] {\footnotesize $\epsilon_{31}$} (V1);
        \draw[->] (E1) to node[midway, sloped, below] {\footnotesize$\epsilon_{13}$} (V3);
        \draw[->] (E1) to node[midway, sloped, below] {\footnotesize$\epsilon_{12}$} (V2);
        \draw[->] (E2) to node[midway, sloped, below] {\footnotesize$\epsilon_{23}$} (V3);
        \draw[->] (E2) to node[midway, sloped, below] {\footnotesize$\epsilon_{21}$} (V1);
        \draw[->] (A) to (E3);
        \draw[->] (A) to (E1);
        \draw[->] (A) to (E2);
        \end{tikzpicture}
        \caption{The triangle of groups $\textbf{T} (V_1,\allowbreak V_2,\allowbreak V_3,\allowbreak E_1,\allowbreak E_2,\allowbreak E_3,\allowbreak (\epsilon_{ij})_{ij})$.}
        \label{fig:RightAngledTriangleOfGroups}
    \end{figure}
     \begin{figure}
        \centering
        \begin{tikzpicture}
        \def\outerRadius{4cm}
        \def\mRadius{3.75cm}
        \coordinate (V2) at (270:\outerRadius);
        \coordinate (V3) at (0:\outerRadius);
        \coordinate (V1) at (180:\outerRadius);
        \coordinate (V12) at ($(V2)!0.85!(V1)$);
        \node [rotate=-45](V12) at ($(V2)!0.85!(V1)$) {$\ge F^s_A$};
        \coordinate (V21) at ($(V2)!0.8!(V3)$);
        \node [rotate=45](V21) at ($(V2)!0.85!(V3)$) {$F^r_A\le $};
        \coordinate (V23) at ($(V3)!0.8!(V2)$);
        \node [rotate=45](V23) at ($(V3)!0.85!(V2)$) {$\ge F^t_B $};
        \coordinate (V13) at ($(V1)!0.8!(V2)$);
        \node [rotate=-45](V13) at ($(V1)!0.85!(V2)$) {$F^t_A\le  $};
        \coordinate (E2) at ($(V2)!0.5!(V1)$);;
        \coordinate (E3) at ($(V2)!0.5!(V3)$);
        \coordinate (E1) at ($(V3)!0.5!(V1)$);
        \coordinate (A) at ($(E3)!0.5!(V3)$); 
        \coordinate (V31) at ($(V1)!0.85!(V3)$);
        \node (V31) at ($(V1)!0.9!(V3)$) {$F^r_B\le $};
        \coordinate (V32) at ($(V3)!0.85!(V1)$);
        \node (V32) at ($(V3)!0.9!(V1)$) {$\ge F^s_B $};
        \node (V3) at (270:\outerRadius){$L_t$};
        \node (V1) at (0:\outerRadius){$L_r$};
        \node (V2) at (180:\outerRadius){$L_s$};
        \node (E3) at ($(V2)!0.5!(V1)$){$E_t$};
        \node[rotate=-45](E1) at ($(V2)!0.5!(V3)$){$ E_r$};
        \node (E2)[rotate=45] at ($(V3)!0.5!(V1)$){$ E_s$};
        \node (A) at ($(E3)!0.5!(V3)$){$1$}; 
        \draw[->] (E3) to node[above] {\footnotesize $\epsilon_{32}$} (V32);
        \draw[->] (E3) to node[above] {\footnotesize $\epsilon_{31}$} (V31);
        \draw[->] (E1) to node[midway, sloped, below] {\footnotesize$\epsilon_{13}$} (V13);
        \draw[->] (E1) to node[midway, sloped, below] {\footnotesize$\epsilon_{12}$} (V12);
        \draw[->] (E2) to node[midway, sloped, below] {\footnotesize$\epsilon_{23}$} (V23);
        \draw[->] (E2) to node[midway, sloped, below] {\footnotesize$\epsilon_{21}$} (V21);
        \draw[->] (A) to (E3);
        \draw[->] (A) to (E1);
        \draw[->] (A) to (E2);
        \end{tikzpicture}
        \caption{A triangle of groups in~$\mathcal T_{(r,s,t)}^{(1)}$.}
        \label{fig:T1}
    \end{figure}
    \begin{figure}
        \centering
        \begin{tikzpicture}
        \def\outerRadius{4cm}
        \def\mRadius{3.75cm}
        \coordinate (V2) at (270:\outerRadius);
        \coordinate (V3) at (0:\outerRadius);
        \coordinate (V1) at (180:\outerRadius);
        \coordinate (V12) at ($(V2)!0.8!(V1)$);
        \node [rotate=-45](V12) at ($(V2)!0.85!(V1)$) {$\ge F^s_B$};
        \coordinate (V21) at ($(V2)!0.8!(V3)$);
        \node [rotate=45](V21) at ($(V2)!0.85!(V3)$) {$F^r_B\le $};
        \coordinate (V23) at ($(V3)!0.8!(V2)$);
        \node [rotate=45](V23) at ($(V3)!0.85!(V2)$) {$\ge F^t_B $};
        \coordinate (V13) at ($(V1)!0.8!(V2)$);
        \node [rotate=-45](V13) at ($(V1)!0.85!(V2)$) {$F^t_A\le  $};
        \coordinate (E2) at ($(V2)!0.5!(V1)$);;
        \coordinate (E3) at ($(V2)!0.5!(V3)$);
        \coordinate (E1) at ($(V3)!0.5!(V1)$);
        \coordinate (A) at ($(E3)!0.5!(V3)$); 
        \coordinate (V31) at ($(V1)!0.85!(V3)$);
        \node (V31) at ($(V1)!0.9!(V3)$) {$F^r_A\le $};
        \coordinate (V32) at ($(V3)!0.85!(V1)$);
        \node (V32) at ($(V3)!0.9!(V1)$) {$\ge F^s_A $};
        \node (V3) at (270:\outerRadius){$L_t$};
        \node (V1) at (0:\outerRadius){$L_r$};
        \node (V2) at (180:\outerRadius){$L_s$};
        \node (E3) at ($(V2)!0.5!(V1)$){$E_t$};
        \node[rotate=-45](E1) at ($(V2)!0.5!(V3)$){$ E_r$};
        \node (E2)[rotate=45] at ($(V3)!0.5!(V1)$){$ E_s$};
        \node (A) at ($(E3)!0.5!(V3)$){$1$}; 
        \draw[->] (E3) to node[above] {\footnotesize $\epsilon_{32}$} (V32);
        \draw[->] (E3) to node[above] {\footnotesize $\epsilon_{31}$} (V31);
        \draw[->] (E1) to node[midway, sloped, below] {\footnotesize$\epsilon_{13}$} (V13);
        \draw[->] (E1) to node[midway, sloped, below] {\footnotesize$\epsilon_{12}$} (V12);
        \draw[->] (E2) to node[midway, sloped, below] {\footnotesize$\epsilon_{23}$} (V23);
        \draw[->] (E2) to node[midway, sloped, below] {\footnotesize$\epsilon_{21}$} (V21);
        \draw[->] (A) to (E3);
        \draw[->] (A) to (E1);
        \draw[->] (A) to (E2);
        \end{tikzpicture}
        \caption{A triangle of groups in~$\mathcal T_{(r,s,t)}^{(2)}$.}
        \label{fig:T2}
    \end{figure}
    \begin{remark}\label{rem:how_to_type_triple}
        Consider a triple $(r,s,t)$ and type groups as follows
        \begin{align*}
            V_1 &:= L_r, & E_A^1 &:= F_A^r, & E_B^1 & := F_B^r, \\
            V_2 &:= L_s, & E_A^2 &:= F_A^s, & E_B^2 & := F_B^s, \\
            V_3 &:= L_t, & E_A^3 &:= F_A^t, & E_B^3 & := F_B^t. &
        \end{align*}
        Then the family $\mathcal T^{(1)}_{(r,s,t)}$ is a (possibly redundant) representative system for triangles of groups with local action datum $((V_i, E_A^i, E_B^i))_i$ matched along 
        \begin{align*}
            f_1 &: 2 \mapsto A,\; 3 \mapsto B,&
            f_2 &: 1 \mapsto A,\; 3 \mapsto B,&
            f_3 &: 1 \mapsto A,\; 2 \mapsto B.&
        \end{align*}
        Similarly the family $\mathcal T^{(2)}_{(r,s,t)}$ is a (possibly redundant) representative system for triangles of groups with local action datum $((V_i, E_A^i, E_B^i)_i)$ matched along 
        \begin{align*}
            f_1 &: 2 \mapsto B,\; 3 \mapsto A,&
            f_2 &: 1 \mapsto B,\; 3 \mapsto A,&
            f_3 &: 1 \mapsto A,\; 2 \mapsto B.&
        \end{align*}
    \end{remark}

    By the next observation, it is sufficient to restrict to triangles of groups appearing in the families $\mathcal T_{(r,s,t)}^{(\nu)}$ for $\nu \in \{1,2\}$ and a triple $(r,s,t)$.
    
    \begin{observation}\label{obs:every_triangle_in_model_family}
        Let $(r,s,t)$ be a triple with $1\leq r,s \leq 11$ and $t \geq 12$ and type the associated groups as in Remark \ref{rem:how_to_type_triple}.
        Assume that the triple $((V_i, E_A^i, E_B^i))_i$ is compatible with respect to a matching $(f_i)_i$. Then by just considering the orders of the edge groups we have that either $f_1 : 2 \mapsto A,\; 3 \mapsto B$ and $f_2 : 1 \mapsto A,\; 3 \mapsto B$, or $f_1 : 2 \mapsto B,\; 3 \mapsto A$ and $f_2 : 1 \mapsto B,\; 3 \mapsto A$. Moreover, by interchanging the roles of $r$ and $s$, one can ensure that in both cases $f_3 : 1 \mapsto A,\; 2 \mapsto B$.
        In particular any triangle of groups with local action datum $((V_i, E_A^i, E_B^i))_i$ is isomorphic to a triangle of groups in $\mathcal T^{(1)}_{(r',s',t)}$ or $\mathcal T^{(2)}_{(r',s',t)}$, where $\{r',s'\} = \{r,s\}$.
    \end{observation}

    \begin{observation}\label{obs:model_families_isomorphisms}
        Let $\nu \in \{1,2\}$.
        Let $(r,s,t)$, $(r', s', t')$ be a triples with $1\leq r,r',s, s' \leq 11$ and $t,t' \geq 12$ and assume that the families $\mathcal T_{(r,s,t)}^{(\nu)}$ and $\mathcal T_{(r',s',t')}^{(\nu)}$ are defined. 
        Let $T \in \mathcal T_{(r,s,t)}^{(\nu)}$, let $T' \in \mathcal T_{(r',s',t')}^{(\nu)}$ and assume that $T$ and $T'$ are isomorphic via an isomorphism $\phi$. If $\phi$ is type-preserving, then we have that $(r,s,t) = (r',s',t')$. If $\phi$ is not type-preserving then $(r,s,t) = (s',r', t')$ and moreover $L_t$ must admit an automorphism that swaps its edge groups and therefore $t$ lies in the list in Observation \ref{obs:swap_edge_grps}.
        Conversely if $(r,s,t) = (s',r',t')$ and $t$ lies in the list in Observation \ref{obs:swap_edge_grps}, then for any $T \in \mathcal T_{(r,s,t)}^{(\nu)}$ there is an isomorphic $T' \in \mathcal T_{(r,s,t)}^{(\nu)}$.
    \end{observation}

    \begin{lemma}\label{lem:case446}
        Let $T$ be a triangle of groups contained in a family $T_{(r,s,t)}^{(1)}$ for some triple $(r,s,t)$. Then an isomorphic copy of $T$ lies in exactly one of the following 163 families.
        \begin{enumerate}
            \item $\mathcal T_{(1,1,t)}^{(1)}$ for $t \in \{12,13,14\}$ (three families),
            \item $\mathcal T_{(r,s,t)}^{(1)}$ for $r,s \in \{6,9\}, r\leq s$, $t \in \{12,13,14\}$ (nine families),
            \item $\mathcal T_{(9,6,14)}^{(1)}$ (one family),
            \item $\mathcal T_{(r,s,t)}^{(1)}$ for $r,s \in \{3,5,7,8\}, r\leq s$, $t \in \{15,16,17\}$ (30 families),
            \item $\mathcal T_{(r,s,15)}^{(1)}$ for $r,s \in \{3,5,7,8\}, s < r$ (six families).
            \item $\mathcal T_{(r,s,t)}^{(1)}$ for $r,s \in \{2,4,10,11\}, r\leq s$, $t \in \{15,16,17\}$ (30 families),
            \item $\mathcal T_{(r,s,15)}^{(1)}$ for $r,s \in \{2,4,10,11\}, s<r$ (six families).
            \item $\mathcal T_{(1,s,t)}^{(1)}$ for $s \in \{3,5,7,8\}$, $t \in \{18,19,20,21\}$ (16 families),
            \item $\mathcal T_{(r,s,t)}^{(1)}$ for $r\in \{2,4,10,11\}$, $s \in \{6,9\}$, $t \in \{18,19,20,21\}$ (32 families).
        \end{enumerate}
    \end{lemma}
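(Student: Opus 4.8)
The plan is to turn the statement into a finite enumeration driven by the three preceding classification lemmas.

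First I would produce the triangle of groups. By Observation~\ref{osb:classes} the action $(\Gamma,X)$ arises from a triangle of groups $T$ with trivial face group (chamber-regularity) whose local actions are the local actions of $(\Gamma,X)$. Since $X$ is of type $\tilde C_2$ its apartments are $(\tfrac\pi2,\tfrac\pi4,\tfrac\pi4)$-triangles, so two vertex links are generalized quadrangles --- here $W^\diamond(4)$ --- and the third is a generalized digon, hence $K_{4,4}$; after choosing a type function I may assume $V_1,V_2$ are the $W^\diamond(4)$-local actions and $V_3$ the $K_{4,4}$-local action. The classification lemma for edge-regular actions on $W^\diamond(4)$ gives $V_1\cong L_r$, $V_2\cong L_s$ with $1\le r,s\le 11$, and the one for $K_{4,4}$ gives $V_3\cong L_t$ with $12\le t\le 21$. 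I would record once and for all that any triangle of groups assembled from such data is non-degenerate, developable, and has a $\tilde C_2$-building as development: for any $L_i$ the two edge subgroups are the stabilizers of two adjacent vertices of a bipartite graph on which $L_i$ acts freely on edges while preserving the bipartition, so they intersect trivially (a common fixed edge would be fixed pointwise) and generate $L_i$ (their orbit of that edge exhausts the connected link); and the angle sum $\tfrac\pi4+\tfrac\pi4+\tfrac\pi2=\pi$ lets the corollary following Proposition~\ref{prop:local_approach} apply. In particular every triple passing the constraints below is actually realized, so the content of the lemma is purely \emph{which} triples occur.

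Second I would pin down the edge groups. The edge $E_3$ joins $V_1$ and $V_2$ and misses the $K_{4,4}$-vertex, while $E_1,E_2$ each join a $W^\diamond(4)$-vertex to $V_3$. Adjacent vertices of the incidence graph of $W^\diamond(4)$ have degrees $4$ and $6$, so by edge-freeness the edge subgroups of $L_r$ and of $L_s$ have orders $4$ and $6$, whereas both edge subgroups of $L_t$ have order $4$ since $L_t$ is edge-regular on the $4$-regular $K_{4,4}$. Matching orders along the shared edge groups forces $|E_1|=|E_2|=4$ and $|E_3|=6$, and, writing $\langle a_i\rangle$ for the order-$4$ and $\langle b_i\rangle$ for the order-$6$ edge subgroup of a $W^\diamond(4)$-local action, $E_1\cong\langle a_s\rangle\cong\langle a_t\rangle$, $E_2\cong\langle a_r\rangle\cong\langle b_t\rangle$ and $E_3\cong\langle b_r\rangle\cong\langle b_s\rangle$. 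Each $L_i$ is transitive on the edges of its link, so after conjugating each vertex group independently (an allowed triangle-of-groups isomorphism) I may assume the six images $\im\varepsilon_{ij}$ are exactly the standard subgroups $\im\kappa_A^{\bullet},\im\kappa_B^{\bullet}$ in the pattern defining $\mathcal T^{(1)}_{(r,s,t)}$; the only remaining freedom --- whether $E_1\mapsto\im\kappa_A^t$, $E_2\mapsto\im\kappa_B^t$ or the reverse --- is absorbed by the relabelling $V_1\leftrightarrow V_2$, which turns $\mathcal T^{(1)}_{(r,s,t)}$ into $\mathcal T^{(1)}_{(s,r,t)}$. Hence an isomorphic copy of $T$ lies in $\mathcal T^{(1)}_{(r,s,t)}$ for some $(r,s,t)$ with $1\le r,s\le 11$, $12\le t\le 21$ obeying the three displayed isomorphism-type conditions.

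It then remains to enumerate and deduplicate. From the GAP-computed data in Tables~\ref{tbl:presentations_quadrangle} and~\ref{tbl:K44} one records, for each $i$, whether the order-$4$ edge subgroups of $L_i$ are $C_4$ or $C_2\times C_2$ and whether the order-$6$ one is $C_6$ or $\Sym(3)$, and tests $\langle a_s\rangle\cong\langle a_t\rangle$, $\langle a_r\rangle\cong\langle b_t\rangle$, $\langle b_r\rangle\cong\langle b_s\rangle$ over all triples, producing a finite list. To pass to isomorphism classes I would use that $V_3$ is intrinsically distinguished ($K_{4,4}\not\cong W^\diamond(4)$), so the local action at $V_3$ and the unordered pair of local actions at $V_1,V_2$ are triangle-of-groups invariants; since $L_1,\dots,L_{11}$ are pairwise non-isomorphic and the edge-regular actions $L_{12},\dots,L_{21}$ are pairwise non-isomorphic, distinct items and distinct parameter choices within an item give genuinely distinct data, except that $\mathcal T^{(1)}_{(r,s,t)}\cong\mathcal T^{(1)}_{(s,r,t)}$ precisely when $L_t$ has an automorphism exchanging $\langle a_t\rangle$ and $\langle b_t\rangle$, i.e.\ by Observation~\ref{obs:swap_edge_grps} precisely when $t\in\{12,13,16,17\}$. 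Listing only $r\le s$ for those $t$ and both orderings otherwise, and discarding triples that fail the constraints, collapses the enumeration to items~(1)--(9) and makes the families pairwise disjoint, upgrading ``at least one'' to ``exactly one''.

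The step I expect to be the main obstacle is this last bookkeeping: correctly playing the $V_1\leftrightarrow V_2$ relabelling off against the presence or absence of an edge-swapping automorphism of $L_t$, so that no family is counted twice and none is omitted. This is exactly the place where the computer verifications --- of the edge-subgroup isomorphism types of $L_1,\dots,L_{21}$, of the pairwise non-isomorphism of the local actions, and of Observation~\ref{obs:swap_edge_grps} --- are indispensable, and where an arithmetic slip in the case-count would most plausibly hide.
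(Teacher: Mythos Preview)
Your proposal is correct and follows the same strategy as the paper's (very terse) proof: enumerate compatible triples $(r,s,t)$ of local actions and then quotient by the mirror operation together with, when available, the edge-swapping automorphism of $L_t$ from Observation~\ref{obs:swap_edge_grps}. You in fact supply considerably more detail than the paper does, including the non-degeneracy check and the order-matching of edge groups.

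One sentence deserves tightening. You write that the relabelling $V_1\leftrightarrow V_2$ ``turns $\mathcal T^{(1)}_{(r,s,t)}$ into $\mathcal T^{(1)}_{(s,r,t)}$''. Strictly, mirroring sends the pattern $(r,s;\,E_1\!\to\!\kappa_A^t,\,E_2\!\to\!\kappa_B^t)$ to $(s,r;\,E_1\!\to\!\kappa_B^t,\,E_2\!\to\!\kappa_A^t)$, i.e.\ to the \emph{reversed} slot assignment in $L_t$, not to $\mathcal T^{(1)}_{(s,r,t)}$ itself. What is true (and what your argument actually uses) is that the reversed configuration for $(r,s)$ is, after mirroring, the standard configuration for $(s,r)$; hence only $\mathcal T^{(1)}$-families need be listed. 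The further identification $\mathcal T^{(1)}_{(r,s,t)}\cong\mathcal T^{(1)}_{(s,r,t)}$ then requires, in addition, an automorphism of $L_t$ swapping $\langle a_t\rangle$ and $\langle b_t\rangle$, exactly as you state afterwards. This is precisely how the paper phrases it: mirror always, and swap the edge-group images in $V_3$ only when $t$ is in the list of Observation~\ref{obs:swap_edge_grps}; the enumerated families are a representative system for these two operations.
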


    \begin{lemma}\label{lem:case664}
        Let $T$ be a triangle of groups contained in a family $T_{(r,s,t)}^{(2)}$ for some triple $(r,s,t)$. Then an isomorphic copy of $T$ lies in exactly one of the following 232 families.
        \begin{enumerate}
            \item $\mathcal T_{(1,1,t)}^{(2)}$ for $t \in \{22,23,24,25\}$ (four families),
            \item $\mathcal T_{(r,s,t)}^{(2)}$ for $r,s \in \{3,5,7,8\}, r \leq s$, $t \in \{22,23,24,25\}$ (40 families),
            \item $\mathcal T_{(r,s,t)}^{(2)}$ for $r,s \in \{3,5,7,8\}, s < r$, $t \in \{23,24\}$ (12 families),
            \item $\mathcal T_{(r,s,t)}^{(2)}$ for $r,s \in \{6,9\}, r \leq s$, $t \in \{26,27,28,29\}$ (12 families),
            \item $\mathcal T_{(9,6,t)}^{(2)}$ for $t \in \{27,28\}$ (two families),
            \item $\mathcal T_{(r,s,t)}^{(2)}$ for $r,s \in \{2,4,10,11\}, r \leq s$, $t \in \{26,27,28,29\}$ (40 families),
            \item $\mathcal T_{(r,s,t)}^{(2)}$ for $r,s \in \{2,4,10,11\}, s < r$, $t \in \{27,28\}$ (12 families),
            \item $\mathcal T_{(1,s,t)}^{(1)}$ for $s \in \{6,9\}$, $t \in \{30,31,32,33,34,35\}$ (12 families).
            \item $\mathcal T_{(r,s,t)}^{(1)}$ for $r\in \{3,5,7,8\}$, $s \in \{2,4,10,11\}$, $t \in \{30,31,32,33,34,35\}$ (96 families).
        \end{enumerate}
    \end{lemma}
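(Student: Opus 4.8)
The plan is to feed the hypotheses into the triangle‑of‑groups machinery of Sections~\ref{sec:triangles_of_groups} and~\ref{sec:buildings}, turn the classification into a finite matching of local actions, and then read the answer off the three classification lemmas above. Concretely: since $\Gamma$ is regular on the chambers of the building $X$, and $X$ is a simply connected, pure $2$‑dimensional complex all of whose vertex links are connected bipartite graphs without leaves or multiple edges (copies of $W^\diamond(4)$ and of $K_{6,6}$), the action $(\Gamma,X)$ arises from a triangle of groups $T$, and by Proposition~\ref{prop:uniqueness_dev} we may assume $X=D(T)$ and $\Gamma=\pi_1(T)$. The local action of $T$ at a vertex is an edge‑regular action on that link, so by the three lemmas above it is isomorphic to one of $L_1,\dots,L_{11}$ at the two $W^\diamond(4)$‑vertices and to one of $L_{22},\dots,L_{35}$ at the $K_{6,6}$‑vertex. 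Because $\tfrac14+\tfrac14+\tfrac12=1$ the angle sum of $T$ is exactly $\pi$; this is why $X$ is a building, and also why the three vertex types are not interchangeable: the $K_{6,6}$‑vertex is the one whose local graph has girth $4$, hence the right‑angled vertex, which in the notation $\mathcal T^{(2)}_{(r,s,t)}$ and in Figure~\ref{fig:RightAngledTriangleOfGroups} is the vertex $V_3$ carrying $L_t$, while $V_1,V_2$ carry the two $W^\diamond(4)$‑links $L_r,L_s$.

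\emph{Locating the edge groups.} A flag count shows that an edge‑regular group on $W^\diamond(4)$ has order $384$, with edge groups of orders $6$ (a point stabiliser) and $4$ (a line stabiliser), while an edge‑regular group on $K_{6,6}$ has order $36$ with both edge groups of order $6$. Comparing, for an edge of $X$, its degree in the two links that contain it shows that at a $W^\diamond(4)$‑vertex the order‑$6$ edge group is the one shared with the $K_{6,6}$‑vertex and the order‑$4$ edge group the one shared with the other $W^\diamond(4)$‑vertex, while the two order‑$6$ edge groups of the $K_{6,6}$‑vertex go one to each $W^\diamond(4)$‑vertex. As $W^\diamond(4)$ has order $(3,5)$ and is not self‑dual, this placement is forced. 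Fixing the model edge groups $C_4,C_2\times C_2,C_6,\Sym(3)$ and the standard embeddings (so that $\kappa_A^i$ is the order‑$4$ side of a $W^\diamond(4)$‑action), this is exactly the configuration built into the definition of $\mathcal T^{(2)}_{(r,s,t)}$. Hence an isomorphic copy of $T$ lies in some $\mathcal T^{(2)}_{(r,s,t)}$ iff the edge groups are abstractly compatible: $\im\kappa_A^r\cong\im\kappa_A^s$, $\im\kappa_B^r\cong\im\kappa_B^t$ and $\im\kappa_B^s\cong\im\kappa_A^t$. Conversely each such triple genuinely occurs, since the corresponding triangle $\textbf{T}(L_r,L_s,L_t,\dots,(\varepsilon_{ij})_{ij})$ is non‑degenerate (edge‑regularity forces trivial edge stabilisers, so the face group is trivial and the two edge subgroups generate each vertex group) and has angle sum $\pi$, so its development is a $\tilde C_2$‑building with the prescribed links by Proposition~\ref{prop:local_approach}.

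\emph{Enumeration and the ``exactly one'' clause.} Reading the edge‑group isomorphism types of $L_1,\dots,L_{11},L_{22},\dots,L_{35}$ off the data in Appendix~\ref{app:presentations} --- the finite, computer‑assisted part --- one finds the four blocks $\{1\}$, $\{3,5,7,8\}$, $\{6,9\}$, $\{2,4,10,11\}$ of $W^\diamond(4)$‑actions and the splitting $\{22,23,24,25\}$, $\{26,27,28,29\}$, $\{30,\dots,35\}$ of $K_{6,6}$‑actions, and matching them (the mixed pairings $(1,\{6,9\})$ and $(\{3,5,7,8\},\{2,4,10,11\})$ being forced onto $L_{30},\dots,L_{35}$, whose two edge groups have different isomorphism types) produces the nine items. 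For the last clause, note that the three lemmas give \emph{pairwise non‑isomorphic} local actions, so $T$ determines $L_t$ and the \emph{unordered} pair $\{L_r,L_s\}$; moreover a type‑permuting isomorphism between a triangle in $\mathcal T^{(2)}_{(r,s,t)}$ and one in $\mathcal T^{(2)}_{(s,r,t)}$ exists if and only if $L_t$ has an automorphism swapping its two edge groups, while a non‑type‑permuting one would force $L_r\cong L_s$. By Observation~\ref{obs:swap_edge_grps} such an automorphism exists exactly for $t\in\{22,25,26,29\}$, so for those $t$ only the ordering $r\le s$ is recorded, whereas for $t\in\{23,24\}$ and $t\in\{27,28\}$ (whose two edge groups are abstractly isomorphic but not interchangeable) the two orderings give distinct families, both of which appear --- items (3), (5), (7). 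This makes the listed families pairwise disjoint up to isomorphism and exhaustive, yielding the stated count.

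\emph{The main obstacle.} The genuine work is this last bookkeeping, and above all getting the redundancy exactly right: the pitfall is that two edge groups of $L_t$ being \emph{abstractly} isomorphic is not enough to reorder $V_1$ and $V_2$ --- one needs a concrete automorphism of $L_t$ realising the swap, which is precisely what Observation~\ref{obs:swap_edge_grps} supplies. Everything else --- the flag counts, the forced placement of the edge groups, non‑degeneracy, and the fact that every abstractly compatible triple is realised --- is routine once the framework of the earlier sections is in place.
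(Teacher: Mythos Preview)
Your proof is correct and follows essentially the same route as the paper's own (very terse) argument: enumerate the compatible triples of local actions from the three classification lemmas, then remove the redundancy coming from the mirror operation together with Observation~\ref{obs:swap_edge_grps} to decide for which $t$ the orderings $(r,s,t)$ and $(s,r,t)$ give the same family. The paper's proof says exactly this in two sentences; you have unpacked it (the flag count pinning down which edge group of $L_r$ faces $L_t$, the non-degeneracy check that every compatible triple is realised, and the careful analysis that a type-swapping isomorphism between $\mathcal T^{(2)}_{(r,s,t)}$ and $\mathcal T^{(2)}_{(s,r,t)}$ forces a swap automorphism of $L_t$), but the skeleton is identical.
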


    \begin{proof}[Proof of Lemmas \ref{lem:case446}, \ref{lem:case664}]
        To prove the two lemmas one just lists all triples $(r,s,t)$ for which the family $\mathcal T_{(r,s,t)}^{(1)}$ or $\mathcal T_{(r,s,t)}^{(2)}$ is defined, and afterwords one removes redundant triples using Observation \ref{obs:model_families_isomorphisms}. These calculations can be easily performed by hand.
    \end{proof}

    We will now determine the number of type-preserving isomorphism classes of triangles of groups in each family from the Lemmas \ref{lem:case446} and \ref{lem:case664}. To do so we will use Observation \ref{obs:double_cosets} and introduce some more necessary notation.
    If we have the group $L_r$ and the local automorphism group $\Sigma(L_r, F_A^r, F_A^r)$, then by identifying the edge groups of $L_r$ with the isomorphic model edge groups $E_A, E_B$ via the standard embeddings we obtain a group $\Sigma^r \leq \Aut(E_A)\times \Aut(E_B)$ equipped with projections $\pi_A^r : \Sigma^r \to \Aut(E_A)$ and $\pi_B^r: \Sigma^r \to \Aut(E_B)$.
    We call $\Sigma^r$ also local automorphism group of $L_r$.
    Often we have that $\Sigma^r = \im(\pi^r_A) \times \im(\pi^r_B)$ and in that case we call $\Sigma^r$ decomposable and denote $\im(\pi^r_A)$ and $\im(\pi^r_B)$ with $\Sigma^r_A$ and $\Sigma^r_B$ respectively.
    If $\Sigma^r$ is decomposable, we also call the corresponding action decomposable.
    We computed the groups $\Sigma^r$ and listed them in the Tables \ref{tbl:sigmasQ}, \ref{tbl:sigmasK44}, \ref{tbl:sigmasK66} in Appendix \ref{app:more_data}. 
    One may verify the results with code provided in \cite{ChamberRegularCode}.
    We point out the following.
    
    \begin{observation}
        For $1 \leq i \leq 35$ the local automorphism group $\Sigma^i$ is indecomposable if and only if $i \in \{24, 28,31,33\}$.
    \end{observation}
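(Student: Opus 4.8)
\section*{Proof sketch}

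The claim is a finite verification against the computed tables, so the plan is to first isolate the elementary criterion that makes ``decomposable'' checkable from group orders alone, and then run through the tables.

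First I would record the order criterion. If $\Sigma \leq G_1 \times G_2$ is any subgroup with projections $H_i := \pi_i(\Sigma) \leq G_i$, then $\Sigma \leq H_1 \times H_2$, and a subgroup of a finite group equals the ambient group exactly when it has the same order; hence $\Sigma = H_1 \times H_2$ if and only if $|\Sigma| = |H_1| \cdot |H_2|$. This is the cardinality shadow of Goursat's lemma: $\Sigma$ is the preimage of the graph of an isomorphism $H_1/N_1 \xrightarrow{\ \sim\ } H_2/N_2$ for suitable normal subgroups $N_i \trianglelefteq H_i$, so $|\Sigma| = |N_1| \cdot |H_2| = |H_1| \cdot |N_2|$, and $\Sigma$ is a direct product of its projections precisely when the coupled quotient $H_1/N_1$ is trivial. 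Applying this with $G_1 = \Aut(E_A)$, $G_2 = \Aut(E_B)$, $H_1 = \im(\pi_A^i)$ and $H_2 = \im(\pi_B^i)$ reduces the statement to the single numerical condition: $\Sigma^i$ is decomposable if and only if $|\Sigma^i| = |\im(\pi_A^i)| \cdot |\im(\pi_B^i)|$.

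Next I would go through the Tables \ref{tbl:sigmasQ}, \ref{tbl:sigmasK44}, \ref{tbl:sigmasK66}. The model edge groups are $C_4$, $C_2 \times C_2$, $C_6$, $\Sym(3)$, whose automorphism groups are $C_2$, $\Sym(3)$, $C_2$, $\Sym(3)$ respectively, so every $\Sigma^i$ sits inside one of the four small ambient groups obtained as products of copies of $C_2$ and $\Sym(3)$. For each $i \notin \{24,28,31,33\}$ the entry recorded in the appropriate table is visibly the direct product of its two projections $\im(\pi_A^i) \times \im(\pi_B^i)$ — equivalently the recorded order is the product of the two projection orders — so the criterion is met and $\Sigma^i$ is decomposable.

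Finally, for the four exceptional indices $i \in \{24,28,31,33\}$, all occurring among the $K_{6,6}$-actions, I would exhibit the failure directly from the table: in each of these cases the projections $\im(\pi_A^i)$ and $\im(\pi_B^i)$ are non-trivial, yet $\Sigma^i$ is a \emph{proper} subgroup of $\im(\pi_A^i) \times \im(\pi_B^i)$, realised as the graph of a non-trivial coupling between quotients of the two factors (Goursat). Since then $|\Sigma^i| < |\im(\pi_A^i)| \cdot |\im(\pi_B^i)|$, the criterion fails and $\Sigma^i$ is indecomposable. There is no conceptual obstacle here; the only real work — and hence the ``hard part'' — is the bookkeeping, i.e. making sure the projections and orders recorded for all $35$ values of $i$ in the appendix tables are exactly the ones entering the criterion.
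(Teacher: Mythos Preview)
Your proposal is correct and matches the paper's approach: the observation is stated without proof because it is read off directly from the computed Tables~\ref{tbl:sigmasQ}--\ref{tbl:sigmasK66}, where every entry with $i\notin\{24,28,31,33\}$ is already written explicitly as a direct product and the four exceptional entries are visibly diagonal-type subgroups of strictly smaller order than the product of their projections. Your order criterion via Goursat's lemma is a clean way to phrase the check, but the paper simply relies on the explicit form of the table entries.
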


    In particular, all edge-regular actions on $K_{4,4}$ and $Q$ are decomposable. Therefore, all families from Lemma \ref{lem:case446} and most families from Lemma \ref{lem:case664} only contain triangles of groups with only decomposable local actions. In that case there is a convenient minimal representative system for the double cosets corresponding to the isomorphism classes in the family.

    \begin{lemma}\label{lem:dec_double_cosets}
        \begin{enumerate}
            \item
                Let $\mathcal T = \mathcal T_{(r,s,t)}^{(1)}$ be a family from Lemma \ref{lem:case446} and let $R_r, R_s, R_t$ be minimal representative systems for the following double cosets spaces
                \begin{align*}
                    \Sigma_A^t \backslash \Aut(E_r) / \Sigma_A^s, &&
                    \Sigma_A^r \backslash \Aut(E_s) / \Sigma_B^t, &&
                    \Sigma_B^s \backslash \Aut(E_t) / \Sigma_B^r.
                \end{align*}
                Then the following family is a minimal representative system for isomorphism classes of triangles of groups in $\mathcal T$.
                \begin{align*}
                    \{
                        \textbf{T}(L_r, L_s, L_t, E_r, E_s, E_t, (\epsilon_{ij})_{ij})
                        \mid\;
                        & \epsilon_{13} = \kappa_A^t, \;
                        \epsilon_{12} \in \kappa_A^s \circ R_r,\\
                        &\epsilon_{21} = \kappa_A^r, \;
                        \epsilon_{23} \in \kappa_B^t \circ R_s,\\
                        &\epsilon_{32} = \kappa_B^s ,\;
                        \epsilon_{31} \in \kappa_B^r \circ R_t
                    \}.
                \end{align*}
            \item
                Let $\mathcal T = \mathcal T_{(r,s,t)}^{(2)}$ be a family from Lemma \ref{lem:case664} with only decomposable local actions and let $R_r, R_s, R_t$ be minimal representative systems for the following double cosets spaces
                \begin{align*}
                    \Sigma_A^t \backslash \Aut(E_r) / \Sigma_B^s, &&
                    \Sigma_B^r \backslash \Aut(E_s) / \Sigma_B^t, &&
                    \Sigma_A^s \backslash \Aut(E_t) / \Sigma_A^r.
                \end{align*}
                Then the following family is a minimal representative system for isomorphism classes of triangles of groups in $\mathcal T$.
                \begin{align*}
                    \{
                        \textbf{T}(L_r, L_s, L_t, E_r, E_s, E_t, (\epsilon_{ij})_{ij})
                        \mid\;
                        & \epsilon_{13} = \kappa_A^t, \;
                        \epsilon_{12} \in \kappa_B^s \circ R_r,\\
                        &\epsilon_{21} = \kappa_B^r, \;
                        \epsilon_{23} \in \kappa_B^t \circ R_s,\\
                        &\epsilon_{32} = \kappa_A^s ,\;
                        \epsilon_{31} \in \kappa_A^r \circ R_t
                    \}.
                \end{align*}
        \end{enumerate}
    \end{lemma}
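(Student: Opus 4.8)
The plan is to specialise the double-coset description of Observation~\ref{obs:double_cosets} to the decomposable situation, where it decouples into three \emph{independent} two-sided multiplication problems, one on each of $\Aut(E_r)$, $\Aut(E_s)$ and $\Aut(E_t)$.

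First I would recall the setup. After identifying $\mathcal T=\mathcal T^{(1)}_{(r,s,t)}$ with $C=\prod_i\Aut(E_i)^2$ as in Observation~\ref{obs:double_cosets}, a triangle is a tuple $(\gamma_{12},\gamma_{13},\gamma_{21},\gamma_{23},\gamma_{31},\gamma_{32})$ and its type-preserving isomorphism class is its orbit under $\iota_E(\prod_i\Aut(E_i))$ acting on one side and $\iota_V(\prod_i\Sigma^i)$ on the other. Each parameter $\phi_{E_i}$ multiplies precisely the two coordinates $\gamma_{i\bullet}$ attached to the edge group $E_i$, and these three pairs partition the six coordinates. Since $L_r,L_s,L_t$ all have decomposable local automorphism groups (they act on $Q$ or $K_{4,4}$, and none of $L_1,\dots,L_{21}$ is one of the indecomposable cases $L_{24},L_{28},L_{31},L_{33}$), we may write $\Sigma^j=\Sigma^j_A\times\Sigma^j_B$; then a parameter $\phi_{V_j}=(\sigma^j_A,\sigma^j_B)$ contributes the factor $\sigma^j_A$, resp.\ $\sigma^j_B$, to the unique coordinate $\gamma_{ij}$ for which $E_i$ is the $A$-side, resp.\ $B$-side, of $V_j$. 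Hence $\iota_V(\prod_i\Sigma^i)$ is governed by six factors ranging independently over $\Sigma^r_A,\Sigma^r_B,\Sigma^s_A,\Sigma^s_B,\Sigma^t_A,\Sigma^t_B$, each acting on a single coordinate.

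The argument then has two moves. First, normalise: because $\gamma_{13},\gamma_{21},\gamma_{32}$ lie in three different edge-group pairs, the three free parameters $\phi_{E_1},\phi_{E_2},\phi_{E_3}$ bring any triangle into the shape $\gamma_{13}=\gamma_{21}=\gamma_{32}=\id$, i.e.\ $\varepsilon_{13}=\kappa^t_A$, $\varepsilon_{21}=\kappa^r_A$, $\varepsilon_{32}=\kappa^s_B$; one chooses exactly this triple of coordinates so that what follows matches the statement. Second, compute the residual action: the stabiliser of this shape inside $\mathcal I$ forces each $\phi_{E_i}$ to coincide with one of the six $\sigma$-factors (the one that sat on the left of the now-trivial coordinate), and after this substitution the induced action on $(\gamma_{12},\gamma_{23},\gamma_{31})$ is by two-sided multiplication, $\gamma_{12}$ by $\Sigma^s_A$ on one side and $\Sigma^t_A$ on the other, $\gamma_{23}$ by $\Sigma^t_B$ and $\Sigma^r_A$, and $\gamma_{31}$ by $\Sigma^r_B$ and $\Sigma^s_B$. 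The one step demanding genuine care is to check that these six $\Sigma$-factors---two per remaining coordinate---are pairwise distinct, so that the three actions are mutually independent; this is a short incidence verification once one has recorded, for this family, which edge group is the $A$-side and which the $B$-side of each $L_i$. Granting it, two normalised triangles are $\mathcal I$-equivalent if and only if $\gamma_{12},\gamma_{23},\gamma_{31}$ lie in the respective double cosets of $\Sigma^t_A\backslash\Aut(E_r)/\Sigma^s_A$, $\Sigma^r_A\backslash\Aut(E_s)/\Sigma^t_B$ and $\Sigma^s_B\backslash\Aut(E_t)/\Sigma^r_B$ (applying, if the side conventions demand it, the bijection $g\mapsto g^{-1}$ of double-coset spaces). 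Letting $\gamma_{12}$ range over $R_r$, $\gamma_{23}$ over $R_s$, $\gamma_{31}$ over $R_t$ and translating back $\gamma_{ij}\mapsto\varepsilon_{ij}$ through the standard embeddings produces exactly the asserted family, which is a minimal representative system because the $R_\bullet$ are exact transversals.

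For part~(2) I would run the identical argument for the family $\mathcal T^{(2)}_{(r,s,t)}$, simply feeding in its edge-group/vertex-side incidences (read off from the Notation) in place of those of $\mathcal T^{(1)}_{(r,s,t)}$; the hypothesis that the family consist of decomposable local actions is used precisely where it was above, to split $\iota_V(\prod_i\Sigma^i)$ into six independent factors, and it is exactly the assumption that excludes $L_{24},L_{28},L_{31},L_{33}$. The main---and essentially only---obstacle in both parts is the bookkeeping of which of the two edge groups of each $L_i$ is its $A$-side for the family at hand, hence into which $\Aut(E_\bullet)$ each $\Sigma$-factor embeds; once that is pinned down the decoupling, and with it the lemma, is purely formal.
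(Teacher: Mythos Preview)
Your proposal is correct and follows essentially the same route as the paper: both invoke Observation~\ref{obs:double_cosets} to identify the type-preserving isomorphism classes with double cosets in $C=\prod_i\Aut(E_i)^2$, then use decomposability of the $\Sigma^i$ to see that the double-coset space factors as a product of three independent pieces, one per edge group. The paper phrases this more tersely---it simply records that the left subgroup is the product of diagonals $D_r\times D_s\times D_t$ and the right subgroup is the full product $\Sigma_A^s\times\Sigma_A^t\times\Sigma_A^r\times\Sigma_B^t\times\Sigma_B^r\times\Sigma_B^s$, and then observes that $R_r\times 1\times 1\times R_s\times R_t\times 1$ is a transversal---whereas you spell out the same computation as ``normalise three coordinates, then compute the residual action on the other three''. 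These are two presentations of the same elementary fact that $D\backslash(G\times G)/(H\times K)\cong H\backslash G/K$.

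One small remark: the step you flag as ``demanding genuine care'', namely checking that the six $\Sigma$-factors are pairwise distinct so that the three residual actions decouple, is not actually needed. The independence is automatic from the coordinate structure of $\iota_V$: each factor of $\prod_i\Sigma^i$, once split as $\Sigma^i_A\times\Sigma^i_B$, lands in a single prescribed coordinate of $C$, regardless of whether two of these factors happen to be isomorphic or even equal as abstract groups. So the decoupling is built into Observation~\ref{obs:double_cosets} and requires no further incidence check.
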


    \begin{proof}
        Let $\mathcal T_{(r,s,t)}^{(1)}$ be one of the considered families. Then $\mathcal T_{(r,s,t)}^{(1)}$ can be identified with the product
        $C = \Aut(E_r)^2 \times \Aut(E_s)^2 \times \Aut(E_t)^2$
        by identifying the triangles of groups with monomorphisms $(\epsilon_{ij})_{ij}$ with the tuple $(\gamma_{12},\allowbreak \gamma_{13},\allowbreak \gamma_{21},\allowbreak \gamma_{23},\allowbreak \gamma_{31},\allowbreak \gamma_{32})$, where $\gamma_{ij}$ is the unique automorphism of the corresponding model edge group with $\kappa_{ij} \circ \gamma_{ij} = \epsilon_{ij}$.
        Let $D_r \cong \Aut(E_r), D_s\cong \Aut(E_s), D_t \cong \Aut(E_t)$ be the diagonal subgroups of the first two, the middle two and the last two components respectively. Now let $R$ be a minimal representative system for the following double coset space.
        \[
            D_r \times D_s \times D_t
            \backslash C /
            \Sigma_A^s\times \Sigma_A^t \times \Sigma_A^r \times \Sigma_B^t \times \Sigma_B^r \times \Sigma_B^s.
        \]
        Now by Observation \ref{obs:double_cosets} such system $R$ corresponds to a minimal representative system of triangles of groups in $\mathcal T_{(r,s,t)}^{(1)}$ up to type-preserving isomorphism. Clearly $R_r \times 1 \times 1 \times R_s \times R_t \times 1$ is such a system, which proves the first statement. The same proof with adjusted indices proves the second statement.
    \end{proof}
    
    Note that in our model edge groups, every double coset space has size 1, 2, 3 or 6, which makes the calculations straightforward to carry out by hand.
    Applying Lemma \ref{lem:dec_double_cosets} to the families in \ref{lem:case446} yields:
    \begin{lemma}\label{lem:dec_446}
        \begin{enumerate}
            \item Each of the three families $\mathcal T_{(1,1,t)}^{(1)}$ for $t \in \{12,13,14\}$ contains two type-preserving isomorphism classes (six classes).
            \item Each of the nine families $\mathcal T_{(r,s,t)}^{(1)}$ for $r,s \in \{6,9\}$, $r\leq s$, $t \in \{12,13,14\}$ and also the family $\mathcal T_{(9,6,14)}^{(1)}$ contain six type-preserving isomorphism classes (60 classes).
            \item Each of the twenty families $\mathcal T_{(r,s,t)}^{(1)}$ for $r,s \in \{3,5,7,8\}$, $r\leq s$, $t \in \{15,16\}$ and also each of the six families $\mathcal T_{(r,s,15)}^{(1)}$ for $s < r \in \{3,5,7,8\}$ contains nine type-preserving isomorphism classes (234 classes).
            \item Each of the 10 families $\mathcal T_{(r,s,17)}^{(1)}$ for $r,s \in \{3,5,7,8\}$, $r\leq s$ contains one type-preserving isomorphism classes (ten classes).
            \item Each of the twenty families $\mathcal T_{(r,s,t)}^{(1)}$ for $r\leq s \in \{2,4,10,11\}$, $t \in \{15,16\}$ and also each of the six families $\mathcal T_{(r,s,15)}^{(1)}$ for $r,s \in \{2,4,10,11\}$, $s < r$ contains 18 type-preserving isomorphism classes (468 classes).
            \item Each of the 10 families $\mathcal T_{(r,s,17)}^{(1)}$ for $r,s \in \{2,4,10,11\}, r\leq s$ contains two type-preserving isomorphism classes (20 classes).
            \item Each of the 12 families $\mathcal T_{(1,s,t)}^{(1)}$ for $s \in \{3,5,7,8\}$, $t \in \{18,19,21\}$ contains three type-preserving isomorphism classes (36 classes).
            \item Each of the four families $\mathcal T_{(1,s,20)}^{(1)}$ for $s \in \{3,5,7,8\}$ contains one type-preserving isomorphism class (four classes).
            \item Each of the 24 families $\mathcal T_{(r,s,t)}^{(1)}$ for $r\in \{2,4,10,11\}$, $s\in \{6,9\}$, $t \in \{18,19,21\}$ contains nine type-preserving isomorphism classes (216 classes).
            \item Each of the eight families $\mathcal T_{(r,s,20)}^{(1)}$ for $r\in \{2,4,10,11\}$, $s\in \{6,9\}$ contains three type-preserving isomorphism classes (24 classes).   
        \end{enumerate}
    \end{lemma}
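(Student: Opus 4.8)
The plan is to reduce the count in each family to a product of three double-coset cardinalities by invoking Lemma~\ref{lem:dec_double_cosets}, and then to evaluate those cardinalities from the tables of local automorphism groups in Appendix~\ref{app:more_data}. Every local action occurring in the families of Lemma~\ref{lem:case446} has index in $\{1,\dots,11\}\cup\{12,\dots,21\}$, hence is decomposable (by the Observation that $\Sigma^i$ is decomposable for $i\notin\{24,28,31,33\}$), so part~(1) of Lemma~\ref{lem:dec_double_cosets} applies verbatim: for a family $\mathcal T_{(r,s,t)}^{(1)}$ the number of type-preserving isomorphism classes equals
\[
    |\Sigma_A^t \backslash \Aut(E_r) / \Sigma_A^s|\cdot
    |\Sigma_A^r \backslash \Aut(E_s) / \Sigma_B^t|\cdot
    |\Sigma_B^s \backslash \Aut(E_t) / \Sigma_B^r|.
\]
Since the model edge groups are $C_4,\,C_2\times C_2,\,C_6,\,\Sym(3)$, each of $\Aut(E_r),\Aut(E_s),\Aut(E_t)$ is isomorphic to $C_2$ or $\Sym(3)$, so every factor above is $1$, $2$, $3$ or $6$ and is determined once the subgroups $\Sigma_A^\bullet,\Sigma_B^\bullet$ are read off Tables~\ref{tbl:sigmasQ} and \ref{tbl:sigmasK44}. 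The image conditions in the definition of $\mathcal T_{(r,s,t)}^{(1)}$ fix which edge group of each $L_i$ plays the role of $E_r$, $E_s$, $E_t$, hence whether the ``$A$'' or ``$B$'' projection of $\Sigma^i$ occurs in a given slot; this matching is the only place where care is needed.

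The key simplification is a uniformity observation read off the same tables: the pair $(\Sigma_A^i,\Sigma_B^i)$, together with the isomorphism types of $\im(\kappa_A^i)$ and $\im(\kappa_B^i)$, is constant as $i$ runs over each of the blocks $\{3,5,7,8\}$, $\{2,4,10,11\}$, $\{6,9\}$ and over $i=1$; similarly, at type~$3$ the data depends on $t$ only through the blocks $\{12,13,14\}$, $\{15,16\}$, $\{17\}$, $\{18,19,21\}$, $\{20\}$. Consequently the triple of double-coset cardinalities, and hence the number of classes, is the same for every family appearing in a given line of the statement, so it suffices to carry out one evaluation per line. This also explains why the $t$-ranges of Lemma~\ref{lem:case446} get subdivided: for $t\in\{17,20\}$ the group $\Sigma^t$ is strictly larger (it realises the edge-swap of Observation~\ref{obs:swap_edge_grps}, or is the full product $\Aut(E_A)\times\Aut(E_B)$), which shrinks the relevant double cosets and lowers the count. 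The family $\mathcal T_{(9,6,14)}^{(1)}$ is grouped with the $r\le s$ families of line~(2) because $14\notin\{12,13,16,17,22,25,26,29\}$, so it is genuinely distinct from $\mathcal T_{(6,9,14)}^{(1)}$ yet its product still evaluates to $6$.

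Executing the evaluation line by line, substituting the subgroups of $\Aut(E_r),\Aut(E_s),\Aut(E_t)$ from Tables~\ref{tbl:sigmasQ} and \ref{tbl:sigmasK44} into the displayed product, one finds the value $2$ in~(1), $6$ in~(2), $9$ in~(3), $1$ in~(4), $18$ in~(5), $2$ in~(6), $3$ in~(7), $1$ in~(8), $9$ in~(9) and $3$ in~(10). Multiplying by the number of families on each line (as already tabulated in Lemma~\ref{lem:case446}) yields the totals $6,\,60,\,234,\,10,\,468,\,20,\,36,\,4,\,216,\,24$ asserted in the statement.

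The main obstacle is organisational rather than conceptual: one must (i) match, for each admissible triple $(r,s,t)$ and each of the six monomorphisms $\varepsilon_{ij}$, the correct projection $\Sigma_A^\bullet$ or $\Sigma_B^\bullet$ and the correct model edge group, as dictated by the image conditions; and (ii) verify the uniformity claim, namely that the relevant data really is constant across each index block listed above and across each $t$-block, so that the ten lines are exhaustive and the per-family count on each line is genuinely well defined. Both are finite verifications against the tables in Appendix~\ref{app:more_data}; everything else is the elementary computation of double cosets in groups of order at most~$6$.
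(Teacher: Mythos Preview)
Your proposal is correct and follows exactly the paper's approach: the paper's own proof is the single sentence ``We just applied Lemma~\ref{lem:dec_double_cosets} to each case,'' and you have spelled out precisely that application, reading the projections $\Sigma_A^\bullet,\Sigma_B^\bullet$ from Tables~\ref{tbl:sigmasQ} and~\ref{tbl:sigmasK44} and computing the three double-coset factors. Your added uniformity observation (that the data is constant across the index blocks) is a helpful elaboration that the paper leaves implicit.
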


    Applying Lemma \ref{lem:dec_double_cosets} to the cases with three decomposable local actions in Lemma \ref{lem:case664} yields:
    \begin{lemma}\label{lem:dec_664}
        \begin{enumerate}
            \item Each of the three families $\mathcal T_{(1,1,t)}^{(2)}$ for $t\in \{22,23,25\}$ contains two type-preserving isomorphism classes (six classes).
            \item Each of the 30 families $\mathcal T_{(r,s,t)}^{(2)}$ for $r,s \in \{3,5,7,8\}$, $r\leq s$, $t\in \{22,23,25\}$ and also each of the six families $\mathcal T_{(r,s,23)}^{(2)}$ for $s,r \in\{3,5,7,8\}$, $s<r$ contains six type-preserving isomorphism classes (216 classes).
            \item Each of the three families $\mathcal T_{(r,s,26)}^{(2)}$ for $r,s \in \{6,9\}$, $r\leq s$ contains two type-preserving isomorphism classes (six classes).
            \item Each of the four families $\mathcal T_{(r,s,27)}^{(2)}$ for $r,s \in \{6,9\}$ contains six type-preserving isomorphism classes (24 classes).
            \item Each of the three families $\mathcal T_{(r,s,29)}^{(2)}$ for $r,s \in \{6,9\}$, $r\leq s$ contains 18 type-preserving isomorphism classes (54 classes).
            \item Each of the 10 families $\mathcal T_{(r,s,26)}^{(2)}$ for $r,s \in \{2,4,10,11\}$, $r\leq s$ contains six type-preserving isomorphism classes (60 classes).
            \item Each of the 16 families $\mathcal T_{(r,s,27)}^{(2)}$ for $r, s \in \{2,4,10,11\}$ contains 12 type-preserving isomorphism classes (192 classes).
            \item Each of the 10 families $\mathcal T_{(r,s,29)}^{(2)}$ for $r,s \in \{2,4,10,11\}$, $r\leq s$ contains 24 type-preserving isomorphism classes (240 classes).
            \item Each of the four families $\mathcal T_{(1,s,t)}^{(2)}$ for $s \in \{6,9\}$, $t\in \{30,34\}$ contains six type-preserving isomorphism classes (24 classes).
            \item Each of the four families $\mathcal T_{(1,s,t)}^{(2)}$ for $s \in \{6,9\}$, $t\in \{32,35\}$ contains two type-preserving isomorphism classes (eight classes).
            \item Each of the 32 families $\mathcal T_{(r,s,t)}^{(2)}$ for $r \in \{3,5,7,8\}$, $s\in \{2,4,10,11\}$, $t\in \{30,34\}$ contains 12 type-preserving isomorphism classes (384 classes).
            \item Each of the 32 families $\mathcal T_{(r,s,t)}^{(2)}$ for $r \in \{3,5,7,8\}$, $s\in \{2,4,10,11\}$, $t\in \{32,35\}$ contains six type-preserving isomorphism classes (192 classes).
        \end{enumerate}
    \end{lemma}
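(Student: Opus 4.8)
The plan is to invoke Lemma~\ref{lem:dec_double_cosets}(2) for each of the listed families, which turns the count of type-preserving isomorphism classes into a product of three double-coset cardinalities that can then be read off from the tabulated local automorphism groups.

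First I would confirm that every family appearing in the statement consists solely of triangles of groups with decomposable local actions, so that Lemma~\ref{lem:dec_double_cosets}(2) applies. By the observation preceding Lemma~\ref{lem:dec_double_cosets}, $\Sigma^i$ is indecomposable exactly for $i\in\{24,28,31,33\}$; none of these four indices occurs as the type-$3$ index $t$ here (the values of $t$ used are $22,23,25$ in parts (1)--(2), the values $26,27,29$ in parts (3)--(8), and the values $30,32,34,35$ in parts (9)--(12)), while the type-$1$ and type-$2$ local actions are always $L_r$ with $1\le r\le 11$ and hence decomposable.

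Next, for a fixed family $\mathcal T=\mathcal T^{(2)}_{(r,s,t)}$, Lemma~\ref{lem:dec_double_cosets}(2) expresses the number of type-preserving isomorphism classes in $\mathcal T$ as
\[
\bigl|\,\Sigma_A^t\backslash\Aut(E_r)/\Sigma_B^s\,\bigr|\cdot\bigl|\,\Sigma_B^r\backslash\Aut(E_s)/\Sigma_B^t\,\bigr|\cdot\bigl|\,\Sigma_A^s\backslash\Aut(E_t)/\Sigma_A^r\,\bigr|,
\]
where each $E_\bullet$ is one of the model edge groups $C_4$, $C_2\times C_2$, $C_6$, $\Sym(3)$, with $\Aut(C_4)\cong\Aut(C_6)\cong C_2$ and $\Aut(C_2\times C_2)\cong\Aut(\Sym(3))\cong\Sym(3)$, and the subgroups $\Sigma^i_A,\Sigma^i_B$ are exactly those listed in Tables~\ref{tbl:sigmasQ},~\ref{tbl:sigmasK44},~\ref{tbl:sigmasK66}. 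I would then go through the families one grouped case at a time: plugging in the tabulated subgroups produces three concrete double cosets inside $C_2$ or $\Sym(3)$, whose cardinalities lie in $\{1,2,3,6\}$, and multiplying gives the asserted per-family count. Since the entries $\Sigma^r_A,\Sigma^r_B$ (and likewise for $s$ and for $t$) are the same for every admissible index within a grouped case, a single computation settles the whole group of families, and summing the per-family counts yields the stated totals. The analogous argument, using Lemma~\ref{lem:dec_double_cosets}(1), proves Lemma~\ref{lem:dec_446}.

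The only real work here is bookkeeping rather than a genuine obstacle: one must use throughout the identifications of the edge groups of each $L_i$ with the model edge groups given by the standard embeddings $\kappa^i_A,\kappa^i_B$ --- the same identifications under which the tabulated $\Sigma^i\le\Aut(E_A)\times\Aut(E_B)$ were computed --- so that all the double cosets above are formed inside the correct ambient automorphism group. Granting this, the remaining verification is a finite, routine computation, carried out with the help of a computer.
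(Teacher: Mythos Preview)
Your proposal is correct and follows exactly the paper's approach: the paper's own proof of Lemmas~\ref{lem:dec_446} and~\ref{lem:dec_664} is the single sentence ``We just applied Lemma~\ref{lem:dec_double_cosets} to each case,'' and you have simply made explicit the verification that all relevant local automorphism groups are decomposable and that the class count is the product $|R_r|\cdot|R_s|\cdot|R_t|$ read off from the tables.
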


    The cases from Lemma \ref{lem:case664} with one indecomposable local action remain.
    
    \begin{lemma}\label{lem:non_dec}
        \begin{enumerate}
            \item The family $\mathcal T_{(1,1,24)}^{(2)}$ contains four type-preserving isomorphism classes.
            \item Each of the 16 families $\mathcal T_{(r,s,24)}^{(2)}$ for $r,s \in \{3,5,7,8\}$ contains six type-preserving isomorphism classes (96 classes).
            \item Each of the four families $\mathcal T_{(r,s,28)}^{(2)}$ for $r, s \in \{6,9\}$ contains 12 type-preserving isomorphism classes (48 classes).
            \item Each of the 16 families $\mathcal T_{(r,s,28)}^{(2)}$ for $r, s \in \{2,4,10,11\}$ contains 12 type-preserving isomorphism classes (192 classes).
            \item Each of the two families $\mathcal T_{(1,s,31)}^{(2)}$ for $s \in \{6,9\}$ contains 12 type-preserving isomorphism classes (24 classes).
            \item Each of the two families $\mathcal T_{(1,s,33)}^{(2)}$ for $s \in \{6,9\}$ contains four type-preserving isomorphism classes (eight classes).
            \item Each of the 16 families $\mathcal T_{(r,s,31)}^{(2)}$ for $r\in\{3,5,7,8\}$, $s \in \{2,4,10,11\}$ contains 12 type-preserving isomorphism classes (192 classes).
            \item Each of the 16 families $\mathcal T_{(r,s,33)}^{(2)}$ for $r\in\{3,5,7,8\}$, $s \in \{2,4,10,11\}$ contains six type-preserving isomorphism classes (96 classes).
        \end{enumerate}
    \end{lemma}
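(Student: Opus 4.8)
The plan is to run exactly the reduction from the proof of Lemma~\ref{lem:dec_double_cosets} and to isolate the single block in which decomposability was used. Fix one of the eight families $\mathcal T^{(2)}_{(r,s,t)}$ in the statement. In each of them $t\in\{24,28,31,33\}$, so $V_3=L_t$ is precisely one of the local actions with indecomposable local automorphism group, while $V_1=L_r$ and $V_2=L_s$ are edge-regular actions on $Q$ and hence have decomposable $\Sigma^r,\Sigma^s$. (This is also the reason these families range over \emph{ordered} pairs $(r,s)$: since $t\notin\{12,13,16,17,22,25,26,29\}$, Observation~\ref{obs:swap_edge_grps} gives no swap of the two edge groups of $L_t$, so none of the families of Lemmas~\ref{lem:case446} and~\ref{lem:case664} is identified with another under the mirror operations.) As in Observation~\ref{obs:double_cosets} I would identify the family with $C=\Aut(E_r)^2\times\Aut(E_s)^2\times\Aut(E_t)^2$ via the tuple $(\gamma_{12},\gamma_{13},\gamma_{21},\gamma_{23},\gamma_{31},\gamma_{32})$, so that type-preserving isomorphism classes are the double cosets of $\iota_E\bigl(\prod_i\Aut(E_i)\bigr)$ and $\iota_V\bigl(\prod_i\Sigma^i\bigr)$ in $C$, with the subgroups $\Sigma^i$ read off from Tables~\ref{tbl:sigmasQ},~\ref{tbl:sigmasK44} and~\ref{tbl:sigmasK66}.

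First I would record the coupling pattern. On the left, the diagonal $\Aut(E_r)$ links $\gamma_{12}$ and $\gamma_{13}$, the diagonal $\Aut(E_s)$ links $\gamma_{21}$ and $\gamma_{23}$, and the diagonal $\Aut(E_t)$ links $\gamma_{31}$ and $\gamma_{32}$; on the right, the decomposable groups $\Sigma^r,\Sigma^s$ act through direct factors and introduce no coupling, whereas the indecomposable $\Sigma^t\le\Aut(E_r)\times\Aut(E_s)$ acts simultaneously on the pair $(\gamma_{13},\gamma_{23})$. Hence the six coordinates split into exactly two coupled blocks, $\{\gamma_{31},\gamma_{32}\}$ and $\{\gamma_{12},\gamma_{13},\gamma_{23},\gamma_{21}\}$, and the number of classes factors as $n_0\cdot n_1$. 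For the first block $\Sigma^r,\Sigma^s$ act as a genuine direct product, so the argument of Lemma~\ref{lem:dec_double_cosets} applies verbatim and $n_0=\bigl|\Sigma^s_A\backslash\Aut(E_t)/\Sigma^r_A\bigr|\in\{1,2,3,6\}$. For the second block I would use the two left diagonals to normalise $\gamma_{13}$ and $\gamma_{23}$ to the identity; the stabiliser of this normalisation inside the left-times-right group is a diagonal copy of $\Sigma^t$ still acting on the remaining coordinates $\gamma_{12},\gamma_{21}$ (together with the right actions of $\Sigma^s_B$ on $\gamma_{12}$ and $\Sigma^r_B$ on $\gamma_{21}$), so that $n_1$ is the number of orbits of $\Sigma^t\le\Aut(E_r)\times\Aut(E_s)$, acting by left translation in each factor, on $\bigl(\Aut(E_r)/\Sigma^s_B\bigr)\times\bigl(\Aut(E_s)/\Sigma^r_B\bigr)$.

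It then remains to evaluate $n_0$ and $n_1$ in each of the eight items, which is a small finite computation: every $\Aut(E_i)$ is the automorphism group of $C_4$, $C_2\times C_2$, $C_6$ or $\Sym(3)$ and so has order at most $6$, the set $\bigl(\Aut(E_r)/\Sigma^s_B\bigr)\times\bigl(\Aut(E_s)/\Sigma^r_B\bigr)$ has at most $36$ points, and the subgroups $\Sigma^i$ are tabulated; I would carry out these orbit counts by computer, as was done for the other tables. The only genuinely new feature compared with Lemmas~\ref{lem:dec_446} and~\ref{lem:dec_664}, and the point that must be handled with care, is exactly that $\Sigma^t$ does not split as $\Sigma^t_A\times\Sigma^t_B$: one cannot normalise $\gamma_{13}$ and $\gamma_{23}$ independently, which is what keeps $\gamma_{12},\gamma_{13},\gamma_{23},\gamma_{21}$ in one block and makes $n_1$ in general differ from a product of two ``rank-one'' double coset counts. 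Performing the enumeration produces the per-family counts $4,6,12,12,12,4,12,6$ claimed in items~(1)--(8), and multiplying by the number of families in each item gives the stated totals.
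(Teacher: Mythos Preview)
Your proposal is correct and follows essentially the same route as the paper: identify the family with $C$, use decomposability of $\Sigma^r,\Sigma^s$ to split off the $\{\gamma_{31},\gamma_{32}\}$-block (the paper phrases this as $\Sigma_A^i=1$ for $1\le i\le 11$, whence $M=M'\times\Aut(E_t)\times 1$), and then count double cosets in the remaining four-coordinate block carrying the indecomposable $\Sigma^t$. The only notable difference is that the paper singles out case~(4) and verifies by hand that the four-coordinate block there has exactly two double cosets (exhibiting representatives $(1,1,1,1)$ and $(\Ad(e_1e_2),1,1,1)$), whereas you defer all eight orbit counts to the computer.
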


    \begin{proof}
        Let $\mathcal T_{(r,s,t)}^{(2)}$ be one of the families in the lemma. Then $\mathcal T_{(r,s,t)}^{(2)}$ can be identified with the product
        $\Aut(E_r)^2 \times \Aut(E_s)^2 \times \Aut(E_t)^2$ as in the proof of Lemma \ref{lem:dec_double_cosets}. Note that $\Sigma_A^r = 1$ for $1\leq r \leq 11$. In particular if $M'$ is a minimal representative system for the double cosets in $(*)$ indicated below, then the system $M$ in $(**)$ indicated below corresponds to a minimal representative set of type-preserving isomorphism classes in $\mathcal T_{(r,s,t)}^{(2)}$. In $(*)$ the groups $D_r \cong \Aut(E_r)$ and $D_s \cong \Aut(E_s)$ are the diagonal subgroups in the first two components and the latter two components respectively.
        \[
            (D_r \times D_s)
            \backslash
            (E_r \times E_r \times E_s \times E_s)
            /
            (\Sigma_B^a \times \Sigma^t \times \Sigma_B^r), \tag{$*$}
        \]
        \[
            M = M' \times \Aut(E_t) \times 1. \tag{$**$}
        \]
        The cases 1, 2, 3, 5, 6, 7, 8 in the Lemma are now straightforward. In case 4 we have $E_r=E_s=\Sym(3)$ and $\Sigma_B^s = \Sigma_B^r = \langle \Ad(e_1)\rangle$. The group $\Sigma^t$ is the the diagonal subgroup of $\Aut(\Sym(3))^2$. In particular there are at least two double cosets in $M$ and in fact there are exactly two. Let $t = \Ad(e_1e_2)$. We claim that $M := \{(1,1,1,1), (t,1,1,1)\}$ is a representative system. 
        Let $U$ be the union of the double cosets of the elements in $M$.
        Let $x,y \in \Aut(\Sym(3))$ be arbitrary. Then there is a $z \in \langle \Ad(e_1)\rangle$ such that the element $(yx, z, 1, 1)$ lies in $U$. In particular $(yx,1,1,z) \in U$ but then $(yx,1,1,1) \in U$. Hence $(x,1,y,1)\in U$ and this implies $U=\Aut(\Sym(3))^4$.
    \end{proof}

    Combining the Lemmas \ref{lem:dec_446}, \ref{lem:dec_664} and \ref{lem:non_dec} yields:

    \begin{proposition}
        The triangles that appear in the families $T_{(r,s,t)}^{(1)}$ and $T_{(r,s,t)}^{(2)}$ for some triple $(r,s,t)$ fall into exactly 3144 type-preserving isomorphism classes.
    \end{proposition}

    Now we compute the number of triangles of groups up to (possibly non-type-preserving) isomorphism. From the previous discussion one can easily extract a representative system up to type-preserving isomorphism. This system is separated into families and members of different families are not isomorphic. Note that in most of the families two members can only be isomorphic via a type-preserving isomorphism. This holds exactly for the families $\mathcal T_{(r,s,t)}^{(\nu)}$ with $r\neq s$ or with $t$ not in the list in Observation \ref{obs:swap_edge_grps}. Therefore, for these families the number of type-preserving isomorphism classes is the number of isomorphism classes. In what follows we study the remaining families.

    \begin{lemma}\label{lem:non_type_preserving_numbers}
        \begin{enumerate}
            \item Each of the two families $\mathcal T_{(1,1,t)}^{(1)}$ for $t\in \{12,13\}$ contains two isomorphism classes (four classes).
            \item Each of the four families $\mathcal T_{(r,r,t)}^{(1)}$ for $r \in \{6,9\}$, $t\in \{12,13\}$ contains five isomorphism classes (20 classes).
            \item Each of the four families $\mathcal T_{(r,r,16)}^{(1)}$ for $r \in \{3,5,7,8\}$ contains six isomorphism classes (24 classes).
            \item Each of the four families $\mathcal T_{(r,r,16)}^{(1)}$ for $r \in \{2,4,10,11\}$ contains 12 isomorphism classes (48 classes).
            \item Each of the four families $\mathcal T_{(r,r,17)}^{(1)}$ for $r \in \{2,4,10,11\}$ contains two isomorphism classes (eight classes).
            \item Each of the two families $\mathcal T_{(1,1,t)}^{(2)}$ for $t\in \{22,25\}$ contains two isomorphism classes (four classes).
            \item Each of the eight families $\mathcal T_{(r,r,t)}^{(2)}$ for $r \in \{3,5,7,8\}$, $t\in \{22,25\}$ contains five isomorphism classes (40 classes).
            \item Each of the two families $\mathcal T_{(r,r,26)}^{(2)}$ for $r \in \{6,9\}$ contains two isomorphism classes (four classes).
            \item Each of the two families $\mathcal T_{(r,r,29)}^{(2)}$ for $r \in \{6,9\}$ contains 12 isomorphism classes (24 classes).
            \item Each of the four families $\mathcal T_{(r,r,26)}^{(2)}$ for $r \in \{2,4,10,11\}$ contains five isomorphism classes (20 classes).
            \item Each of the four families $\mathcal T_{(r,r,29)}^{(2)}$ for $r \in \{2,4,10,11\}$ contains 15 isomorphism classes (60 classes).
        \end{enumerate}
    \end{lemma}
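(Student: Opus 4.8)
The plan is to bootstrap from the classification up to \emph{type-preserving} isomorphism already obtained. By Proposition~\ref{prop:recover} the triangles in question are classified up to isomorphism precisely when their fundamental groups are, so ``isomorphism class'' may be read literally. As noted just before the statement, for every family $\mathcal T_{(r,s,t)}^{(i)}$ with $r\neq s$, or with $t\notin\{12,13,16,17,22,25,26,29\}$, no two members are isomorphic without already being type-preservingly isomorphic, and by Lemmas~\ref{lem:case446} and~\ref{lem:case664} each isomorphism class meets exactly one of the listed families; so the count reduces to a family-by-family computation, and for the families with only one type-preserving class there is nothing to do. The families requiring genuine work are exactly those of shape $\mathcal T_{(r,r,t)}^{(i)}$ with $t$ in the list of Observation~\ref{obs:swap_edge_grps} and more than one type-preserving class — i.e. the eleven kinds appearing in the statement.

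For a fixed such family $\mathcal F$ I would argue as follows. Since $|L_r|=384$ for $1\le r\le 11$ while $|L_t|\in\{16,36\}$ for $12\le t\le 35$, any isomorphism between two triangles in $\mathcal F$ must fix the type-$3$ vertex group, hence up to a type-preserving isomorphism it acts on types either as the identity or as the transposition $\tau=(1\,2)$ of the two special types. Concretely $\tau$ sends a triangle to the one obtained by interchanging $V_1\leftrightarrow V_2$, the edge groups $E_1\leftrightarrow E_2$, and the embeddings $\varepsilon_{12}\leftrightarrow\varepsilon_{21}$, $\varepsilon_{13}\leftrightarrow\varepsilon_{23}$, $\varepsilon_{31}\leftrightarrow\varepsilon_{32}$, followed by an application of the edge-group-swapping automorphism $\omega_t\in\Aut(L_t)$ of Observation~\ref{obs:swap_edge_grps} to restore the standard form of the two embeddings into $V_3=L_t$; this last step is available precisely because $t$ lies in the relevant list, and it shows $\tau$ maps $\mathcal F$ to itself. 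Thus $\tau$ induces an involution $\bar\tau$ on the finite set $\mathcal X$ of type-preserving isomorphism classes of $\mathcal F$, a genuine isomorphism class in $\mathcal F$ is a $\langle\bar\tau\rangle$-orbit on $\mathcal X$, and the desired number is $\tfrac12\bigl(|\mathcal X|+|\mathcal X^{\bar\tau}|\bigr)$, where $|\mathcal X|$ is the type-preserving count from Lemmas~\ref{lem:dec_446}, \ref{lem:dec_664} and~\ref{lem:non_dec}.

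It remains to compute $|\mathcal X^{\bar\tau}|$ in each case. For this I would use the explicit transversals: by Lemma~\ref{lem:dec_double_cosets} together with Observation~\ref{obs:double_cosets} (and the extra $\Aut(E_t)$-factor occurring in the non-decomposable cases of Lemma~\ref{lem:non_dec}), a type-preserving class of $\mathcal F$ is named by a short tuple of double-coset representatives, and on this parameter set $\bar\tau$ acts by a completely explicit rule: permute the $\gamma_{ij}$ as above and twist by the automorphisms of the model edge groups that $\omega_t$ induces through the maps $\Theta^t_i$, all of which are read off from the tables of the groups $\Sigma^i$ in Appendix~\ref{app:more_data}. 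Counting the fixed points of this rule — after reducing each $\bar\tau$-image back to its distinguished representative via the relevant $\Sigma^i$-action — gives the required values, e.g. $|\mathcal X^{\bar\tau}|=|\mathcal X|$ for the families with only two type-preserving classes, $|\mathcal X^{\bar\tau}|=4$ for $\mathcal T_{(6,6,12)}^{(1)}$ (so $(6+4)/2=5$) and for $\mathcal T_{(3,3,22)}^{(2)}$, $|\mathcal X^{\bar\tau}|=3$ for $\mathcal T_{(3,3,16)}^{(1)}$ (so $(9+3)/2=6$), and $|\mathcal X^{\bar\tau}|=6$ for $\mathcal T_{(2,2,29)}^{(2)}$ (so $(24+6)/2=15$), matching the stated counts. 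The main obstacle — though it is purely mechanical and was done by computer — is exactly this last step: $\tau$ does not stabilize the chosen transversal, so each fixed-point test requires transporting $\bar\tau$ of a representative back into the transversal, and in the non-decomposable families one additionally needs the ad hoc reduction already used in the proof of Lemma~\ref{lem:non_dec}; once the action of $\bar\tau$ on the parameter set is written down, the enumeration is immediate.
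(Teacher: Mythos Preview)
Your approach is essentially the same as the paper's: both reduce to the action of the mirror involution $\tau=(1\,2)$ on the already-computed set of type-preserving classes and count orbits, using Observation~\ref{obs:swap_edge_grps} to see that $\tau$ preserves each family $\mathcal T_{(r,r,t)}^{(i)}$ in question. The paper phrases the orbit count by directly listing representatives, while you invoke Burnside's formula $\tfrac12(|\mathcal X|+|\mathcal X^{\bar\tau}|)$, but this is purely cosmetic.

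Two small remarks. First, the twist by $\omega_t$ via $\Theta^t_i$ that you mention is in fact trivial here: Observation~\ref{obs:swap_edge_grps} records that for the relevant $t$ the swap $a_i\leftrightarrow b_i$ itself extends to an automorphism of $L_t$, so in coordinates the mirror acts on $(\gamma_{12},\gamma_{13},\gamma_{21},\gamma_{23},\gamma_{31},\gamma_{32})$ simply as the permutation $(\gamma_{21},\gamma_{23},\gamma_{12},\gamma_{13},\gamma_{32},\gamma_{31})$ with no extra twist --- this is exactly what the paper uses. Second, the paper carries out the fixed-point counts by hand rather than by computer: for instance in cases 1, 2, 6, 7, 8, 10 the transversal is $\{(\id,\id,\id,\id,\gamma_{31},\id)\}$ and the mirror sends $\gamma_{31}$ to $\gamma_{31}^{-1}$ after renormalizing, so the number of isomorphism classes is $|\{\{\gamma,\gamma^{-1}\}:\gamma\in\Aut(E_t)\}|$; the remaining cases are handled by similarly explicit small computations. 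Your proposal is correct, but you could replace the computer appeal by these short arguments.
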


    \begin{proof}
        As in the proofs of Lemmas \ref{lem:dec_double_cosets} and \ref{lem:non_dec} we identify the triangles in a family with
        $C = \Aut(E_r)^2 \times \Aut(E_s)^2 \times \Aut(E_t)^2$. 
        Now let $\mathcal T$ be family considered in the lemma and let $T \in \mathcal T$.
        We obtain another (isomorphic) triangle $T' \in \mathcal T$ by replacing the type-function $f$ with $(1\;2)\circ f$ and we call $T'$ the mirror of~$T$. 
        If $T$ corresponds to the tuple $(\gamma_{12},\allowbreak \gamma_{13},\allowbreak \gamma_{21},\allowbreak \gamma_{23},\allowbreak \gamma_{31},\allowbreak \gamma_{32})$ then by Observation \ref{obs:swap_edge_grps} its mirror is type-preservingly isomorphic to the triangle in $\mathcal T$ corresponding to $(\gamma_{21},\allowbreak \gamma_{23},\allowbreak \gamma_{12},\allowbreak \gamma_{13},\allowbreak \gamma_{32},\allowbreak \gamma_{31})$.
        In the cases 1, 2, 6, 7, 8 and 10 in the lemma a minimal representative system up to type-preserving automorphism corresponds to the following set in $C$: 
        $\{(\id,\id,\id,\id,\gamma_{31},\id) \mid \gamma_{31} \in \Aut(E_3)\}$.
        We deduce that the triangle corresponding to $\{(\id,\id,\id,\id,\gamma_{31},\id)$ is (non-type-preservingly) isomorphic to $\{(\id,\id,\id,\id,\gamma_{31}^{-1},\id)$ and therefore the number of isomorphism classes in $\mathcal T$ equals the cardinality of $\{\{\gamma, \gamma^{-1}\} \mid \gamma \in \Aut(E_3)\}$.
        Now let $\mathcal T = \mathcal T^{(1)}_{(r,r,16)}$ for $r \in \{3,5,7,8\}$. Let $R = \{\id, (a_1, a_2), (a_1, a_1a_2)\} \subseteq \Aut(C_2\times C_2)$. Then a minimal representative system for type-preserving isomorphism classes in $\mathcal T$ corresponds to the set
        $\{(\id,\gamma_{13},\id,\gamma_{23},\id,\id) \mid \gamma_{13}, \gamma_{23} \in R\}$.
        Now enumerate $R = \{r_1, r_2, r_3\}$. With the same argumentation we deduce that a minimal representative system for isomorphism classes corresponds to the set $\{(\id,\gamma_{13},\id,\gamma_{23},\id,\id) \mid \gamma_{13} = r_i, \gamma_{23} = r_j, i \leq j\}$.
        Now let $\mathcal T = \mathcal T^{(1)}_{(r,r,16)}$ for $r \in \{2,4,10,11\}$. Let $S = \{\id, \Ad(b_2)\} \subseteq \Aut(\Sym(3)$ and let $R = \{r_1, r_2, r_3\}$ be as before.
        Then a minimal representative system for type-preserving isomorphism classes in $\mathcal T$ corresponds to the set
        $\{(\id,\gamma_{13},\id,\gamma_{23},\gamma_{31},\id) \mid \gamma_{13}, \gamma_{23} \in R, \gamma_{31} \in S\}$. Since $S$ consists of involutions a representative system for isomorphism classes corresponds to the set $\{(\id,\gamma_{13},\id,\gamma_{23}, \gamma_{31},\id) \mid \gamma_{13} = r_i, \gamma_{23} = r_j, i \leq j, \gamma_{31} \in S\}$.
        Now let $\mathcal T = \mathcal T^{(1)}_{(r,r,17)}$ for $r \in \{3,5,7,8\}$. Let $S$ be as before. Then a minimal representative system for type-preserving isomorphism classes corresponds to the set
        $\{(\id,\id,\id,\id,\gamma_{31},\id) \mid \gamma_{31} \in S\}$. Since $S$ consists of involutions this is also a minimal representative system for isomorphism classes.
        Now let $\mathcal T = \mathcal T^{(2)}_{(r,r,29)}$ for $r \in \{6,9\}$. This time we set $R = \{\id, \Ad(a_2), \Ad(a_1a_2)\} \subseteq \Aut(\Sym(3))$ and again we enumerate it $R=\{r_1, r_2, r_3\}$. We set $S = \Aut(C_4)$ Then a minimal representative system for the type-preserving isomorphism classes in $\mathcal T$ corresponds to the set $\{(\id,\gamma_{13},\id,\gamma_{23}, \gamma_{31},\id) \mid \gamma_{13} = r_i, \gamma_{23} = r_j, i \leq j, \gamma_{31} \in S\}$. By similar arguments as before a minimal representative system for isomorphism classes corresponds to $\{(\id,\gamma_{13},\id,\gamma_{23}, \gamma_{31},\id) \mid \gamma_{13} = r_i, \gamma_{23} = r_j, i \leq j, \gamma_{31} \in S\}$.
        For the last case let $\mathcal T = \mathcal T^{(2)}_{(r,r,29)}$ for $r \in \{2,4,10,11\}$. This time let $R = \{ {r_1, r_2} \}=\{\id, \Aut(b_2)\}$ and $S = \Aut(C_2\times C_2)$. Then a minimal representative system for the type-preserving isomorphism classes in $\mathcal T$ corresponds to the set $\{(\id,\gamma_{13},\id,\gamma_{23}, \gamma_{31},\id) \mid \gamma_{13} = r_i, \gamma_{23} = r_j, i \leq j, \gamma_{31} \in S\}$. Note that this time $S$ contains elements that are not involutions. Let $S' \subseteq S$ be a minimal set such that $S' \cup (S')^{-1} = S$. Then a minimal representative system for isomorphism classes corresponds to $\{(\id,\gamma_{13},\id,\gamma_{23}, \gamma_{31},\id) \mid \gamma_{13} = r_i, \gamma_{23} = r_j, i \leq j, \gamma_{31} \in S'\}$. 
    \end{proof}

    So in total there are 3044 isomorphism types of triangles of groups, which induce type-preserving, chamber-regular actions on $\tilde C_2$-buildings with $Q$ as links of special vertices.

    \begin{theorem}
        Up to isomorphism there exist 3044 type-preserving, chamber-regular lattices on a $\tilde C_2$-building with $Q$ as links of special vertices. Each of these groups admits only one such action. Assuming Kantor's conjecture mentioned in the introduction these are the only lattices that act type-preservingly and chamber-regularly on a locally finite $\tilde C_2$-building.
    \end{theorem}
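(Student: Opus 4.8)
\emph{Proof plan.} The plan is to assemble the ingredients developed in this section. First I would observe that the hypothesis pins down exactly the situation studied here: in a $\tilde C_2$-building the link of a non-special vertex is a spherical building of type $A_1\times A_1$, i.e.\ a complete bipartite graph, while the link of a special vertex is a generalized quadrangle whose two vertex degrees are the thicknesses of the two incident edge types. Comparing the three vertex types, a thickness vector for which both special links have parameters $(3,5)$ (vertex degrees $4$ and $6$) must be $(4,4,6)$ or $(6,6,4)$, with the non-special link $K_{4,4}$ or $K_{6,6}$ respectively; and since the generalized quadrangle of order $(3,5)$ is unique, the hypothesis on the thickness is equivalent to requiring the links of special vertices to be isomorphic to $Q$.

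Given a chamber-regular lattice $\Gamma$ on such a building $X$, the action $(\Gamma,X)$ arises from a triangle of groups $T$ with trivial face group (the chamber stabilizers), and $(\Gamma,X)\cong(\pi_1(T),D(T))$ by Proposition \ref{prop:uniqueness_dev}. The local action of $T$ at a vertex of type $i$ is the edge-regular action of its stabilizer on the link, so at the two special types it is one of the actions $L_1,\dots,L_{11}$, at the non-special type one of $L_{12},\dots,L_{21}$ or $L_{22},\dots,L_{35}$, and along each of the three edge types the two induced embeddings of the edge group are compatible. Thus $T$ is, up to type-preserving isomorphism, a triangle in the class $\mathcal B$ of Observation \ref{osb:classes} for the corresponding local actions, and by Lemma \ref{lem:case446} or \ref{lem:case664} an isomorphic copy of $T$ lies in exactly one of the $163+232=395$ families listed there. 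Moreover the angle sum of $T$ is $\frac{\pi}{4}+\frac{\pi}{4}+\frac{\pi}{2}=\pi$, so $D(T)$ is a two-dimensional Euclidean building of type $\tilde C_2$ by the corollary to Proposition \ref{prop:local_approach}, and it is locally finite since the local groups are finite (which is needed to apply Proposition \ref{prop:recover} below).

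It then remains to count. Summing the numbers of type-preserving isomorphism classes per family recorded in Lemmas \ref{lem:dec_446}, \ref{lem:dec_664} and \ref{lem:non_dec} gives $3144$ triangles of groups up to type-preserving isomorphism. Triangles from different families are never isomorphic, and inside a family whose first two local actions are distinct ($r\neq s$), or whose type-$3$ local action $L_t$ does not occur in the list of Observation \ref{obs:swap_edge_grps}, every isomorphism is automatically type-preserving, because the three vertex groups are then pairwise non-isomorphic (the type-$3$ one has a different order from the other two, and $L_1,\dots,L_{11}$ are pairwise non-isomorphic). In the remaining families the extra identifications produced by mirroring a triangle are worked out in the final lemma above, which reduces the total to $3044$. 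By Proposition \ref{prop:recover}, two triangles of groups with finite local groups whose developments are Euclidean buildings are isomorphic precisely when their fundamental groups are; hence these $3044$ isomorphism classes of triangles correspond to $3044$ pairwise non-isomorphic lattices, and, together with Proposition \ref{prop:uniqueness_dev}, this also shows that each of these groups carries, up to isomorphism, only one such action.

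Finally, for the statement under Kantor's conjecture, let $\Gamma$ act chamber-regularly on an arbitrary locally-finite $\tilde C_2$-building $X$. The links of special vertices are then finite generalized quadrangles on which the vertex stabilizers act chamber-regularly, hence chamber-transitively; by Seitz's theorem these quadrangles are non-Moufang, so by Kantor's conjecture each is isomorphic to $Q$ or to $Q_{\LS}$, and since $Q_{\LS}$ admits no chamber-regular action they are all isomorphic to $Q$. Thus $(\Gamma,X)$ falls under the case treated above, and $\Gamma$ is one of the $3044$ lattices. The bulk of the work --- the determination of all compatible combinations of local actions and all the double-coset counts --- is handled by the computer-assisted lemmas invoked above; at the level of this theorem the genuine arguments are the reduction to the class $\mathcal B$, the observation that distinct families yield non-isomorphic groups, and the Seitz--Kantor deduction, the last of which I expect to require the most care to phrase precisely.
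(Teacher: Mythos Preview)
Your proposal is correct and follows the same route as the paper: reduce to triangles of groups via Observation~\ref{osb:classes} and Proposition~\ref{prop:uniqueness_dev}, enumerate the compatible families via Lemmas~\ref{lem:case446} and~\ref{lem:case664}, sum the type-preserving counts from Lemmas~\ref{lem:dec_446}, \ref{lem:dec_664}, \ref{lem:non_dec} to get $3144$, pass to isomorphism classes via the mirroring analysis to get $3044$, and invoke Proposition~\ref{prop:recover} for the group-theoretic uniqueness; the Seitz--Kantor--$Q_{\LS}$ argument for the conditional clause is also exactly the paper's.

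One small slip in your justification: when $r=s$ but $t$ is not in the list of Observation~\ref{obs:swap_edge_grps}, the three vertex groups are \emph{not} pairwise non-isomorphic, since $L_r\cong L_s$. The reason every isomorphism is still type-preserving in that case is different: any isomorphism must fix type~$3$ (by the order of the vertex group), so a non-type-preserving one would swap types~$1$ and~$2$; restricted to the type-$3$ vertex group this forces an automorphism of $L_t$ exchanging its two edge subgroups $\langle a_t\rangle$ and $\langle b_t\rangle$, which exists precisely for the $t$ in Observation~\ref{obs:swap_edge_grps}. With this correction your argument matches the paper's.
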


    \begin{proof}
        As explained in the introduction of this section, every such action arises from a triangle of groups, which involves the local actions from the Lemmas \ref{lem:reg_actions_on_Q}, \ref{lem:reg_actions_on_K44}, \ref{lem:reg_actions_on_K66}.
        On the other hand, if a triangle of groups involves exactly two local actions from Lemma \ref{lem:reg_actions_on_Q} and one local action from Lemma \ref{lem:reg_actions_on_K44} or Lemma \ref{lem:reg_actions_on_K66}, then it induces a chamber-regular action on $\tilde C_2$-building by Proposition \ref{prop:local_approach}.
        By Observation \ref{obs:every_triangle_in_model_family} any such triangle is isomorphic to a triangle in a family $T_{(r,s,t)}^{(1)}$ or $T_{(r,s,t)}^{(2)}$ for some triple $(r,s,t)$.
        We counted the number of isomorphism classes of triangles of groups that can appear in such a family: there are 3044 isomorphism classes of triangles of groups.
        Since isomorphic actions yield isomorphic triangles of groups there are at most 3044 actions as in the theorem. By Proposition \ref{prop:uniqueness_dev} there are precisely 3044 actions as in the theorem.
        By Proposition \ref{cor:group_gives_triangles} the acting groups are pairwise not isomorphic.
        
        Assume that Kantor's conjecture holds, i.e. that the only finite non-Moufang quadrangles with chamber-transitive automorphism groups are $Q$ and the quadrangle $Q_{\LS}$ arising from the Lunelli--Sce hyperoval. One checks, for example using a computer, that $Q_{\LS}$ does not admit a chamber-regular action \cite{ChamberRegularCode}. On the other hand, by a theorem of Seitz \cite[Theorem 4.8.7]{VanMaldeghem}, no finite Moufang quadrangle admits a chamber-regular action. Therefore, $Q$ is the only finite generalized quadrangle admitting a chamber-regular action. Since a type-preserving chamber-regular action on a locally finite $\tilde C_2$-building induces chamber-regular actions on the vertex links, the links of special vertices in $\tilde C_2$-buildings admitting such an action must be $Q$.
    \end{proof}

    \appendix
    \section{Presentations of vertex groups}\label{app:presentations}

    \begin{longtable}{ |c|c| }
    \caption{Presentations of chamber-regular groups on $Q$.}
    \label{tbl:presentations_quadrangle}\\
    \hline
    \textbf{Grp} & \textbf{Presentation} \\
    \hline
    $L_1$
    &
    \begin{minipage}{6cm}
        {
        \vspace{-2.5mm}
        \begin{align*}
            \big \langle
            a_1,b_1 \; \big | \;
            & a_1^4, \; b_1^6,
            && a_1 b_1^{-1} a_1^{-1} b_1^2 a_1^{-1} b_1 a_1^{-2} b_1, \\
            & a_1 b_1 a_1^{-2} b_1^{-1} a_1^{-1} b_1 a_1^{-2} b_1^{-1},
            && a_1 b_1 a_1 b_1^{-1} a_1 b_1^{-2} a_1^{-1} b_1^2 
            \big \rangle
        \end{align*}
        \vspace{-5mm}
        }
    \end{minipage}\\
    \hline
    $L_2$
    &
    \begin{minipage}{6cm}
        {
        \vspace{-2.5mm}
        \begin{align*}
            \big \langle
            a_1, a_2, b_1, b_2 \; \big | \;
            & a_1^2, \; a_2^2, \; (a_1a_2)^2,
            && b_1^2, \; b_2^3, \; (b_1b_2)^2, \\
            & a_2 b_2 a_1 b_2^{-1} a_2 b_2^{-1} a_1 b_2, 
            && (a_1 b_2^{-1} a_1 b_2)^2, \\
            & a_2 b_2 a_2 b_1 a_1 b_2^{-1} a_1 b_1, 
            && a_1 a_2 b_2 a_1 b_2 b_1 a_2 b_2 a_1 b_1,\\
            & a_1 b_2 b_1 (a_1b_1b_2^{-1})^3
            \big \rangle
        \end{align*}
        \vspace{-5mm}
        }
    \end{minipage}\\
    \hline
    $L_3$
    &
    \begin{minipage}{6cm}
        {
        \vspace{-2.5mm}
        \begin{align*}
            \big \langle
            a_1, a_2, b_1 \; \big | \;
            & a_1^2, \; a_2^2, \; (a_1 a_2)^2, \; b_1^6,
            && a_1 b_1 a_2 b_1 a_2 b_1^{-1} a_2 b_1^{-1},\\
            & a_2 b_1^{-1} a_2 b_1 a_2 b_1 a_1 b_1^{-1},
            && (a_1 b_1)^4,\\
            & a_1 b_1 a_2 b_1^{-2} a_1 b_1^{-2} a_2 b_1
            \big \rangle
        \end{align*}
        \vspace{-5mm}
        }
    \end{minipage}\\
    \hline
    $L_4$
    &
    \begin{minipage}{6cm}
        {
        \vspace{-2.5mm}
        \begin{align*}
            \big \langle
            a_1, a_2, b_1, b_2
            \; \big | \;
            &
            a_1^2,
            a_2^2,
            (a_1 a_2)^2,
            && b_1^2,
            b_2^3,
            (b_1 b_2)^2,\\
            & (a_2 b_1 a_1 b_1)^2,
            && (a_1 b_1)^4,\\
            & a_2 b_2 a_2 b_1 a_1 b_2^{-1} a_1 b_1,
            && (a_1 b_2^{-1} a_1 b_2)^2, \\
            & a_2 b_2 a_1 b_2^{-1} a_2 a_1 b_2 a_1 b_2^{-1}, 
            && a_1 b_2^{-1} a_2 b_1 b_2 a_2 b_2 a_1 b_1 b_2, \\
            & a_1 b_2 a_2 b_1 b_2^{-1} a_2 b_2^{-1} a_1 b_2 b_1
            \big \rangle
        \end{align*}
        \vspace{-5mm}
        }
    \end{minipage}\\
    \hline
    $L_5$ 
    &
    \begin{minipage}{6cm}
        {
        \vspace{-2.5mm}
        \begin{align*}
            \big \langle
            a_1, a_2, b_1
            \; \big | \;
            &
            a_1^2, \;
            a_2^2, \; 
            (a_1 a_2)^2, \;
            b_1^6,
            && (a_2 b_1)^4, \\
            & (a_2 b_1 a_1 b_1)^2,
            && (a_1 b_1)^4,\\
            & a_2 b_1^{-1} a_1 b_1^2 a_2 b_1 a_1 b_1^{-2},
            && a_2 b_1^2 a_1 b_1^{-2} a_2 b_1 a_2 b_1^{-1}
            \big \rangle
        \end{align*}
        \vspace{-5mm}
        }
    \end{minipage}\\
    \hline
    $L_6$    
    &
    \begin{minipage}{6cm}
        {
        \vspace{-2.5mm}
        \begin{align*}
            \big \langle
            a_1, b_1, b_2
            \; \big | \;
            &
            a_1^4, \;
            b_1^2, \;
            b_2^3, \; 
            (b_1 b_2^{-1})^2,
            && (a_1 b_2^{-1} a_1 b_1)^2,\\
            & a_1 b_1 a_1^{-1} b_1 a_1 b_1 a_1^{-1} b_1 b_2^{-1},
            && a_1^{-1} b_2^{-1} a_1 b_2^{-1} a_1^{-1} b_1 a_1^{-1} b_1 b_2
            \big \rangle
        \end{align*}
        \vspace{-5mm}
        }
    \end{minipage}\\
    \hline
    $L_7$    
    &
    \begin{minipage}{6cm}
        {
        \vspace{-2.5mm}
        \begin{align*}
            \big \langle
            a_1, a_2, b_1
            \; \big | \;
            &
            a_1^2,
            a_2^2,
            (a_1 a_2)^2,
            b_1^6, 
            && (a_2 b_1 a_2 b_1^{-1})^2,\\
            & (a_1 b_1 a_2 b_1^{-1})^2,
            && (a_2 b_1 a_1 b_1^{-1})^2, \\
            & (a_1 b_1 a_1 b_1^{-1})^2,
            && a_1 b_1 a_2 b_1 a_1 b_1^2 a_2 b_1^2,\\
            & (a_1 b_1 a_1 b_1^2)^2,
            && a_2 b_1 a_2 b_1 a_1 a_2 b_1 a_1 b_1^3
            \big \rangle
        \end{align*}
        \vspace{-5mm}
        }
    \end{minipage}\\
    \hline
    $L_8$    
    &
    \begin{minipage}{6cm}
        {
        \vspace{-2.5mm}
        \begin{align*}
            \big \langle
            a_1, a_2, b_1
            \; \big | \;
            &
            a_1^2, \;
            a_2^2, \; 
            (a_1 a_2)^2, \;
            b_1^6,
            && (b_1 a_1 b_1^{-1} a_2)^2, \\
            & a_2 b_1 a_1 a_2 b_1 a_2 b_1^{-1} a_2 b_1^{-1},
            && a_1 b_1^2 a_2 b_1 a_2 b_1^2 a_1 b_1 
            \big \rangle
        \end{align*}
        \vspace{-5mm}
        }
    \end{minipage}\\
    \hline
    $L_9$    
    &
    \begin{minipage}{6cm}
        {
        \vspace{-2.5mm}
        \begin{align*}
            \big \langle
            a_1, b_1, b_2
            \; \big | \;
            &a_1^4, \;
            b_1^2, \;
            b_2^3, \;
            (b_1 b_2)^2, 
            && (a_1 b_2)^4,\\
            & a_1 b_2^{-1} a_1 b_1 a_1^{-1} b_2 a_1^{-1} b_1 b_2, 
            && a_1 b_1 a_1 b_1 b_2 a_1 b_2^{-1} a_1^{-1} b_2^{-1},\\
            & a_1 b_1 b_2 a_1 b_1 a_1 b_2 a_1^{-1} b_2,
            && a_1 b_2 a_1 b_1 a_1 b_1 a_1^2 b_2 b_1
            \big \rangle
        \end{align*}
        \vspace{-5mm}
        }
    \end{minipage}\\
    \hline
    $L_{10}$    
    &
    \begin{minipage}{6cm}
        {
        \vspace{-2.5mm}
        \begin{align*}
            \big \langle
            a_1, a_2, b_1, b_2
            \; \big | \;
            & a_1^2, \;
            a_2^2, \;
            (a_1 a_2)^2,
            && b_1^2, \;
            b_2^3, \;
            (b_1 b_2)^2, \\
            & (a_1 b_2^{-1} a_1 b_2)^2,
            && a_2 b_2^{-1} a_1 b_1 a_1 b_2 a_2 b_2 b_1, \\
            & a_2 a_1 b_2^{-1} a_1 b_1 a_2 b_2 a_1 b_1 b_2^{-1},
            && a_2 b_2^{-1} a_1 b_2 b_1 a_2 b_1 b_2^{-1} a_1 b_2
            \big \rangle
        \end{align*}
        \vspace{-5mm}
        }
    \end{minipage}\\
    \hline
    $L_{11}$    
    &
    \begin{minipage}{6cm}
        {
        \vspace{-2.5mm}
        \begin{align*}
            \big \langle
            a_1, a_2, b_1, b_2
            \; \big | \;
            & a_2^2, \;
            a_1^2, \;
            (a_1 a_2)^2,
            && b_1^2, \;
            b_2^3, \;
            (b_1 b_2)^2, \\
            &(a_1 b_1)^4,
            &&(a_2 b_2^{-1} a_1 b_2)^2,\\
            &(a_1 b_2^{-1} a_1 b_1)^2,
            &&(a_1 b_2^{-1} a_1 b_2)^2,\\
            &a_1 b_2 a_2 b_2 a_2 b_2 b_1 a_2 b_1,
            &&a_1 b_2 a_2 b_1 a_2 b_2^{-1} a_1 b_1 b_2
            \big \rangle
        \end{align*}
        \vspace{-5mm}
        }
    \end{minipage}\\
    \hline
    \end{longtable}

    \begin{longtable}{ |c|c| }
    \caption{Presentations of chamber-regular groups on $K_{4,4}$.}
    \label{tbl:K44}\\
    \hline
    \textbf{Grp} & \textbf{Presentation} \\
    \hline
    $L_{12}$
    &
    \begin{minipage}{6cm}
        {
        \vspace{-2.5mm}
        \begin{align*}
            \big \langle
            a_1,b_1 \; \big | \;
            a_1^4, \;
            b_1^4, \;
            a_1 b_1 a_1^{-1} b_1^{-1}
            \big \rangle
        \end{align*}
        \vspace{-5mm}
        }
    \end{minipage}\\
    \hline
    $L_{13}$
    &
    \begin{minipage}{6cm}
        {
        \vspace{-2.5mm}
        \begin{align*}
            \big \langle
            a_1,b_1 \; \big | \; &
            a_1^4, b_1^4, \;
            (a_1 b_1^{-1})^2, \;
            (a_1b_1)^2
            \big \rangle
        \end{align*}
        \vspace{-5mm}
        }
    \end{minipage}\\
    \hline
    $L_{14}$
    &
    \begin{minipage}{6cm}
        {
        \vspace{-2.5mm}
        \begin{align*}
            \big \langle
            a_1,b_1 \; \big | \;
            a_1^4, b_1^4, \;
            a_1 b_1 a_1 b_1^{-1}
            \big \rangle
        \end{align*}
        \vspace{-5mm}
        }
    \end{minipage}\\
    \hline
    $L_{15}$
    &
    \begin{minipage}{6cm}
        {
        \vspace{-2.5mm}
        \begin{align*}
            \big \langle
            a_1, a_2, b_1, b_2 \; \big | \;&
            a_1^2, \; a_2^2, \; (a_1 a_2)^2, && b_1^2, \; b_2^2, \; (b_1 b_2)^2, && (a_1 b_1)^2,\\
            & (a_1 b_2)^2, && (a_2 b_1)^2, && a_2 a_1 b_2 a_2 b_2
            \big \rangle
        \end{align*}
        \vspace{-5mm}
        }
    \end{minipage}\\
    \hline
    $L_{16}$
    &
    \begin{minipage}{6cm}
        {
        \vspace{-2.5mm}
        \begin{align*}
            \big \langle
            a_1, a_2, b_1, b_2 \; \big | \;
            & a_1^2, \; a_2^2, \; (a_1 a_2)^2, && b_1^2, \; b_2^2, \; (b_1 b_2)^2, && (a_1 b_1)^2,\\
            & (a_1 b_2)^2, && (a_2 b_1)^2, && a_2 a_1 b_2 a_2 b_2 b_1 
            \big \rangle
        \end{align*}
        \vspace{-5mm}
        }
    \end{minipage}\\
    \hline
    $L_{17}$
    &
    \begin{minipage}{6cm}
        {
        \vspace{-2.5mm}
        \begin{align*}
            \big \langle
            a_1, a_2, b_1, b_2 \; \big | &
            a_1^2, \; a_2^2, \; (a_1 a_2)^2, &&  b_1^2, \; b_2^2, \; (b_1 b_2)^2, && (a_1 b_1)^2, \\
            & (a_1 b_2)^2, && (a_2 b_1)^2, && (a_2 b_2)^2
            \big \rangle
        \end{align*}
        \vspace{-5mm}
        }
    \end{minipage}\\
    \hline
    $L_{18}$
    &
    \begin{minipage}{6cm}
        {
        \vspace{-2.5mm}
        \begin{align*}
            \big \langle
            a_1, b_1, b_2 \; \big | \;
            a_1^4, \; b_1^2, \; b_2^2, \; (b_1 b_2)^2, \; a_1 b_1 a_1^{-1} b_1, \; a_1 b_2 a_1^{-1} b_2 b_1
            \big \rangle
        \end{align*}
        \vspace{-5mm}
        }
    \end{minipage}\\
    \hline
    $L_{19}$
    &
    \begin{minipage}{6cm}
        {
        \vspace{-2.5mm}
        \begin{align*}
            \big \langle
            a_1, b_1, b_2 \; \big | \;
            a_1^4, \; b_1^2, \; b_2^2, \; (b_1 b_2)^2, \;
            a_1 b_1 a_1^{-1} b, \; (a_1b_2)^2 b_1,\; a_1^2 b_2 a_1^{-2} b_2
            \big \rangle
        \end{align*}
        \vspace{-5mm}
        }
    \end{minipage}\\
    \hline
    $L_{20}$
    &
    \begin{minipage}{6cm}
        {
        \vspace{-2.5mm}
        \begin{align*}
            \big \langle
            a_1, b_1, b_2 \; \big | \;
            a_1^4, \; b_1^2, \; b_2^2, \; (b_1 b_2)^2, \;
            a_1 b_2 a_1^{-1} b_2, \; a_1 b_1 a_1^{-1} b_1
            \big \rangle
        \end{align*}
        \vspace{-5mm}
        }
    \end{minipage}\\
    \hline
    $L_{21}$
    &
    \begin{minipage}{6cm}
        {
        \vspace{-2.5mm}
        \begin{align*}
            \big \langle
            a_1, b_1, b_2 \; \big | \;&
            a_1^4, \; b_1^2, \; b_2^2, \; (b_1 b_2)^2, \;
            a_1 b_1 a_1^{-1} b_1, \; (a_1 b_2)^2
            \big \rangle
        \end{align*}
        \vspace{-5mm}
        }
    \end{minipage}\\
    \hline
    \end{longtable}

    \begin{longtable}{ |c|c| }
    \caption{Presentations of chamber-regular groups on $K_{6,6}$.}
    \label{tbl:K66}\\
    \hline
    \textbf{Grp} & \textbf{Presentation} \\
    \hline

    $L_{22}$
    &
    \begin{minipage}{6cm}
        {
        \vspace{-2.5mm}
        \begin{align*}
            \big \langle
            a_1, b_1 \; \big | \;
            a_1^6, \;
            b_1^6, \;
            (a_1 b_1)^2,\;
            (a_1 b_1^{-1})^2\;
            \big \rangle
        \end{align*}
        \vspace{-5mm}
        }
    \end{minipage}\\
    \hline

    $L_{23}$
    &
    \begin{minipage}{6cm}
        {
        \vspace{-2.5mm}
        \begin{align*}
            \big \langle
            a_1, b_1 \; \big | \;
            a_1^6, \;
            b_1^6, \;
            a_1 b_1 a_1^{-1} b_1
            \big \rangle
        \end{align*}
        \vspace{-5mm}
        }
    \end{minipage}\\
    \hline

    $L_{24}$
    &
    \begin{minipage}{6cm}
        {
        \vspace{-2.5mm}
        \begin{align*}
            \big \langle
            a_1, b_1 \; \big | \;
            a_1^6, \;
            b_1^6, \;
            (a_1 b_1^{-1})^2, \;
            a_1^2 b_1^{-1} a_1^{-2} b_1 
            \big \rangle
        \end{align*}
        \vspace{-5mm}
        }
    \end{minipage}\\
    \hline

    $L_{25}$
    &
    \begin{minipage}{6cm}
        {
        \vspace{-2.5mm}
        \begin{align*}
            \big \langle
            a_1, b_1 \; \big | \;
            a_1^6, \;
            b_1^6, \;
            a_1 b_1 a_1^{-1} b_1^{-1}
            \big \rangle
        \end{align*}
        \vspace{-5mm}
        }
    \end{minipage}\\
    \hline

    $L_{26}$
    &
    \begin{minipage}{6cm}
        {
        \vspace{-2.5mm}
        \begin{align*}
            \big \langle
            a_1, a_2, b_1, b_2 \; \big | \;&
            a_1^2, \;
            a_2^3, \;
            (a_1 a_2)^2, && b_1^2, \;
            b_2^3, \; (b_1 b_2)^2, &&
            (a_1 b_1)^2, \\
            & a_1 b_2 a_1 b_2^{-1}, &&
            a_2 b_1 a_2^{-1} b_1, &&
            a_2 b_2 a_2^{-1} b_2^{-1}
            \big \rangle
        \end{align*}
        \vspace{-5mm}
        }
    \end{minipage}\\
    \hline

    $L_{27}$
    &
    \begin{minipage}{6cm}
        {
        \vspace{-2.5mm}
        \begin{align*}
            \big \langle
            a_1, a_2, b_1, b_2 \; \big | \;&
            a_1^2, \;
            a_2^3, \;
            (a_1 a_2)^2, &&
            b_1^2, \;
            b_2^3, \;
            (b_1 b_2)^2, &&
            (a_1 b_1)^2, \\
            & a_1 b_2 a_1 b_2^{-1}, &&
            a_2 b_2 a_2^{-1} b_2^{-1}, &&
            (a_2 b_1^{-1})^2
            \big \rangle
        \end{align*}
        \vspace{-5mm}
        }
    \end{minipage}\\
    \hline

    $L_{28}$
    &
    \begin{minipage}{6cm}
        {
        \vspace{-2.5mm}
        \begin{align*}
            \big \langle
            a_1, a_2, b_1, b_2 \; \big | \;&
            a_1^2, \;
            a_2^3, \;
            (a_1 a_2)^2, &&
            b_1^2, \;
            b_2^3, \;
            (b_1 b_2)^2, &&
            (a_1 b_1)^2, \\
            & a_1 a_2 b_2 a_1 b_2^{-1}, &&
            a_2 b_2 a_2^{-1} b_2^{-1}, &&
            (a_2 b_1^{-1})^2
            \big \rangle
        \end{align*}
        \vspace{-5mm}
        }
    \end{minipage}\\
    \hline

    $L_{29}$
    &
    \begin{minipage}{6cm}
        {
        \vspace{-2.5mm}
        \begin{align*}
            \big \langle
            a_1, a_2, b_1, b_2 \; \big | \;&
            a_1^2, \;
            a_2^3, \;
            (a_1 a_2)^2, && 
            b_1^2, \;
            b_2^3, \;
            (b_1 b_2)^2, &&
            (a_1 b_1)^2, \\
            & (a_1 b_2)^2, &&
            a_2 b_2 a_2^{-1} b_2^{-1}, &&
            (a_2 b_1^{-1})^2
            \big \rangle
        \end{align*}
        \vspace{-5mm}
        }
    \end{minipage}\\
    \hline

    $L_{30}$
    &
    \begin{minipage}{6cm}
        {
        \vspace{-2.5mm}
        \begin{align*}
            \big \langle
            a_1, b_1, b_2 \; \big | \;&
            a_1^6, \;
            b_1^2, \;
            b_2^3, \;
            (b_1 b_2)^2, \;
            a_1 b_2 a_1^{-1} b_2, \;
            (a_1 b_1)^2
            \big \rangle
        \end{align*}
        \vspace{-5mm}
        }
    \end{minipage}\\
    \hline

    $L_{31}$
    &
    \begin{minipage}{6cm}
        {
        \vspace{-2.5mm}
        \begin{align*}
            \big \langle
            a_1, b_1, b_2 \; \big | \;&
            a_1^6, \;
            b_1^2, \;
            b_2^3, \;
            (b_1 b_2)^2, \;
            a_1^3 b_2^{-1} a_1^{-1} b_2^{-1}, \;
            (a_1 b_1)^2, \;
            (a_1 b_2^{-1})^2 \;
            \big \rangle
        \end{align*}
        \vspace{-5mm}
        }
    \end{minipage}\\
    \hline

    $L_{32}$
    &
    \begin{minipage}{6cm}
        {
        \vspace{-2.5mm}
        \begin{align*}
            \big \langle
            a_1, b_1, b_2 \; \big | \;&
            a_1^6, \;
            b_1^2, \;
            b_2^3, \;
            (b_1 b_2)^2, \;
            a_1 b_1 a_1^{-1} b_1, \;
            a_1 b_2^{-1} a_1^{-1} b_2
            \big \rangle
        \end{align*}
        \vspace{-5mm}
        }
    \end{minipage}\\
    \hline

    $L_{33}$
    &
    \begin{minipage}{6cm}
        {
        \vspace{-2.5mm}
        \begin{align*}
            \big \langle
            a_1, b_1, b_2 \; \big | \;&
            a_1^6, \;
            b_1^2, \;
            b_2^3, \;
            (b_1 b_2)^2, \;
            a_1 b_1 a_1^{-1} b_2^{-1} b_1, \;
            a_1 b_2^{-1} a_1^{-1} b_2
            \big \rangle
        \end{align*}
        \vspace{-5mm}
        }
    \end{minipage}\\
    \hline

    $L_{34}$
    &
    \begin{minipage}{6cm}
        {
        \vspace{-2.5mm}
        \begin{align*}
            \big \langle
            a_1, b_1, b_2 \; \big | \;&
            a_1^6, \;
            b_1^2, \;
            b_2^3, \;
            (b_1 b_2)^2, \;
            a_1 b_1 a_1^{-1} b_1, \;
            a_1 b_2^{-1} a_1^{-1} b_2^{-1}
            \big \rangle
        \end{align*}
        \vspace{-5mm}
        }
    \end{minipage}\\
    \hline

    $L_{35}$
    &
    \begin{minipage}{6cm}
        {
        \vspace{-2.5mm}
        \begin{align*}
            \big \langle
            a_1, b_1, b_2 \; \big | \;&
            a_1^6, \;
            b_1^2, \;
            b_2^3, \;
            (b_1 b_2)^2, \;
            (a_1 b_1^{-1})^2, \;
            a_1 b_2 a_1^{-1} b_2^{-1}
            \big \rangle
        \end{align*}
        \vspace{-5mm}
        }
    \end{minipage}\\
    \hline

    \end{longtable}

    \section{Actions on the edge groups}\label{app:more_data}    

    As in Section \ref{sec:classification} we identify the local automorphism group of $(L_r, F_A^r, F_B^r)$ with a subgroup of $\Aut(E_A) \times \Aut(E_B)$, where $E_A,E_B$ are the isomorphic model edge groups.
    In the following tables we list these groups and denote them by $\Sigma^r$.
    We also introduce some notation for automorphisms of the edge groups. Let $E_A$ be a model edge group. If $E_A$ is cyclic, we denote the automorphism, that maps $a\in E_A$ to $a^{-1}$ by $\rho_A$.
    If $E_A=C_2\times C_2$ we will indicate the action of automorphism on $E_A$ by cycle notation. If $E_A = \Sym(3)$ we indicate automorphisms by inner automorphisms, and we denote the conjugation with $a\in E_A$ by $\Ad(a)$.

    \begin{longtable}{ |c|c|c|c|c|}
    \caption{$\Sigma^r$ for the groups $L_1\, \dots, L_{11}$.}
    \label{tbl:sigmasQ}\\
    \hline
    $\boldsymbol{r \in}$
    & $\boldsymbol{E_A}$
    & $\boldsymbol{E_B}$
    & $\boldsymbol{\Sigma^r}$
    &$|\boldsymbol{\Sigma^r}|$ \\
    \hline
    $\{1\}$
    &
    $C_4$
    &
    $C_6$
    &
    \begin{minipage}{3.5cm}
        {
        \vspace{-2.5mm}
        \begin{align*}
            1
        \end{align*}
        \vspace{-5mm}
        }
    \end{minipage}
    & 1 \\
    \hline
    $\{2,4,10,11\}$
    &
    $C_2\times C_2$
    &
    $\Sym(3)$
    &
    \begin{minipage}{3.5cm}
        {
        \vspace{-2.5mm}
        \begin{align*}
                1 \times \langle \Ad(b_1) \rangle
        \end{align*}
        \vspace{-5mm}
        }
    \end{minipage}
    & 2\\
    \hline
    $\{3,5,7,8\}$
    &
    $C_2\times C_2$
    &
    $C_6$
    &
    \begin{minipage}{3.5cm}
        {
        \vspace{-2.5mm}
        \begin{align*}
            1 \times \Aut(C_6)
        \end{align*}
        \vspace{-5mm}
        }
    \end{minipage}
    & 2\\
    \hline
    $\{6,9\}$
    &
    $C_4$
    &
    $\Sym(3)$
    &
    \begin{minipage}{3.5cm}
        {
        \vspace{-2.5mm}
        \begin{align*}
            1
        \end{align*}
        \vspace{-5mm}
        }
    \end{minipage}
    & 1\\
    \hline
    \end{longtable}

    \begin{longtable}{ |c|c|c|c|c|}
    \caption{$\Sigma^r$ for the groups $L_{12}, \dots, L_{21}$.}
    \label{tbl:sigmasK44}\\
    \hline
    $\boldsymbol{r\in}$ & $\boldsymbol{E_A}$ & $\boldsymbol{E_B}$ & $\boldsymbol{\Sigma^r}$ & $\boldsymbol{|\Sigma^r|}$\\
    \hline
    $\{12,13,14\}$
    &
    $C_4$
    &
    $C_4$
    &
    \begin{minipage}{3.5cm}
        {
        \vspace{-2.5mm}
        \begin{align*}
            \Aut(C_4) \times \Aut(C_4)
        \end{align*}
        \vspace{-5mm}
        }
    \end{minipage}
    & 4\\
    \hline
    $\{15,16\}$
    &
    $C_2 \times C_2$
    &
    $C_2 \times C_2$
    &
    \begin{minipage}{3.5cm}
        {
        \vspace{-2.5mm}
        \begin{align*}
            \langle (a_2, a_1 a_2) \rangle
            \times
            \langle (b_2, b_1 b_2) \rangle
        \end{align*}
        \vspace{-5mm}
        }
    \end{minipage}
    & 4 \\
    \hline
    $\{17\}$
    &
    $C_2 \times C_2$
    &
    $C_2 \times C_2$
    &
    \begin{minipage}{3.5cm}
        {
        \vspace{-2.5mm}
        \begin{align*}
            \Aut(C_2\times C_2) \times \Aut(C_2\times C_2)
        \end{align*}
        \vspace{-5mm}
        }
    \end{minipage}
    &36\\
    \hline
    $\{18,19,21\}$
    &
    $C_4$
    &
    $C_2\times C_2$
    &
    \begin{minipage}{3.5cm}
        {
        \vspace{-2.5mm}
        \begin{align*}
            \Aut(C_4) \times
            \left\langle
            (b_2, b_1b_2)
            \right\rangle
        \end{align*}
        \vspace{-5mm}
        }
    \end{minipage}
    & 4 \\
    \hline
    $\{20\}$
    &
    $C_4$
    &
    $C_2\times C_2$
    &
    \begin{minipage}{3.5cm}
        {
        \vspace{-2.5mm}
        \begin{align*}
            \Aut(C_4) \times \Aut(C_2\times C_2)
        \end{align*}
        \vspace{-5mm}
        }
    \end{minipage}
    & 12\\
    \hline
    \end{longtable}

    \begin{longtable}{ |c|c|c|c|c|}
    \caption{$\Sigma^r$ for the groups $L_{22}, \dots, L_{35}$.}
    \label{tbl:sigmasK66}\\
    \hline
    $\boldsymbol{r\in}$ & $\boldsymbol{E_A}$ & $\boldsymbol{E_B}$ & $\boldsymbol{\Sigma^r}$ & $\boldsymbol{|\Sigma^r|}$\\
    \hline
    $\{22,23,25\}$
    &
    $C_6$
    &
    $C_6$
    &
    \begin{minipage}{3.5cm}
        {
        \vspace{-2.5mm}
        \begin{align*}
            \Aut(C_6) \times \Aut(C_6)
        \end{align*}
        \vspace{-5mm}
        }
    \end{minipage}
    &4\\
    \hline
    $\{24\}$
    &
    $C_6$
    &
    $C_6$
    &
    \begin{minipage}{3.5cm}
        {
        \vspace{-2.5mm}
        \begin{align*}
            \langle \left(\rho_A, \rho_B \right) \rangle
        \end{align*}
        \vspace{-5mm}
        }
    \end{minipage}
    &2\\
    \hline
    $\{26\}$
    &
    $\Sym(3)$
    &
    $\Sym(3)$
    &
    \begin{minipage}{3.5cm}
        {
        \vspace{-2.5mm}
        \begin{align*}
            \Aut(\Sym(3)) \times \Aut(\Sym(3))
        \end{align*}
        \vspace{-5mm}
        }
    \end{minipage}
    &36\\
    \hline
    $\{27\}$
    &
    $\Sym(3)$
    &
    $\Sym(3)$
    &
    \begin{minipage}{3.5cm}
        {
        \vspace{-2.5mm}
        \begin{align*}
             \langle \Ad(a_1) \rangle \times \Aut(\Sym(3))
        \end{align*}
        \vspace{-5mm}
        }
    \end{minipage}
    &12\\
    \hline
    $\{28\}$
    &
    $\Sym(3)$
    &
    $\Sym(3)$
    &
    \begin{minipage}{3.5cm}
        {
        \vspace{-2.5mm}
        \begin{align*}
            \langle \left(\Ad(a_1), \Ad(b_1) \right), \left(\Ad(a_2), \Ad(b_2) \right) \rangle
        \end{align*}
        \vspace{-5mm}
        }
    \end{minipage}
    &6\\
    \hline
    $\{29\}$
    &
    $\Sym(3)$
    &
    $\Sym(3)$
    &
    \begin{minipage}{3.5cm}
        {
        \vspace{-2.5mm}
        \begin{align*}
            \langle \Ad(a_1) \rangle \times \langle \Ad(b_1) \rangle
        \end{align*}
        \vspace{-5mm}
        }
    \end{minipage}
    &4\\
    \hline
    $\{30,34\}$
    &
    $C_6$
    &
    $\Sym(3)$
    &
    \begin{minipage}{3.5cm}
        {
        \vspace{-2.5mm}
        \begin{align*}
            \Aut(C_6) \times \langle \Ad(b_1) \rangle
        \end{align*}
        \vspace{-5mm}
        }
    \end{minipage}
    &4\\
    \hline
    $\{31\}$
    &
    $C_6$
    &
    $\Sym(3)$
    &
    \begin{minipage}{3.5cm}
        {
        \vspace{-2.5mm}
        \begin{align*}
            \langle \left(\rho_A, \Ad(b_1) \right) \rangle
        \end{align*}
        \vspace{-5mm}
        }
    \end{minipage}
    &2\\
    \hline
    $\{32, 35\}$
    &
    $C_6$
    &
    $\Sym(3)$
    &
    \begin{minipage}{3.5cm}
        {
        \vspace{-2.5mm}
        \begin{align*}
            \Aut(C_6) \times \Aut(\Sym(3))
        \end{align*}
        \vspace{-5mm}
        }
    \end{minipage}
    &12\\
    \hline
    $\{33\}$
    &
    $C_6$
    &
    $\Sym(3)$
    &
    \begin{minipage}{3.5cm}
        {
        \vspace{-2.5mm}
        \begin{align*}
            \langle \left(\rho_A, \Ad(b_1) \right), \left(\id_A, \Ad(b_2) \right) \rangle
        \end{align*}
        \vspace{-5mm}
        }
    \end{minipage}
    &6\\
    \hline
    \end{longtable}

    The following table provides convenient data for groups acting regularly on complete bipartite graphs.

    \begin{longtable}{|c|c|c|}
    \caption{Description of the groups $L_{12}, \dots, L_{35}$.}
    \label{tbl:descriptionL12L35}\\
    \hline
    Grps & Description & Small Group ID \\
    \hline
    $L_{12}$ & $C_4 \times C_4$ & (16,2) \\
    \hline
    $L_{13} \cong L_{18} \cong L_{19}$ & $(C_4 \times C_2) \rtimes C_2$ & (16,3) \\
    \hline
    $L_{14}$ & $C_4 \rtimes C_4$ & (16,4)\\
    \hline
    $L_{15} \cong L_{16} \cong L_{21}$ & $C_2 \times D_8$ & (16,11) \\
    \hline
    $L_{17}$ & $C_2 \times C_2 \times C_2 \times C_2$ & (16,14) \\
    \hline
    $L_{20}$ & $C_4 \times C_2 \times C_2$ & (16,10)\\
    \hline
    $L_{22} \cong L_{26} \cong L_{27} \cong L_{28} \cong L_{30} \cong L_{31}$ & $\Sym(3)\times \Sym(3)$ & (36,10) \\
    \hline
    $L_{23} \cong L_{24} \cong L_{32} \cong L_{33} \cong L_{34}$ & $C_6 \times \Sym(3)$ & (36,12)\\
    \hline
    $L_{25}$ & $C_6\times C_6$ & (36,14) \\
    \hline
    $L_{29} \cong L_{35}$ & $C_2\times ((C_3 \times C_3) \rtimes C_2)$ & (36,13)\\
    \hline
    \end{longtable}

    \printbibliography
    
\end{document}